\newtheorem{lemma}{Lemma}[section]
\newtheorem{theorem}[lemma]{Theorem}
\newtheorem{corollary}[lemma]{Corollary}
\newtheorem{proposition}[lemma]{Proposition}
\newtheorem{definition}[lemma]{Definition}
\begin{document}

\def\C{{\mathbb C}}
\def\N{{\mathbb N}}
\def\Z{{\mathbb Z}}
\def\R{{\mathbb R}}
\def\PP{\cal P}
\def\p{\rho}
\def\phi{\varphi}
\def\ee{\epsilon}
\def\ll{\lambda}
\def\l{\lambda}
\def\a{\alpha}
\def\b{\beta}
\def\D{\Delta}
\def\g{\gamma}
\def\rk{\text{\rm rk}\,}
\def\dim{\text{\rm dim}\,}
\def\ker{\text{\rm ker}\,}
\def\square{\vrule height6pt width6pt depth 0pt}
\def\epsilon{\varepsilon}
\def\phi{\varphi}
\def\kappa{\varkappa}
\def\strl#1{\mathop{\hbox{$\,\leftarrow\,$}}\limits^{#1}}

\def\wz{\thinspace}
\def\proof{P\wz r\wz o\wz o\wz f.\hskip 6pt}
\def\quest#1{\hskip5pt {\scshape  Problem} {\rm #1}.\hskip 6pt}
\def\leq{\leqslant}
\def\geq{\geqslant}
\def\pd#1#2{\frac{\partial#1}{\partial#2}}
\def\limsup{\mathop{\overline{\hbox{\rm lim}\,}}}
\def\ug#1#2{\left\langle#1,#2\right\rangle}
\def\kk#1#2{{\k\langle#1,#2\rangle}}
\def\sv{\bf{ k} \langle X \rangle}
\def\k{k }
\def\lxr{\langle X \rangle}
\def\defin#1{\smallskip\noindent
{\scshape  Definition} {\bf #1}{\bf .}\hskip 8pt\sl}

\def\doubarr#1#2{\mathop{\hbox{$\vcenter{\offinterlineskip\halign
{\kern2pt\hfil##\hfil\kern2pt\cr \vrule height6pt width0pt
depth0pt\cr \smash{${\longleftarrow}\!\!{-}\!\!{-}$}\cr \vrule
height4pt width0pt depth0pt\cr
\smash{${\longleftarrow}\!\!{-}\!\!{-}$}\cr
}}$}}\limits_{#1}^{#2}}

\def\bull#1{\mathop{\bullet}\limits_{#1}}

\def\siglearr#1{\mathop{\hbox{${-}\!\!{-}\!\!{\longrightarrow}$}}\limits^{#1}}

\font\LINE=line10 scaled 1440 \font\Line=line10 \font\bIg=cmr10
scaled 1728 \font\biG=cmr10 scaled 1440 \font\LIne=line10
\def\lini{{\LINE\char"40}}
\def\li{{\Line\char"40}}
\def\lili{{\LIne\char"40}}
\def\pph{\vrule height4pt width0pt depth0pt}
\def\pha{{\phantom0}}
\def\sst{\scriptstyle}
\def\ddd{\displaystyle}
\def\>{\multispan}

\def\C{{\mathbb C}}
\def\N{{\mathbb N}}
\def\Z{{\mathbb Z}}
\def\R{{\mathbb R}}
\def\F{{\cal F}}
\def\U{{\cal U}}
\def\M{{\cal M}}
\def\Q{{\cal Q}}
\def\H{{\cal H}}
\def\E{{\cal E}}
\def\rr{{\cal R}}
\def\pp{{\cal P}}
\def\epsilon{\varepsilon}
\def\kappa{\varkappa}
\def\phi{\varphi}
\def\leq{\leqslant}
\def\geq{\geqslant}
\def\dim{\hbox{\rm dim}\,}
\def\ker{\hbox{\rm ker}\,}
\def\Cent{\hbox{\rm Cent}\,}
\def\ext{\hbox{\rm ext}\,}
\def\det{\hbox{\rm det}\,}
\def\deg{\hbox{\rm deg}\,}
\def\ssub#1#2{#1_{{}_{{\scriptstyle #2}}}}

\def\ramka#1#2#3{\hbox{$\vcenter{\offinterlineskip\halign
{\vrule\vrule\kern#2pt\hfil##\hfil\kern#2pt\vrule\vrule\cr
\noalign{\hrule}
\noalign{\hrule}
\vrule height #3pt depth0pt width0pt\cr
#1\cr
\vrule height #3pt depth0pt width0pt\cr
\noalign{\hrule}
\noalign{\hrule}}
}$}}

\def\jdots{\hbox{$\vcenter{\offinterlineskip\halign
{\hfil##\hfil\kern2pt&\hfil##\hfil\kern2pt&\hfil##\hfil\cr
&&.\cr
\noalign{\kern3pt}
&.&\cr
\noalign{\kern3pt}
.&&\cr
}}$}}

%*MATR1*
\def\Mone{\,\,\,\,\,\,\hbox{$\vcenter{\offinterlineskip\halign
{##\hfil&##\hfil&##\hfil&##\hfil&##\hfil&##\hfil&##\hfil\cr
&&&&&\smash{$\sst\bullet$}&$\sst n-1$\cr
\lini&\lini&\lini&&&\smash{$\pha\atop\ddd\vdots$}&\cr
\lini&\lini&\lini&\lini&&&\cr
\lini&\lini&\lini&\lini&\lini&&\smash{${}_2$}\cr
&\lini&\lini&\lini&\lini&&\smash{${}_1$}\cr
&&\lini&\lini&\lini&&\smash{${}_0$}\cr
\smash{$\sst\bullet$}&\rlap{\ $\dots$}&&&&&\cr
\vrule height 3pt depth0pt width0pt\cr&&&&&&\cr
\llap{$\sst-n$}$\sst+1$&&\rlap{$\,\,\,\sst-2$}&\rlap{$\,\,\,\sst-1$}&&&\cr
}}$}\,}

%*MATR2*
\def\MtwoA{\hbox{$\vcenter{\offinterlineskip\halign
{##\hfil&##\hfil&##\hfil&##\hfil&##\hfil\cr
0&\ \ 1&&\ \smash{\lower 6pt\hbox{\biG0}}&\cr
\noalign{\kern-2pt}
&$\,\ddots$&$\ddots$&&\cr
\noalign{\kern-6pt}
&&$\ddots$&$\ddots$&\cr
\noalign{\kern-6pt}
&\smash{\biG0}&&$\ddots$&\smash{\raise 6pt\hbox{\ 1}}\cr
\noalign{\kern6pt}
&&&&\smash{\raise 1pt\hbox{\ 0}}\cr
}}$}}

\def\MtwoB{\small \,\hbox{$\vcenter{\offinterlineskip\halign
{##\hfil&##\hfil&##\hfil&##\hfil&##\hfil&##\hfil\cr
&&&\rlap{\large$\dots$}&&\smash{$\sst\bullet$}\cr
\lini&\rlap{\lini}\kern.5pt\rlap{\lini}\kern.5pt\lini\kern-1pt&\lini&&&{\large\smash{$\pha\atop\ddd\vdots$}}\cr
&\lini&\rlap{\lini}\kern.5pt\rlap{\lini}\kern.5pt\lini\kern-1pt&\lini&&\cr
&&\lini&\rlap{\lini}\kern.5pt\rlap{\lini}\kern.5pt\lini\kern-1pt&\lini&\cr
&\smash{\bIg0}&&\lini&\rlap{\lini}\kern.5pt\rlap{\lini}\kern.5pt\lini\kern-1pt&\cr
&&&&\lini&\cr
}}$\normalsize}}

%*MATR4*
\def\Mfour{\,\hbox{$\vcenter{\offinterlineskip\halign
{\hfil##&##&##&##\hfil\cr
\ramka{$J_1$}{7}{7}&&\ \bIg0\cr
&\ramka{$J_2$}{10}{10}&&\cr
\noalign{\kern-5pt}
\smash{\lower 5pt\hbox{\bIg0\ }}&&$\ddots$&\cr
&&&\ramka{$J_m$}{4}{5}\cr
}}$}\,}

%*MATR5*
\def\Mfive{\,\hbox{$\vcenter{\offinterlineskip\halign
{\vrule\hfil##\hfil\vrule&\hfil##\hfil\vrule&\hfil##\hfil\vrule&\hfil##\hfil\vrule\cr
\noalign{\hrule}
\ramka{$A_{11}$}{9}{12}&$A_{12}$&&$A_{1m}$\cr
\noalign{\hrule}
&\ramka{$A_{22}$}{12}{15}&&\cr
\noalign{\hrule}
\noalign{\kern-5pt}
&&$\ddots$&\cr
\noalign{\hrule}
$A_{m1}$&$A_{m2}$&&\ramka{$A_{mm}$}{4}{10}\cr
\noalign{\hrule}
}}$}\,}

%*MATR6*
\def\Msix{\,\hbox{$\vcenter{\offinterlineskip\halign
{\vrule\hfil##\hfil\vrule&\hfil##\hfil\vrule&\hfil##\hfil\vrule&\hfil##\hfil\vrule\cr
\noalign{\hrule}
\ramka{$[A_{11}]$}{9}{14}&$[A_{12}]$&&$[A_{1m}]$\cr
\noalign{\hrule}
&\ramka{$[A_{22}]$}{12}{17}&&\cr
\noalign{\hrule}
\noalign{\kern-5pt}
&&$\ddots$&\cr
\noalign{\hrule}
$[A_{m1}]$&$[A_{m2}]$&&\ramka{$[A_{mm}]$}{4}{12}\cr
\noalign{\hrule}
}}$}\,}

\vfill\break

%*MATR7*
%\def\MsevenA{\,\hbox{$\vcenter{\offinterlineskip\halign
%{\vrule\kern65pt##\hfil&##\hfil&##\hfil&##\hfil&##\hfil&##\hfil\vrule\cr
%&&&&&\rlap{\smash{\lower2pt\hbox{\kern-2pt$\sst\bullet$}}}\cr
%\noalign{\hrule}
%\lini&\lini&\lini&&\rlap{\kern-4pt\smash{$\jdots$}}&\cr
%&\lini&\lini&\lini&&\cr
%\kern-30pt\smash{\bIg0}&&\lini&\lini&\lini&\cr &&&\lini&\lini&\cr
%&&&&\lini&\cr \noalign{\hrule} }}$}\,}

%\def\MsevenB{\,\hbox{$\vcenter{\offinterlineskip\halign
%{\vrule##\hfil&##\hfil&##\hfil&##\hfil&##\hfil&##\hfil\vrule\cr
%&&&&&\rlap{\smash{\lower2pt\hbox{\kern-2pt$\sst\bullet$}}}\cr
%\noalign{\hrule}
%\lini&\lini&\lini&&\rlap{\kern-4pt\smash{$\jdots$}}&\cr
%&\lini&\lini&\lini&&\cr
%&&\lini&\lini&\lini&\cr
%&&&\lini&\lini&\cr
%&&\smash{\lower30pt\hbox{\bIg0}}&&\lini&\cr
%\vrule height65pt depth0pt width0pt &&&&&\cr
%\noalign{\hrule}
%}}$}\,}

\def\MsevenA{\,\hbox{$\vcenter{\offinterlineskip\halign
{\vrule\kern45pt##\hfil&##\hfil&##\hfil&##\hfil&##\hfil&##\hfil\vrule\cr
&&&&&\rlap{\smash{\lower2pt\hbox{\kern-2pt$\sst\bullet$}}}\cr
\noalign{\hrule}
\lili&\lili&\lili&&\rlap{\kern-4pt\smash{$\jdots$}}&\cr
&\lili&\lili&\lili&&\cr
\kern-30pt\smash{\bIg0}&&\lili&\lili&\lili&\cr &&&\li&\li&\cr
&&&&\lili&\cr \noalign{\hrule} }}$}\,}

\def\MsevenB{\,\hbox{$\vcenter{\offinterlineskip\halign
{\vrule##\hfil&##\hfil&##\hfil&##\hfil&##\hfil&##\hfil\vrule\cr
&&&&&\rlap{\smash{\lower2pt\hbox{\kern-2pt$\sst\bullet$}}}\cr
\noalign{\hrule}
\lili&\lili&\lili&&\rlap{\kern-4pt\smash{$\jdots$}}&\cr
&\lili&\lili&\lili&&\cr &&\lili&\lili&\lili&\cr &&&\lili&\lili&\cr
&&\smash{\lower30pt\hbox{\bIg0}}&&\lili&\cr \vrule height45pt
depth0pt width0pt &&&&&\cr \noalign{\hrule} }}$}\,}

%*MATR8*
\def\Meighta{\hbox{$\vcenter{\offinterlineskip\halign
{\vrule\vrule##\hfil&##\hfil&##\hfil&##\hfil\vrule\vrule\cr
\noalign{\hrule}
\noalign{\hrule}
\li&\rlap{\li}\kern.4pt\rlap{\li}\kern.4pt\li\kern-.8pt&\li&\cr
&\li&\rlap{\li}\kern.4pt\rlap{\li}\kern.4pt\li\kern-.8pt&\li\cr
&&\li&\rlap{\li}\kern.4pt\rlap{\li}\kern.4pt\li\kern-.8pt\cr
&&&\li\cr
\noalign{\hrule}
\noalign{\hrule}
}}$}}

\def\Meightaa{\hbox{$\vcenter{\offinterlineskip\halign
{##\hfil&##\hfil&##\hfil&##\hfil\cr
\li&\li&\li&\cr
&\li&\li&\li\cr
&&\li&\li\cr
&&&\li\cr
}}$}}

\def\Meightb{\hbox{$\vcenter{\offinterlineskip\halign
{\vrule\vrule##\hfil&##\hfil&##\hfil&##\hfil&##\hfil\vrule\vrule\cr
\noalign{\hrule}
\noalign{\hrule}
\li&\rlap{\li}\kern.4pt\rlap{\li}\kern.4pt\li\kern-.8pt&\li&&\cr
&\li&\rlap{\li}\kern.4pt\rlap{\li}\kern.4pt\li\kern-.8pt&\li&\cr
&&\li&\rlap{\li}\kern.4pt\rlap{\li}\kern.4pt\li\kern-.8pt&\li\cr
&&&\li&\rlap{\li}\kern.4pt\rlap{\li}\kern.4pt\li\kern-.8pt\cr
&&&&\li\cr
\noalign{\hrule}
\noalign{\hrule}
}}$}}

\def\Meightc{\hbox{$\vcenter{\offinterlineskip\halign
{\vrule\vrule##\hfil&##\hfil&##\hfil\vrule\vrule\cr
\noalign{\hrule}
\noalign{\hrule}
\li&\rlap{\li}\kern.4pt\rlap{\li}\kern.4pt\li\kern-.8pt&\li\cr
&\li&\rlap{\li}\kern.4pt\rlap{\li}\kern.4pt\li\kern-.8pt\cr
&&\li\cr
\noalign{\hrule}
\noalign{\hrule}
}}$}}

\def\Meightcc{\hbox{$\vcenter{\offinterlineskip\halign
{##\hfil&##\hfil&##\hfil\cr
\li&\li&\li\cr
&\li&\li\cr
&&\li\cr
}}$}}

\def\Meight{\,\,\hbox{$\vcenter{\offinterlineskip\halign
{\vrule\hfil##\vrule&\hfil##\vrule&\hfil##\hfil\vrule&\hfil##\vrule\cr
\noalign{\hrule}
\Meighta\llap{\lower7pt\hbox{0}\kern23pt}&\Meightaa\llap{\lower7pt\hbox{0}%
\kern33pt}&&\raise5.5pt\Meightcc\llap{\lower7pt\hbox{0}\kern18pt}\cr
\noalign{\hrule}
\raise5pt\Meightaa\llap{\lower7pt\hbox{0}\kern23pt}&\Meightb\llap{\lower7pt%
\hbox{0}\kern33pt}&&\raise10pt\Meightcc\llap{\lower7pt\hbox{0}\kern18pt}\cr
\noalign{\hrule}
&&\raise5pt\hbox{$\,\ddots\,$}&\cr
\noalign{\hrule}
\Meightcc\llap{\lower5pt\hbox{0}\kern23pt}&\Meightcc\llap{\lower5pt%
\hbox{0}\kern33pt}&&\Meightc\llap{\lower5pt\hbox{0}\kern18pt}\cr
\noalign{\hrule}
}}$}\,\,}

%*MATR9*

\vskip1cm

\title{Representation spaces of the Jordan plane}
%Irreducible components of the Jordan variety}

\author{
 Natalia K. Iyudu}

\date{}

\maketitle

\small

\centerline{Max-Planck-Institut F\"ur Mathematik, Vivatsgasse 7, 53111 Bonn, Germany }
%E1 4NS U.K., }

\smallskip

\centerline{Department of Pure Mathematics, Queen's University
Belfast, Belfast BT7 1NN, U.K.}

\smallskip

\centerline{ {\bf e-mail:}  \,\, n.iyudu@qub.ac.uk}

\bigskip

\small

{\bf Abstract}

\bigskip

We investigate relations between the properties of an algebra and
its varieties of finite-dimensional module structures, on the example
of the Jordan plane $R=k\langle x,y\rangle/ (xy-yx-y^2)$.

Complete description of irreducible components of the
representation variety $mod (R,n)$  obtained for any dimension $n$,
it is shown that the variety is equidimensional.

The influence of the property of the non-commutative Koszul (Golod-Shafarevich)
complex to be a DG-algebra resolution of an algebra (NCCI), on the structure of representation spaces is studied.
It is shown that the Jordan plane provides a new example of RCI (representational complete intersection).

\vspace{5mm}

{\bf Key words:} Representation spaces, irreducible components, Golod-Shafarevich complex,
NCCI(noncommutative complete intersections), RCI(representational complete intersections)

{\bf MSC:} Primary: 16G30, 16G60; 16D25; Secondary: 16A24

\normalsize
%\hfill\break \rm

\bigskip
\tableofcontents
\medskip

\section{Introduction}

We investigate how properties of a quadratic algebra are reflected in the properties of its finite dimensional representations. Our main example is the prominent since the  foundation of noncommutative geometry \cite{AS},\cite{SM} quadratic algebra
$R=k\langle x,y\rangle/ (xy-yx-y^2)$, known as the Jordan plane.
According to the Artin-Schelter classification  \cite{AS} there are three types of algebras of global dimension two: the usual quantum plane $k\langle x,y\rangle/ (xy-qyx)$, for $q \neq 0$ and the Jordan plane
are regular algebras, and one more, which is not: $k\langle x,y\rangle/ (xy)$,

The Jordan plane appeared also in many different contexts in mathematics and physics \cite{Ma,Ko,a1,ia,wia,iw,cs}.
To mention a few connections to other important objects,
let us note
that  $R$ is a subalgebra of the first Weyl algebra $A_1$. The
latter has no finite dimensional representations, but $R$ turns out
to have quite a rich structure of them. Category of finite
dimensional modules over $R$ contains, for example, as a full
subcategory ${\rm mod} \, GP(n,2)$, where $GP(n,2)$ is the
Gelfand--Ponomarev algebra \cite{GP} with the nilpotency degrees of
variables $x$ and $y$,  $n$ and $2$ respectively.

We prove that the space of $n$-dimensional representations of the Jordan plane, which we call the {\it Jordan variety}, is equidimensional and irreducible components are parametrised by partitions of $n$.
The description of irreducible components
is given for any $n$. This result is obtained on the basis of a lucky choice of stratification of the representation space. The stratum $\cal U(P)$ is defined as those pairs of matrices, where the matrix $Y $ (the image of the variable $y$ under the representation) has the fixed Jordan form.  Closures of chosen in such a way strata provide a complete list of irreducible components of the corresponding variety.
As a consequence, irreducible components parametrised by partitions of $n$, defined  by the block structure of the Jordan form of $Y$, the eigenvalues of $Y$ are not involved, since in any representation this matrix is nilpotent.

In each stratum there is  a vector bundle structure, such that the dimension of the base is  $m$ and the dimension of the fiber is $n^2-m$, so the dimension of each stratum  is $n^2$.  Since irreducible components of the variety are precisely closures of these strata, we can see that variety of $n$-dimensional representations is equidimensional for any $n$.

Thus the following theorem is proved in the paper.

{\bf Theorem 12.1} {\it Any irreducible component $K_j$ of the
representation variety $mod(R,n)$ of the Jordan plane contains only
one stratum $U_{\cal P}$ from the stratification related to the Jordan
normal form of $Y$, and is the closure of this stratum.

The number of irreducible components of the variety $mod(R,n)$ is
equal to the number of the partitions of $n$.

The variety  $mod(R,n)$ is equidimensional and the dimension of components is equal to $n^2$.}

\vspace{5mm}

The {\it generic} situation in this variety is provided by the component corresponding to the full Jordan block $Y$. We study it in details, especially since the
further results on the structure of representation variety for $R$ show an
exceptional role of such strata, for example, they turn out to be the only building blocks in the
analogue of the Krull-Remark-Schmidt decomposition theorem on the level of irreducible components.
The following holds.

{\bf Corollary}  {\it Only the irreducible component
$K_{(n)}=\overline {{\cal U}_{(n)}}$ which is the closure of the
stratum corresponding to the trivial partition of $n$ (the full block
$Y$) contains an open dense subset consisting of indecomposable
modules}.

Another evidence of the special role of this 'full block' stratum presented in section
\ref{Ger} where we estimate dimensions of image algebras. In section
\ref{Ger} we prove an analogue of the Gerstenhaber--Taussky--Motzkin
theorem \cite{Ger}, \cite{Gur}, \cite{MT} on the dimension of algebras
generated by two commuting matrices. We show that the dimension of
image algebras of representations of $R$ does not exceed $n(n+2)/4$
for even $n$ and $(n+1)^{2} /4$ for odd $n$, and this estimate is
attained in the stratum related the full Jordan block  $Y$.

In the section \ref{ring} we perform a study of the generic strata in terms of the properties of algebras-images. Namely, for algebras-images from the strata with full-block $Y$,  we clarify,
 using the Ringel classification of local complete algebras  \cite{Ringloc}, that they are all {\it tame} for $n \leq 4$
and {\it wild} for $n \geq 5$. The main tool in this section is to solve the problem of finding defining relations for algebra-image of finite dimensional representation. When it is done, and we have precisely written relations, we compare them with the Ringel list of local complete algebras, to determine whether they are tame or wild.

Referring to results due to Kontsevich \cite{Maxim}, concerning the existence of the {\it algebraic} bundle
on the generic subvariety in the space of all pairs of $n \times n$ matrices ${\cal M}_n$; to the well-known results
on the {\it commuting variety}, where on the generic subvariety (consisting of pairs of matrices, one of which is nonderogatory)
there is a vector bundle with the base, consisting of the set of nonderogatory matrices,
of dimension $n^2$, and the $n$-dimensional fiber, consisting of all polynomials on the matrix; we guess
that in any closed subvariety of  ${\cal M}_n$ given by a (homogeneous) quadratic matrix equation, in general position there is a natural structure of algebraic (or sometimes even a vector) bundle.

Another main subject of our attention is to investigate, how certain properties of Golod-Shafarevich (noncommutative Koszul) complex, like property of algebra of being NCCI ({\it noncommutative complete intersection}) could be reflected on the level of representation varieties. The algebras which are NCCI
fall into the class of Golod-Shafarevich algebras, which is defined  in [EF] by E.Zelmanov and have been studied in \cite{Ag}, \cite{er}, \cite{Ag1},\cite{Ag2},\cite{TV} etc.
We ensure that Jordan plane is a NCCI and derive from the previous results on representation spaces that (for infinitely many $n$), spaces of $n$-dimensional representations of Jordan plane are complete intersections. By this we establish a new example of RCI ({\it representational complete intersection}), the notion introduced by Ginzburg and Etingof in \cite{GE}. In this paper the main source of examples of RCIs came from the preprojective algebras of finite quivers. The general interrelation between those two properties also was clarified, namely it was shown that RCI implies NCCI (in the above sense),
 so the restriction on algebra formulated in terms of properties of representation spaces (RCI)is quite strong.

Let us note, that sections 1-9 can be considered as a preparatory
 part of the paper, where we establish various properties  of Jordan plane itself and
 algebras-images of finite dimensional representations.  These include facts that $R$ is Koszul,
 multiplication formulas in canonical basis, description of  prime, primitive ideals and automorphisms, the fact that $R$ is residually finite dimensional, as well as description of
 irreducible modules,   some properties of  indecomposable modules, description of all modules in terms of ${\cal B}-Toeplitz$ matrices. These facts were established in our earlier work \cite{MP}, and  will serve here as a basis for obtaining further results.

\section{Structural properties of the images of
representations and quivers}\label{quiv}

We consider here the case when $k$ is an algebraically closed field
of characteristic zero. Let $\p: R \rightarrow {\rm End} (k^n)$ be
an arbitrary finite dimensional representation of $R$, denote by
$A_{\p,n}=\p (R)$ an image of $R$ in the endomorphism ring. We will
write also $A_n$ or $A$ when it is clear from the context which $\p$
and $n$ we mean.

We derive in this section some main structural properties   of algebras
$A_{n,\p}$. They all are basic algebras,
the image of $y$ in any such algebra is nilpotent, complete system of orthogonal
idempotents in $A$ is defined by the set of different eigenvalues
of $\p(x)$, etc.

Lemmata below describe the  structural properties of image algebras
for $R$.

The following fact certainly allows many different proofs, we
present here the shortest we know.

%\begin{lemma}\label{la}
\begin{lemma}\label{l2.1} Let $Y=\rho_n(y)$. Then the matrix $Y$ is nilpotent.
\end{lemma}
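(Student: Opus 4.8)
The plan is to pass to matrices: set $X=\rho_n(x)$ and $Y=\rho_n(y)$ in $\mathrm{End}(k^n)$, so that the defining relation of $R$ reads $XY-YX=Y^{2}$, and to squeeze out of this relation enough trace identities to force the characteristic polynomial of $Y$ to be $t^{n}$. The ground field being algebraically closed of characteristic zero is used at the very end, both to have eigenvalues available and to divide by integers.

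First I would prove, by induction on $k\ge 1$, the commutator formula
\[
XY^{k}-Y^{k}X=k\,Y^{k+1}.
\]
The base case $k=1$ is precisely $XY-YX=Y^{2}$, and the inductive step is a one-line manipulation: $XY^{k+1}=(XY^{k})Y=(Y^{k}X+kY^{k+1})Y=Y^{k}(XY)+kY^{k+2}=Y^{k}(YX+Y^{2})+kY^{k+2}=Y^{k+1}X+(k+1)Y^{k+2}$. Taking traces and using $\mathrm{tr}(XY^{k})=\mathrm{tr}(Y^{k}X)$ gives $k\,\mathrm{tr}(Y^{k+1})=0$ for every $k\ge 1$, hence, since $\mathrm{char}\,k=0$, $\mathrm{tr}(Y^{j})=0$ for all $j\ge 2$.

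It remains to deduce nilpotency of $Y$ from the vanishing of all its higher power sums; this is the only point that needs a small idea. Let $\lambda_{1},\dots,\lambda_{n}\in k$ be the eigenvalues of $Y$ counted with multiplicity, so $p_{j}:=\sum_{i}\lambda_{i}^{\,j}=0$ for all $j\ge 2$. Consider the rational function $f(t)=\sum_{i}\lambda_{i}^{2}/(1-\lambda_{i}t)$; its power-series expansion at $t=0$ is $\sum_{m\ge 0}p_{m+2}\,t^{m}=0$, so $f\equiv 0$. Grouping the $\lambda_{i}$ by their distinct nonzero values $\mu_{1},\dots,\mu_{r}$ with multiplicities $c_{1},\dots,c_{r}$ (the zero eigenvalues contribute nothing), we get $\sum_{s=1}^{r}c_{s}\mu_{s}^{2}/(1-\mu_{s}t)=0$, a sum of simple partial fractions with pairwise distinct poles $1/\mu_{s}$; comparing residues forces $c_{s}\mu_{s}^{2}=0$, hence $c_{s}=0$, for each $s$. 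Thus $Y$ has no nonzero eigenvalue, i.e. $Y^{n}=0$. (Alternatively, one may finish via Newton's identities together with Cayley--Hamilton: from $p_{2}=\dots=p_{n}=0$ one gets $e_{k}=e_{1}^{k}/k!$ for $k\le n$, and then $p_{n+1}=0$ forces $e_{1}^{\,n+1}=0$, so all elementary symmetric functions of the eigenvalues vanish.) Everything before the last step is a routine induction and a trace computation, so the whole argument is short.
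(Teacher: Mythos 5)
Your argument is correct, and it is genuinely a different route from the paper's, though both rest on the vanishing of traces of commutators. The paper argues by contradiction: assuming $Y$ has a nonzero eigenvalue $z$, it conjugates by the eigenprojector $P$ onto the generalized eigenspace $E_z$, checks that the compressed pair $X'=PXP$, $Y'=PYP$ still satisfies $X'Y'-Y'X'=Y'^2$, and then compares traces, $0=\mathrm{tr}(X'Y'-Y'X')$ against $\mathrm{tr}(Y'^2)=z^2\dim E_z\neq 0$. You instead establish the global identity $XY^k-Y^kX=kY^{k+1}$ once and for all, take traces to get $\mathrm{tr}(Y^j)=0$ for all $j\geq 2$, and then finish by a power-sum argument (either the partial-fraction trick or Newton's identities plus Cayley--Hamilton). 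Your approach has the merit of avoiding the eigenprojection machinery and of producing, as a by-product, the commutator formula $[X,Y^k]=kY^{k+1}$, which the paper in any case proves later (Lemma \ref{l3.1.1}, essentially). The paper's approach has the merit of being self-contained at the spot and of generalizing readily to the companion statement in Lemma \ref{nilnil}, where only $[X,Y]$ is assumed to commute with $Y$ and a direct power-sum computation is less immediate. One small point worth making explicit in your write-up: the passage from ``$\mathrm{tr}(Y^j)=0$ for $j\geq 2$'' to nilpotency genuinely needs the extra step you supply, since the usual criterion requires $p_1$ as well; your partial-fraction (equivalently, Newton-identity) argument correctly handles the missing $p_1$. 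Both steps where you invoke characteristic zero (dividing by $k$, and concluding $c_s=0$ from $c_s\mu_s^2=0$) are legitimate under the paper's standing hypotheses.
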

%\end{lemma}

\proof Suppose that the matrix $Y$ is not nilpotent and hence has a
nonzero eigenvalue $z$. We take the eigenprojector $P$ on the eigenspace $E_z=ker(Y-zI)^n$,
corresponding to this eigenvalue. It is obviously commutes with  $Y: PY=YP$ and is an idempotent operator:
$P^2=P$. Hence multiplying our relation $XY-YX=Y^2$ from the right
and from the left hand  side by $P$ and using the above two facts, we
can observe that operators $X'=PXP$ and $Y'=PYP$ also satisfy the
same relation: $X'Y'-Y'X'={Y'}^2$. Taking into account that $Y'$ has only
one nonzero eigenvalue $z$, we see that traces of right and left hand sides of the relation
can not coincide. Indeed, $tr(X'Y'-Y'X')=0 $ and $tr Y'^2=z^2 dim E_z \neq 0$. This contradiction completes the proof. $\Box$

Let us prove also a little bit more general fact of linear algebra.

\begin{lemma} \label{nilnil}
Let $X,Y$ be $n\times n$ matrices over an algebraically closed field
$k$ of characteristic zero. Assume that the commutator $Z=XY-YX$
commutes with $Y$. Then $Z$ is nilpotent.
\end{lemma}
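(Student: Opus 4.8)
The plan is to reduce the general case to the nilpotency already established in Lemma \ref{l2.1} by working in a single eigenspace of $Y$, essentially repeating the trace argument but now keeping track of where $Z$ lives. Since $k$ is algebraically closed, decompose $k^n$ into generalized eigenspaces of $Y$; because $Z$ commutes with $Y$, each generalized eigenspace $E_\mu = \ker(Y-\mu I)^n$ is $Z$-invariant, and of course $Y$-invariant. So it suffices to prove the claim on each $E_\mu$ separately, i.e.\ I may assume $Y = \mu I + N$ with $N$ nilpotent. Then $Z = XY - YX = X(\mu I + N) - (\mu I + N)X = XN - NX$, the commutator of $X$ with a \emph{nilpotent} matrix $N$, and moreover $Z$ commutes with $N$ (since it commutes with $Y$ and with $\mu I$).

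Thus the problem is: if $N$ is nilpotent and $[X,N] =: Z$ commutes with $N$, show $Z$ is nilpotent. Here I would use the trace criterion: a matrix $Z$ is nilpotent iff $\mathrm{tr}(Z^r) = 0$ for all $r \geq 1$. Compute $\mathrm{tr}(Z^r) = \mathrm{tr}(Z^{r-1}(XN-NX)) = \mathrm{tr}(Z^{r-1}XN) - \mathrm{tr}(Z^{r-1}NX)$. Using cyclicity of trace on the second term, $\mathrm{tr}(Z^{r-1}NX) = \mathrm{tr}(XZ^{r-1}N)$, and since $Z$ commutes with $N$ we can also write $\mathrm{tr}(Z^{r-1}XN) = \mathrm{tr}(Z^{r-1}XN)$; the key point is that $Z^{r-1}$ commutes with $N$, so $Z^{r-1}X N - N Z^{r-1} X$ has trace zero, hence $\mathrm{tr}(Z^r) = \mathrm{tr}\big(Z^{r-1}(XN - NX)\big) = \mathrm{tr}\big([Z^{r-1}X, N]\big) = 0$. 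Therefore every power of $Z$ has trace zero, so $Z$ is nilpotent.

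The main obstacle, such as it is, is making sure the commuting hypothesis is used correctly: the identity $\mathrm{tr}(Z^{r-1}XN) = \mathrm{tr}(NZ^{r-1}X)$ needs $[Z^{r-1},N]=0$, which follows from $[Z,N]=0$, and this is exactly what lets the whole expression collapse to the trace of a commutator. One should also note the reduction to a single generalized eigenspace is harmless because nilpotency on each block implies nilpotency overall (the matrix is block-triangularizable compatibly with the decomposition, and a block-diagonal-ish matrix is nilpotent iff each block is). Alternatively, if one prefers to avoid the eigenspace decomposition, the same trace computation works directly for $Z = XY-YX$ with $Y$ arbitrary: $\mathrm{tr}(Z^r) = \mathrm{tr}(Z^{r-1}(XY-YX)) = \mathrm{tr}([Z^{r-1}X,Y]) = 0$ using only $[Z,Y]=0$, so in fact no hypothesis on $Y$ beyond commuting with $Z$ is needed — the generalized-eigenspace reduction is only there to connect the statement conceptually to Lemma \ref{l2.1}, and can be dropped for a shorter proof.
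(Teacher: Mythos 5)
Your proof is correct, and the ``alternative'' version at the end is the one to keep: $\mathrm{tr}(Z^r) = \mathrm{tr}\bigl(Z^{r-1}(XY-YX)\bigr) = \mathrm{tr}\bigl([Z^{r-1}X,\,Y]\bigr) = 0$ for all $r\geq 1$, using only $[Z^{r-1},Y]=0$, and then the Newton-identity criterion (characteristic zero) gives nilpotency. This is a genuinely different route from the paper's. The paper argues by contradiction: assuming $Z$ has a nonzero eigenvalue $z$, it builds the eigenprojector $P$ onto $L=\ker(Z-zI)^n$, observes that $P$ commutes with both $Z$ and $Y$, sandwiches the defining relation by $P$ to get $ZP = PXPY - YPXP$, and then takes a single trace to force $z\dim L = 0$. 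So the paper's proof localizes the trace computation to one generalized eigenspace of $Z$ and uses only $\mathrm{tr}\,Z^1$ there, at the cost of constructing $P$ and invoking algebraic closure to get the eigenvalue; yours globalizes by computing $\mathrm{tr}\,Z^r$ for all $r$, which is shorter, avoids the projector, and in fact never uses algebraic closure at all (Newton's identities over any field of characteristic zero suffice, since one may pass to the closure to read off eigenvalues). Both arguments rely essentially on characteristic zero.

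One small caveat about your first reduction (to $Y=\mu I+N$ on a generalized eigenspace of $Y$): after restricting to $E_\mu$, the matrix $X$ need not preserve $E_\mu$, so you cannot literally ``assume'' $Y=\mu I+N$ without also replacing $X$ by its diagonal block $X_{\mu\mu}$ relative to the decomposition $k^n=\bigoplus E_\mu$. This does work --- since $Y$ and $Z$ are block-diagonal, the diagonal block of $Z=XY-YX$ is exactly $[X_{\mu\mu},Y_\mu]$ and still commutes with $Y_\mu$ --- but it is a step worth writing. As you observe, though, the reduction is unnecessary and can be dropped entirely, so this does not affect the correctness of the final argument.
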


\proof
Assume the contrary.
Then $Z$ has a non-zero eigenvalue $z\in k$. Let
$$
L={\rm ker}\,(Z-zI)^n\ \ \text{and}\ \ N={\rm Im}(Z-zI)^n.
$$
The subspace $L$ is known as a main subspace for $Z$ corresponding
to the eigenvalue $z$. Clearly $L\neq \{0\}$. It is well-known that
the whole space $k^n$ can be split to a direct sum $L\oplus N$ of $Z$-invariant linear
subspaces $L$ and $N$. Due to $ZY=YZ$, the subspaces $L$ and $N$ are
also invariant for $Y$.

Consider the linear projection $P$ along $N$ onto $L$. Since $L$ and
$N$ are invariant under both $Y$ and $Z$, we have $ZP=PZ$ and
$YP=PY$. Multiplying the equality  $Z=XY-YX$ by $P$ from the left
and from the right hand side and using the equalities $ZP=PZ$,
$YP=PY$ and $P^2=P$, we get
$$
ZP=PXPY-YPXP.
$$
Since $ZP$ vanishes on $N$ and $ZP|_L$ has only one eigenvalue $z$,
then after restriction to $L$, we have ${\rm tr}\,ZP=z\,{\rm
dim}\,L$. On the other hand ${\rm tr}\,PXPY={\rm tr}\,YPXP$ since
the trace of a product of two matrices does not depend on the order
of the product. Thus, the last display implies that
$
z\,{\rm dim}\,L=0,
$
which is not possible since $z\neq 0$ and ${\rm dim}\,L>0$. $\Box$

Coming back to the case of the Jordan plane, we have further

\begin{lemma}\label{l2.2} Let $X=\rho_n(x)$ and $\{\lambda_1,...,\lambda_r    \}= {\rm Spec} X$. Then the matrix
$S=S(X)=(X-\lambda_1 I) \dots (X-\lambda_r I)$ is nilpotent.
\end{lemma}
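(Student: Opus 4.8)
The plan is to exploit the same trace obstruction used in Lemmata \ref{l2.1} and \ref{nilnil}, applied to the eigenspace decomposition of $X$ rather than of $Y$. Write $\lambda_1,\dots,\lambda_r$ for the distinct eigenvalues of $X$ and let $L_i = \ker(X-\lambda_i I)^n$ be the generalized eigenspaces, so that $k^n = L_1 \oplus \dots \oplus L_r$. Each $L_i$ is $X$-invariant by construction; the key preliminary observation is that each $L_i$ is also $Y$-invariant. This follows from the relation $XY-YX=Y^2$: on $L_i$ the operator $X$ has single eigenvalue $\lambda_i$, and a standard argument (writing $X = \lambda_i I + N_i$ with $N_i$ nilpotent on $L_i$ and iterating the commutation relation, or equivalently using that the Jordan projector $P_i$ onto $L_i$ is a polynomial in $X$ and checking that $P_i$ commutes with $Y$ via the relation) shows $Y L_i \subseteq L_i$. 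Actually the cleanest route: $P_i$ is a polynomial $p_i(X)$; from $XY-YX = Y^2$ one gets, by induction, $X^kY - YX^k \in Y\cdot k[X,Y]$ lying in the right ideal generated by $Y$, hence $[p_i(X),Y]$ is a polynomial combination of terms each divisible by $Y$ on one side — but I should be careful here, so the safest statement to lean on is simply that $L_i$, being spanned by root vectors of $X$, is preserved by anything commuting with a suitable power; I will instead argue directly with traces restricted to a single $L_i$ without needing full $Y$-invariance.

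Here is the version I would actually carry out. Restrict attention to $L = L_i$ for a fixed $i$. Suppose for contradiction that $S(X)$ is not nilpotent; then $S(X)$ has a nonzero eigenvalue, and since $S(X) = \prod_j (X-\lambda_j I)$ acts on $L_i$ as $\prod_j (X - \lambda_j I)|_{L_i}$, which equals $(X-\lambda_i I)|_{L_i} \cdot \prod_{j\neq i}(X-\lambda_j I)|_{L_i}$, and the second factor is invertible on $L_i$ while the first is nilpotent on $L_i$, we conclude $S(X)$ is nilpotent on each $L_i$, hence nilpotent on $k^n$ — contradiction. So in fact the statement is almost immediate from the Jordan decomposition of $X$ alone, and the commutation relation is not even needed. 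The main step is just: $S(X)$ restricted to the generalized eigenspace $L_i$ is the product of the nilpotent operator $(X-\lambda_i I)|_{L_i}$ with an invertible operator that commutes with it, hence is nilpotent; and a block-diagonal operator that is nilpotent on each block is globally nilpotent.

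The only point requiring a word of care — and what I would flag as the genuine (if minor) obstacle — is justifying that the factors of $S(X)$ for $j \neq i$ are invertible on $L_i$: this is because $(X-\lambda_j I)|_{L_i}$ has spectrum $\{\lambda_i - \lambda_j\}$, a single nonzero value since the $\lambda_j$ are distinct, so it is invertible, and the various factors commute with each other and with $(X-\lambda_i I)$ as they are all polynomials in $X$. Thus $S(X)|_{L_i} = (\text{nilpotent}) \cdot (\text{invertible})$ with the two commuting, so its only eigenvalue is $0$. Summing over $i$ finishes the proof. I would present this concisely in two short paragraphs, perhaps remarking that unlike Lemma \ref{l2.1} this is pure linear algebra about a single matrix $X$ and holds over any field, the algebraic closure being used only to speak of the spectrum; alternatively one notes $S(X)$ is the value at $X$ of the radical $\prod(t-\lambda_j)$ of the characteristic polynomial, hence its $n$-th power is divisible by the characteristic polynomial, so $S(X)^n = 0$ by Cayley--Hamilton, which is the slickest argument of all and the one I would ultimately give.
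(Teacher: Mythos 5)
Your proof is correct, and the core idea coincides with the paper's: the paper gives a one-line proof by citing the spectral mapping theorem $\mathrm{Spec}\,p(X)=p(\mathrm{Spec}\,X)$ to conclude $\mathrm{Spec}\,S=\{0\}$, and your block-decomposition argument over the generalized eigenspaces $L_i$ is essentially an inline proof of that same fact, so it is the same route in more detail. You are also right that the commutation relation $XY-YX=Y^2$ plays no role here; the paper's proof likewise does not use it. The Cayley--Hamilton observation you offer at the end --- that $S(t)^n$ is divisible by the characteristic polynomial, so $S(X)^n=0$ --- is a genuinely distinct and arguably tidier route, avoiding any mention of spectrum beyond defining $S$ itself; the paper does not take that path, but it would have been a perfectly good alternative.
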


\proof Note that Spec$\,p(X)=p({\rm Spec}\,X)$ for any polynomial
$p$. Spec$\,X$ in our case is $\{\lambda_1,\dots,\lambda_r\}$ and
hence Spec$\,S=\{0\}$. Therefore the matrix $S$ is nilpotent. $\Box$

Let $J(A)=J$ be the Jacobson radical of the algebra $A_{\p,n}$.

\begin{lemma}\label{l2.3} Any nilpotent element of the algebra $A=\rho(R)$
belongs to the radical $J(A)$.  \end{lemma}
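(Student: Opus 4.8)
The plan is to reduce the claim to the fact that every simple $A$-module is one-dimensional. Since $A=\rho(R)$ is finite dimensional over $k$, it is Artinian, so $J(A)=\bigcap_M\mathrm{Ann}(M)$, the intersection being taken over the (finitely many) simple $A$-modules $M$. If $a\in A$ satisfies $a^m=0$, then on every $A$-module $M$ the operator induced by $a$ is nilpotent; when $\dim M=1$ this operator is a nilpotent scalar, hence $0$. Thus, granted that all simple $A$-modules are one-dimensional, any nilpotent $a$ annihilates each of them and therefore lies in $J(A)$.

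So it remains to prove $\dim M=1$ for a simple $A$-module $M$. View $M$ as a representation of $R$ and write $X_M,Y_M$ for the operators by which $x,y$ act. By Lemma \ref{l2.1} the operator $Y_M$ is nilpotent, hence $\ker Y_M\neq\{0\}$. Apply the defining relation in the form $X_MY_M-Y_MX_M=Y_M^2$: for $v\in\ker Y_M$ one gets $Y_M(X_Mv)=X_MY_Mv-Y_M^2v=0$, so $\ker Y_M$ is invariant under $X_M$, and it is trivially invariant under $Y_M$; hence it is a nonzero submodule of $M$. By simplicity $\ker Y_M=M$, i.e.\ $Y_M=0$. Then $M$ is a simple module over $k[x]$ (with $x$ acting as $X_M$), and since $k$ is algebraically closed this forces $\dim M=1$.

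Putting the two steps together completes the proof. The only real content is the observation that $\ker Y_M$ is a submodule, which makes every simple module collapse to a line; the rest is routine. (An alternative route, not needing passage to simple modules, is to check directly that the two-sided ideal $(Y)\subseteq A$ is nilpotent: from the PBW basis $\{x^iy^j\}$ of $R$ and the identity $y^jx=xy^j-jy^{j+1}$ one sees that the $N$-th power of $(Y)$ lies in the span of the $X^iY^j$ with $j\ge N$, which is zero as soon as $Y^N=0$ by Lemma \ref{l2.1}; hence $(Y)\subseteq J(A)$, and $A/J(A)$ is a quotient of the commutative algebra $A/(Y)$, so it is a reduced commutative semisimple $k$-algebra, i.e.\ $A/J(A)\cong k^{s}$, which contains no nonzero nilpotent.)
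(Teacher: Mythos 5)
Your proof is correct, and it follows a genuinely different route from the paper's. The paper argues \emph{directly}: it first treats the special case of a nilpotent element of the form $Q(X)$ with $Q\in k[x]$, showing $1-a(X,Y)Q(X)$ is invertible by a word-combinatorial count (split $a$ into $u(X)+Yb(X,Y)$, note any length-$mN$ word in $\alpha=u(X)Q(X)$ and $\beta=Yb(X,Y)Q(X)$ either accumulates $Y^m=0$ or contains $\alpha^N=0$), and then reduces a general nilpotent $G(X,Y)=Q(X)+YH(X,Y)$ to this case by pushing $Y$ through. You instead take the \emph{module-theoretic} characterization $J(A)=\bigcap_M\operatorname{Ann}(M)$ over simple $A$-modules (valid since $A$ is finite-dimensional, hence Artinian) and reduce to the claim that every simple $A$-module is one-dimensional: the defining relation gives $Y_M(X_Mv)=X_M(Y_Mv)-Y_M^2v=0$ for $v\in\ker Y_M$, so $\ker Y_M$ is a nonzero submodule (nonzero because $Y_M$ is nilpotent by Lemma~\ref{l2.1}), forcing $Y_M=0$ by simplicity, after which $M$ is a simple module over a quotient of $k[x]$ and hence one-dimensional over the algebraically closed $k$; a nilpotent element then acts by a nilpotent scalar $=0$ on each such $M$. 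What each buys: the paper's argument is self-contained and purely computational, never leaving the relation $[X,Y]=Y^2$; yours is shorter and more structural, requires the standard Artinian description of $J(A)$, and incidentally yields the classification of simple $R$-modules (Corollary~\ref{c3.1}) as a byproduct rather than relying on results proved later. Your parenthetical alternative --- showing the two-sided ideal $(Y)$ is nilpotent by counting occurrences of $Y$, so that $A/J(A)$ is a quotient of the commutative ring $A/(Y)$ and hence reduced --- is closer in spirit to the paper's counting argument, but applied to the ideal $(Y)$ globally rather than to each product $a(X,Y)Q(X)$ separately, and it too is correct.
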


\proof We will use here the feature of an algebra $A$ that it has
the presentation as a quotient of the free algebra, containing our main
relation. Namely, it has a presentation: $A=\k\langle x,y|
xy-yx=y^2, R_A \rangle$, where $R_A \subset \k\langle x,y \rangle$
is the set of additional relations specific for the given image
algebra. Thus we can think of elements in $A$ as of polynomials in
two variables (subject to some relations). Let $Q(x)$ be a
polynomial on one variable $Q(x)\in \k[x]$ and $Q(X)\in A$ be a
nilpotent element with the degree of nilpotency $N$: $Q^N=0$. We
show first that $Q\in J(A)$. We have to check that for any
polynomial $a\in \k\langle x,y\rangle$, $1-a(X,Y)Q(X)$ is
invertible. It suffices to verify that $a(X,Y)Q(X)$ is nilpotent. By
Lemma \ref{l2.1} $Y$ is nilpotent. Denote by $m$ the degree of
nilpotency of $Y$: $Y^m=0$. Let us verify that
$(a(X,Y)Q(X))^{mN}=0$. Present $a(X,Y)$ as $u(X)+Yb(X,Y)$. If then
we consider a word of length not less then $mN$ on letters
$\alpha=u(X)Q(X)$ and $\beta=Yb(X,Y)Q(X)$ then we can see that it is
equal to zero. Indeed, if there are at least $m$ letters $\beta$
then using the relation $XY-YX=Y^2$ to commute the variables one can
rewrite the word as a sum of words having a subword $Y^m$. Otherwise
our word has the subword $\alpha^N=u(X)^NQ(X)^N=0$. Thus, $Q(X)\in
J(A)$.

Note now that if we have an arbitrary nilpotent polynomial
 $G(X,Y)$, we can separate the terms containing $Y$:
 $G(X,Y)=Q(X)+YH(X,Y)$. To obtain nilpotency of any element
 $a(X,Y)G(X,Y)$ it suffices to verify nilpotency of
 $a(X,Y)Q(X)$, which was already proven, because the
 relation $[X,Y]=Y^2$ allows to commute with $Y$,
 preserving (or increasing) the degree of $Y$. $\Box$

Since $A= \rho(R)$ is finite dimensional, we have:
\begin{corollary}\label{c2.1} The Jacobson radical of $A=\rho(R)$ consists
precisely of all nilpotent elements. \end{corollary}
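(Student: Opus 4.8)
The plan is to prove the two inclusions of sets separately, using Lemma \ref{l2.3} for one direction and finite-dimensionality of $A$ for the other. Write $\mathcal{N}$ for the set of all nilpotent elements of $A=\rho(R)$.

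First I would observe that $\mathcal{N}\subseteq J(A)$ is precisely the content of Lemma \ref{l2.3}: any nilpotent element of $A$ lies in the Jacobson radical. For the reverse inclusion $J(A)\subseteq\mathcal{N}$, I would invoke the standard structure theory of finite-dimensional (equivalently Artinian) algebras: since $A=\rho(R)\subseteq\operatorname{End}(k^n)$ is finite-dimensional over $k$, it is left (and right) Artinian, hence its Jacobson radical $J(A)$ is a nilpotent ideal, i.e.\ $J(A)^N=0$ for some $N$. In particular every element $a\in J(A)$ satisfies $a^N=0$, so $a\in\mathcal{N}$. Combining the two inclusions gives $J(A)=\mathcal{N}$, which is the assertion.

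There is essentially no obstacle here — the corollary is a direct consequence of Lemma \ref{l2.3} together with the elementary fact that the radical of an Artinian ring is nilpotent. The only point worth a remark is that the set $\mathcal{N}$ of nilpotent elements is not a priori closed under addition in a general noncommutative ring, so the statement also records the (nontrivial, but now immediate) fact that for these particular image algebras $\mathcal{N}$ is a two-sided ideal, namely $J(A)$ itself. If one prefers to avoid quoting nilpotence of the Artinian radical, one can argue instead that $A/J(A)$ is semisimple and finite-dimensional, hence a product of matrix algebras over $k$ (as $k$ is algebraically closed), in which the only nilpotent central... — but the shortest route is simply: finite-dimensional $\Rightarrow$ $J(A)$ nilpotent $\Rightarrow$ $J(A)\subseteq\mathcal{N}$, and $\mathcal{N}\subseteq J(A)$ by Lemma \ref{l2.3}. $\Box$
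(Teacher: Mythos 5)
Your argument is correct and matches the paper's intent exactly: the paper states Corollary~\ref{c2.1} immediately after Lemma~\ref{l2.3} with the remark ``Since $A=\rho(R)$ is finite dimensional,'' i.e.\ it relies on precisely the two ingredients you use, Lemma~\ref{l2.3} for $\mathcal{N}\subseteq J(A)$ and nilpotency of the Jacobson radical of a finite-dimensional algebra for $J(A)\subseteq\mathcal{N}$.
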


In particular,

\kern-3mm

\begin{corollary}\label{c2.2} Let $Y=\rho(y)$. Then $Y\in J(A)$. \end{corollary}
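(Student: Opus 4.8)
The statement to prove is Corollary \ref{c2.2}: if $Y=\rho(y)$, then $Y\in J(A)$.

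The plan is to deduce this directly from the two preceding results, which have just been established. First I would invoke Lemma \ref{l2.1}, which tells us that $Y=\rho_n(y)$ is a nilpotent matrix; hence $Y$ is a nilpotent element of the finite-dimensional algebra $A=\rho(R)$. Then I would apply Corollary \ref{c2.1}, which asserts that the Jacobson radical $J(A)$ consists precisely of all nilpotent elements of $A$. Combining the two, $Y\in J(A)$, and the proof is complete.

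There is essentially no obstacle here: the statement is a one-line specialization of Corollary \ref{c2.1} to the particular nilpotent element $Y$, with the nilpotency of $Y$ supplied by Lemma \ref{l2.1}. The only thing to be careful about is the logical chain — that $Y$ genuinely lies in $A$ (it does, being the image of the generator $y$ under $\rho$), and that "nilpotent element of $A$" in Corollary \ref{c2.1} is being applied to $Y$ viewed inside $A$ rather than merely as an abstract matrix. Both points are immediate. So the entire proof reads: by Lemma \ref{l2.1} the element $Y$ is nilpotent, and by Corollary \ref{c2.1} every nilpotent element of $A$ lies in $J(A)$; therefore $Y\in J(A)$. $\Box$
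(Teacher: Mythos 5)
Your argument is exactly the one the paper intends: the corollary is stated with the phrase ``In particular,'' immediately after Corollary \ref{c2.1}, so it is meant to follow by combining Lemma \ref{l2.1} (nilpotency of $Y$) with Corollary \ref{c2.1} (the radical consists of all nilpotent elements). Your proof is correct and identical in substance to the paper's.
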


Let us formulate here  another property of the radical, which
will be on use later on.

\begin{corollary}\label{radft}
The Jacobson radical of $A=\rho(R)$ consists of all polynomials on
$X=\p(x)$ and $Y=\p(y)$ without constant term if and only if $X$ is
nilpotent in $A$.
\end{corollary}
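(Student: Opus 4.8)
The plan is to recognise the set in question --- polynomials in $X,Y$ with zero constant term --- as the augmentation ideal of $A$, and then reduce the equivalence to facts already established.

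First I would observe that, since the defining element $xy-yx-y^{2}$ of $R$ has zero constant term, the map $x,y\mapsto 0$ induces an algebra homomorphism $\varepsilon\colon A=\rho(R)\to k$. Put $\mathfrak m=\ker\varepsilon$. Because every element of $A$ is a polynomial in $X$ and $Y$ and $\varepsilon$ annihilates exactly the part of positive degree, $\mathfrak m$ is precisely the set of polynomials in $X,Y$ without constant term; in particular $\mathfrak m$ is a two-sided ideal, and since $A$ is generated as an algebra by $X$ and $Y$, $\mathfrak m$ is the two-sided ideal of $A$ generated by $X$ and $Y$. Finally $A/\mathfrak m\cong k$ is semisimple, so $J(A)\subseteq\mathfrak m$ unconditionally. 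Thus the assertion amounts to showing that $\mathfrak m\subseteq J(A)$ if and only if $X$ is nilpotent in $A$.

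For the ``if'' direction, assume $X$ is nilpotent in $A$. By Corollary~\ref{c2.2} we have $Y\in J(A)$, and by Corollary~\ref{c2.1} (all nilpotent elements of $A$ lie in the radical) also $X\in J(A)$. Since $J(A)$ is a two-sided ideal containing the two generators $X,Y$ of $\mathfrak m$, it follows that $\mathfrak m\subseteq J(A)$, hence $J(A)=\mathfrak m$. For the ``only if'' direction, if $J(A)=\mathfrak m$ then $X\in\mathfrak m=J(A)$, and as $A$ is finite dimensional its radical is nilpotent, so $X$ is nilpotent in $A$.

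There is essentially no hard step here: the only point requiring a little care is the identification of ``polynomials without constant term'' with the genuine two-sided ideal $\mathfrak m=\ker\varepsilon$, together with the routine inclusion $J(A)\subseteq\mathfrak m$ coming from $A/\mathfrak m\cong k$; once these are in place, Corollaries~\ref{c2.1} and~\ref{c2.2} and the nilpotence of the radical of a finite-dimensional algebra finish the argument immediately.
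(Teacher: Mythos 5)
Your proposal is correct, and it takes a genuinely cleaner route than the paper's. The paper's proof is computational: it directly verifies, using the relation $XY-YX=Y^2$, that any polynomial $p(X,Y)$ with $p(0,0)=0$ satisfies $p(X,Y)^{2N}=0$ once $X^N=0$, and then invokes Lemma~\ref{l2.3} to place it in the radical. You instead use only that $X$ and $Y$ each lie in $J(A)$ (by Corollary~\ref{c2.1} and Corollary~\ref{c2.2}) and then exploit the fact that $J(A)$ is a two-sided ideal: since the set $\mathfrak m$ of ``polynomials without constant term'' is precisely the two-sided ideal of $A$ generated by $X$ and $Y$, the inclusion $\mathfrak m\subseteq J(A)$ follows immediately, with no explicit estimate on powers. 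This structural argument avoids the combinatorial check the paper hides in ``an easy check,'' and would generalize to any image algebra in which nilpotents are known to lie in the radical.

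One small inaccuracy in your write-up is worth flagging, though it does not affect the validity of the argument. You assert that the map $x,y\mapsto 0$ induces a homomorphism $\varepsilon\colon A\to k$ on the grounds that the defining relation of $R$ has zero constant term. That only shows $\varepsilon$ is well defined on $R$, not on the quotient $A=R/\ker\rho$: the kernel of $\rho$ may contain relations with nonzero constant term (this happens exactly when $0\notin\operatorname{Spec}X$; for example $X=I,\,Y=0$ gives $x-1\in\ker\rho$). What is true unconditionally, and what you actually use, is only the set-theoretic description: $\mathfrak m=\{p(X,Y):p(0,0)=0\}$ is the two-sided ideal of $A$ generated by $X$ and $Y$, so that $A/\mathfrak m$ is a quotient of $k$, hence either $k$ or $0$; in either case it is semisimple and $J(A)\subseteq\mathfrak m$. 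The ``if'' direction presupposes $X$ nilpotent, which forces $\mathfrak m\subseteq J(A)\neq A$ and so $\varepsilon$ does exist there; the ``only if'' direction uses only $X\in\mathfrak m$, which is automatic. So the proof stands, but the sentence asserting that $\varepsilon$ always descends to $A$ should be replaced by the remark that $\mathfrak m$ is the two-sided ideal generated by $X$ and $Y$ and $A/\mathfrak m$ is a quotient of $k$.
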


\proof In one direction this is trivial. We should ensure   only
that if $X^N=0$ then $p(X,Y)^{2N}=0$ for any polynomial $p$ such
that $p(0,0)=0$, using the relation $XY-YX=Y^2$, which is an easy
check.$\Box$

\begin{lemma}\label{t2.1} Let $A_{\rho,n}$ be  as above,
and $X=\rho_n(x)$, $Y=\rho_n(y)$ be its generators. Then
$A/J$ is a commutative one-generated ring $\k[x]/S(x)$, where
$S(x)=(x-\lambda_1)\dots(x-\lambda_k)$, and
$\lambda_1,\dots,\lambda_k$ are all different eigenvalues of the
matrix $X$. \end{lemma}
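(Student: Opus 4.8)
The plan is to compute $A/J$ directly, using Corollary \ref{c2.1} to identify $J$ with the set of all nilpotent elements of $A$. First I would recall that $A=\rho(R)$ is a finite-dimensional basic algebra generated by $X=\rho(x)$ and $Y=\rho(y)$, so every element of $A$ is a (noncommutative) polynomial in $X$ and $Y$. The key structural input is Corollary \ref{c2.2}: $Y\in J(A)$. Therefore in the quotient $A/J$ the image of $Y$ is zero, and $A/J$ is generated by the single element $\bar X$, the image of $X$. In particular $A/J$ is a commutative, one-generated (hence finite-dimensional) algebra, so it is a quotient $\k[x]/(f(x))$ for a unique monic polynomial $f$; it remains to identify $f$ with $S(x)=(x-\lambda_1)\dots(x-\lambda_k)$.

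To pin down $f$, I would argue by a double inclusion of ideals, or equivalently by comparing the two polynomials. On one hand, by Lemma \ref{l2.2} the matrix $S(X)=(X-\lambda_1 I)\dots(X-\lambda_k I)$ is nilpotent, hence $S(X)\in J$ by Corollary \ref{c2.1}; so $S(\bar X)=0$ in $A/J$, which forces $f\mid S$ in $\k[x]$ (as $S$ annihilates $\bar X$ and $f$ is the minimal polynomial of $\bar X$ in the commutative algebra $A/J$). On the other hand, $S$ is squarefree with roots exactly the distinct eigenvalues $\lambda_1,\dots,\lambda_k$ of $X$, so any proper monic divisor $f$ of $S$ would omit some $\lambda_i$; I must rule this out. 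For that, note $A/J$ is semisimple commutative, so it is a product of copies of $\k$; the characters of $A/J$ are exactly the ring homomorphisms $A\to\k$ that kill $J$. Any such character sends $X$ to one of its eigenvalues, and conversely I would show each eigenvalue $\lambda_i$ of $X$ is hit: restricting $\rho$ to a Jordan-type generalized eigenblock of $X$ — more precisely, using that $X$ has $\lambda_i$ as an eigenvalue, there is a simple $A$-module on which $X$ acts by a scalar which must be $\lambda_i$ (since $Y$ acts as $0$ on simples, the simple quotients of the $A$-module $k^n$ are one-dimensional with $X$ acting by an eigenvalue, and by counting all $\lambda_i$ occur). Hence $A/J$ has at least $k$ distinct characters, so $\dim A/J\ge k=\deg S\ge\deg f=\dim A/J$, forcing $f=S$.

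The main obstacle is the last step: showing that every distinct eigenvalue $\lambda_i$ of $X$ actually survives as a character of $A/J$, i.e. that the minimal polynomial of $\bar X$ is not a proper divisor of $S$. This is where one genuinely uses that $Y$ is \emph{nilpotent} (Lemma \ref{l2.1}) and lies in the radical, rather than merely that $S(X)$ is nilpotent. Concretely, I would decompose $k^n$ into generalized eigenspaces of $X$; on the $\lambda_i$-generalized eigenspace $X-\lambda_i I$ is nilpotent and $Y$ maps this space — one checks using $XY-YX=Y^2$ that each generalized eigenspace of $X$ is $Y$-invariant, or at least that the filtration by these spaces is respected well enough — so there is a quotient module supported at $\lambda_i$ on which $J$ (equivalently: all nilpotents) acts trivially and $X$ acts by $\lambda_i$. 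This produces the required character $A/J\to\k$, $\bar X\mapsto\lambda_i$, completing the argument. An alternative, possibly cleaner route is to observe that $A/J$ is the image of $A$ in $\prod_i \mathrm{End}(V_i)$ over the simple $A$-modules $V_i$; each $V_i$ is one-dimensional (as $Y$ acts nilpotently and centrally-nilpotently, in fact $V_iY=0$), the action of $X$ on $V_i$ is a scalar $\mu_i$, and the set $\{\mu_i\}$ is precisely $\mathrm{Spec}\,X$ because the composition factors of $k^n$ as an $A$-module see all generalized eigenvalues of $X$. Either way, once $f=S$ is established, Lemma \ref{t2.1} follows. $\Box$
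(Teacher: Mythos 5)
Your proof is correct, and the first half matches the paper's almost verbatim: you use Corollary~\ref{c2.2} to kill $Y$ in $A/J$ and get a one-generated commutative quotient $\k[x]/(f)$, then use Lemma~\ref{l2.2} together with Corollary~\ref{c2.1} to conclude $S(\bar X)=0$, i.e.\ $f\mid S$. Where you diverge is in the reverse inclusion $S\mid f$. You argue through the representation theory of $A$: $A/J$ is a product of copies of $\k$, its characters correspond to one-dimensional simple $A$-modules (on which $Y$ acts by $0$), and you appeal to the fact that the composition factors of the faithful module $k^n$ realize every eigenvalue of $X$. The paper instead uses the Jacobson radical criterion directly and elementarily: if $p(\lambda)\neq 0$ for some eigenvalue $\lambda$ of $X$, then $1-\tfrac{1}{p(\lambda)}p(X)$ has $0$ in its spectrum and is therefore singular, hence $p(X)\notin J$; since $S$ is squarefree, any $p$ not divisible by $S$ has some $p(\lambda_i)\neq 0$, so $p\notin I$. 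This is a two-line argument that never mentions simples or composition factors.

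The trade-off: the paper's argument is shorter, self-contained, and uses nothing beyond the definition of $J$ as $\{a:1-ba\text{ invertible for all }b\}$ plus the spectral mapping theorem. Your route is conceptually illuminating (it identifies characters of $A/J$ with the scalars by which $X$ acts on composition factors) but carries a hidden burden: you must actually justify that every eigenvalue of $X$ is seen by a composition factor of $k^n$, or equivalently that the faithful module $k^n$ has all simple $A$-modules among its composition factors. That fact is true for a finite-dimensional algebra and a faithful module, but your proposal only gestures at it (``by counting all $\lambda_i$ occur'', ``see all generalized eigenvalues''), and you should also be careful that ``simple quotients'' is not the same as ``composition factors''---a non-split filtration may have only one simple quotient even though several distinct eigenvalues appear in the composition series. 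If you fill that step in carefully (e.g.\ by observing that the associated graded of a composition series gives $\operatorname{char.poly}(X)=\prod_i(t-\mu_i)$, so $\{\mu_i\}=\operatorname{Spec}X$), the argument is complete; but the paper's direct check is cleaner here.
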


\proof From Corollary \ref{c2.2} we can see that $A/J$ is an
algebra with one generator $x$: $A/J \simeq \k[x]/I$. We are going to
find now an element which generates  the ideal $I$.

First of all by lemmas \ref{l2.2} and  \ref{l2.3} $S\in J(A)$, hence
$S(x)=(x-\lambda_1) \dots (x-\lambda_r) \in I$. Let us show now that
$S$ divides any element in $I$. If some polynomial $p\in \k[x]$ does
not vanish in some eigenvalue $\lambda$ of $X$ then $p(X)\notin
J(A)$. Indeed, the matrix $p(X)$ has a non-zero eigenvalue, than
$p(\lambda) \neq 0$ and hence $I-\frac 1{p(\lambda)}p(X)$ is
non-invertible. Therefore $p(X)\notin J(A)$. Thus, $S(x)$ is the
generator of $I$. This finishes the proof. $\Box$

\begin{corollary}\label{ct2.2} The system
$e_i=p_i(X)/p_i(\ll_i)$, where
$$p_i(X)=(X-\ll_1I)\dots\widehat{(X-\ll_iI)}\dots(X-\ll_rI)$$ and
$\ll_i$ are different eigenvalues of $X=\p(x)$ is a complete
system of orthogonal idempotents of $A/J$.
\end{corollary}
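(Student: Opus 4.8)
The plan is to pass immediately to the ring $A/J$, which by Lemma~\ref{t2.1} is isomorphic to $k[x]/(S(x))$ with $S(x)=(x-\lambda_1)\cdots(x-\lambda_r)$ and the $\lambda_i$ pairwise distinct, and then to check the three defining properties of a complete system of orthogonal idempotents as identities among polynomials modulo $S(x)$ (and, in the case of completeness, as an exact polynomial identity). First I would record that $p_i(\lambda_i)=\prod_{j\ne i}(\lambda_i-\lambda_j)\ne 0$ because the eigenvalues are distinct, so each scalar $p_i(\lambda_i)$ is invertible and $e_i$ is well defined; and since by Corollary~\ref{c2.2} the algebra $A/J$ is generated by the image of $x$, each $e_i$ is genuinely an element of $A/J$, namely the class of the polynomial $p_i(x)/p_i(\lambda_i)$.

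Next I would prove orthogonality: for $i\ne j$ the product $p_i(x)p_j(x)$ is divisible by every linear factor $(x-\lambda_\ell)$, $\ell=1,\dots,r$ — if $\ell\ne i$ it already divides $p_i(x)$, and $(x-\lambda_i)$ divides $p_j(x)$ since $i\ne j$ — so $S(x)\mid p_i(x)p_j(x)$, hence $e_ie_j=0$ in $A/J$. Then I would prove completeness by Lagrange interpolation: the polynomial $q(x)=\sum_{i=1}^r e_i(x)$ has degree $\le r-1$, and evaluating at $x=\lambda_k$ annihilates every $p_i$ with $i\ne k$ while $p_k(\lambda_k)/p_k(\lambda_k)=1$, so $q(\lambda_k)=1$ for each $k$; thus $q-1$ is a polynomial of degree $\le r-1$ with $r$ distinct roots and must vanish identically, giving $\sum_i e_i=1$ already at the polynomial level, hence a fortiori in $A/J$.

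Finally, idempotency follows formally: in $A/J$ we have $e_i=e_i\cdot 1=e_i\sum_j e_j=e_i^2+\sum_{j\ne i}e_ie_j=e_i^2$, using the two facts just established. Putting these together shows $\{e_1,\dots,e_r\}$ is a complete system of orthogonal idempotents of $A/J$.

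I do not expect any real obstacle here: the statement is the classical partial-fraction / Chinese Remainder decomposition of $k[x]/(S(x))$ into the product of the fields $k[x]/(x-\lambda_i)$, with $e_i$ the preimage of the $i$-th standard idempotent, and one could equally well phrase the whole argument via that isomorphism. The only point requiring a little care is to verify each identity first at the level of polynomials (exactly for $\sum e_i=1$, and modulo $S(x)$ for $e_ie_j=0$ and $e_i^2=e_i$) before transporting it to $A/J$ through the isomorphism of Lemma~\ref{t2.1}.
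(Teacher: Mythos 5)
Your proof is correct and follows essentially the same route as the paper, which simply cites the presentation $A/J\cong k[x]/(S(x))$ from Lemma~\ref{t2.1} and declares orthogonality (and the rest) clear. You have supplied the details the paper leaves implicit -- the divisibility argument for $e_ie_j=0$, the Lagrange-interpolation argument for $\sum e_i=1$, and the formal derivation of $e_i^2=e_i$ -- all of which are sound.
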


\proof Orthogonality of $e_i$ is clear from the presentation of
$A/J$ as $\k[x]/{\rm id}(S)$ proven in Lemma \ref{t2.1}. $\Box$

\begin{proposition}\label{t2.3}  For any finite dimensional representation
 $\p$ the semisimple part of $A_{\p}$ is a
product of a finite number of copies of the field $\k$:
$$ A/J = \prod_{i=1}^r \k_i, $$ where $r$ is the number of
different eigenvalues of the matrix $X=\rho(x)$.\end{proposition}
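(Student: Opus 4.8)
The statement to prove is Proposition~\ref{t2.3}: $A/J \cong \prod_{i=1}^r \k$, with $r$ the number of distinct eigenvalues of $X=\rho(x)$. The plan is to combine the two structural facts already in hand. Lemma~\ref{t2.1} identifies $A/J$ with the one-generated commutative ring $\k[x]/(S(x))$, where $S(x)=(x-\ll_1)\cdots(x-\ll_r)$ and the $\ll_i$ are the distinct eigenvalues of $X$. Since $\k$ is algebraically closed, the $\ll_i$ lie in $\k$, and because they are \emph{distinct}, the linear factors $(x-\ll_i)$ are pairwise coprime in the principal ideal domain $\k[x]$. So the first main step is simply to invoke the Chinese Remainder Theorem: $\k[x]/(S(x)) \cong \prod_{i=1}^r \k[x]/(x-\ll_i)$.

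The second step is to observe that each factor $\k[x]/(x-\ll_i)$ is isomorphic to $\k$ (evaluation at $\ll_i$), so the product is $\prod_{i=1}^r \k$, which is exactly the claimed semisimple part. Concretely, the idempotents realizing this decomposition are precisely the $e_i=p_i(X)/p_i(\ll_i)$ from Corollary~\ref{ct2.2}: they are a complete orthogonal system, and $e_i A/J \cong \k$ for each $i$, giving $A/J = \bigoplus_i e_i(A/J) = \prod_i \k$. One should also note that $A/J$ is genuinely semisimple here, which is automatic since $A$ is finite dimensional (so $A/J$ is a finite-dimensional algebra with zero radical), consistent with the ``semisimple part'' terminology.

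There is essentially no obstacle: all the work was done in Lemma~\ref{t2.1} and Corollary~\ref{ct2.2}. The only point requiring a word of care is making sure the $r$ in the statement matches: $\k[x]/(S(x))$ has dimension $r$ over $\k$ exactly because $S$ has $r$ distinct roots (each with multiplicity one in $S$, by construction of $S$ from the \emph{set} of eigenvalues), so the CRT splitting has precisely $r$ factors, not more and not fewer. Thus the proof is a two-line deduction: apply CRT to the coprime factorization $S(x)=\prod(x-\ll_i)$ furnished by Lemma~\ref{t2.1}, and identify each quotient with $\k$ via the algebraically closed hypothesis.
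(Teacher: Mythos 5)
Your proposal is correct and is essentially the same argument as the paper's: both start from Lemma~\ref{t2.1}'s identification $A/J\cong\k[x]/(S)$ with $S$ having $r$ distinct roots, and both use the idempotents of Corollary~\ref{ct2.2} to exhibit the decomposition into $r$ copies of $\k$; the only difference is that you invoke the Chinese Remainder Theorem as a black box where the paper spells out the coordinate-wise multiplication in the idempotent basis directly.
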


\proof We shall construct an isomorphism between $A/J$ and $\displaystyle
\prod_{i=1}^r \k_i$ using the system $e_i$, $i=1,\dots,r$ of
idempotents from Corollary  \ref{ct2.2}. Clearly $e_i$ form a basis
of $A/J$ as a linear space over $\k$. From the presentation of $A/J$
as a quotient $\k[x]/{\rm id}(S)$ given in lemma \ref{t2.1} it is
clear that the dimension of $A/J$ is equal to the degree of
polynomial $S(x)$, which coincides with the number of different
eigenvalues of the matrix $X$. Since idempotents $e_i$ are
orthogonal, they are linearly independent and therefore form a basis
of $A/J$. The multiplication of two arbitrary elements $a,b\in A/J$,
$a=a_1e_1+{\dots}+a_r e_r$, $b=b_1e_1+{\dots}+b_r e_r$ is given by
the formula $ab=a_1b_1e_1+{\dots}+a_r b_r e_r$ due to orthogonality
of the idempotents $e_i$. Hence the map $a\mapsto(a_1,\dots,a_r)$ is
the desired isomorphism of $A/J$ and $\displaystyle \prod_{i=1}^r
\k_i$. $\Box$

Since all images are basic algebras we can associate to each of
them a {\it quiver} in a conventional way (see, for example,
\cite{Gr}, \cite{Kirich}).

The vertices will correspond to the idempotents $e_i$ or, by
Corollary \ref{ct2.2}, equivalently, to the different eigenvalues of
matrix $X$. The number of arrows from vertex $e_i$ to the vertex
$e_j$ is the ${\rm dim}_\k \, e_i(J/J^2)e_j$. There are a finite
number of such quivers in fixed dimension $n$ (the number of
vertices bounded by $n$, the number of arrows between any two
vertices roughly by $n^2$).

We can define an equivalence relation on representations of algebra
$R$ using quivers of their images.

\begin{definition} Two representations $\rho_1$ and $\rho_2$ of the
algebra $R$ are quiver-equivalent $\rho_1 \sim_{Q} \rho_2$ if
quivers associated to algebras $\rho_1(R)$ and $\rho_2(R)$ coincide.
\end{definition}

This will lead to a rough classification of representations by means of quivers of their images.

{\bf Toy example.}
As an example let us clarify the question on how many
quiver-equivalence classes appear in the family of representations
$$
{\cal U}_{(n)}=\{(X,Y)\in {\rm mod}\,(R,n)|{\rm rk}\,Y=n-1\}
$$
and which quivers are realized.

\begin{proposition}\label{t2.4}For any $n \geq 3$ families of representations ${\cal U}_{(n)}$ belong
to one quiver-equivalence class. Corresponding quiver consists of
one vertex and two loops. \end{proposition}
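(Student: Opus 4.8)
The plan is to compute, for an arbitrary representation $(X,Y)\in {\cal U}_{(n)}$, the quiver of the image algebra $A=A_{\rho,n}=\rho(R)$ and show it does not depend on the choice of such a representation. First I would record what the defining condition ${\rm rk}\,Y=n-1$ gives us: by Lemma~\ref{l2.1} the matrix $Y$ is nilpotent, so $\dim\ker Y\geq 1$; combined with ${\rm rk}\,Y=n-1$ this forces $\dim\ker Y=1$, i.e.\ $Y$ is a single nilpotent Jordan block (up to conjugation), and in particular $Y^{n-1}\neq 0$, $Y^n=0$. So I may as well assume $Y$ is the standard $n\times n$ nilpotent Jordan block.

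Next I would pin down the vertices. By Proposition~\ref{t2.3} (together with Corollary~\ref{ct2.2}) the vertices of the quiver correspond to the distinct eigenvalues of $X$. I claim that when $Y$ is a single Jordan block, $X$ has exactly one eigenvalue, so the quiver has exactly one vertex. The cleanest way to see this: the relation $[X,Y]=Y^2$ shows that the eigenspace decomposition of $X$ would have to be compatible with $Y$ in a way that is impossible unless $X$ has a single eigenvalue — more concretely, if $X$ had two distinct eigenvalues one could use an eigenprojector $P$ of $X$ and argue as in Lemma~\ref{l2.1}/Lemma~\ref{nilnil}, or one can argue directly: restricting attention to the $R$-module $k^n$, a nontrivial idempotent in $A/J$ would split $k^n$ as an $R$-module, hence split $Y$; but a single Jordan block is indecomposable, so no such splitting exists. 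Either route shows $A/J\cong k$, hence one vertex.

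For the arrows I must compute $\dim_k e(J/J^2)e$ where $e$ is the unique (identity) idempotent, i.e.\ $\dim_k J/J^2$. Here $J$ is the radical; by Corollary~\ref{radft}, since $X$ is nilpotent in $A$ (it has a single eigenvalue, which must be $0$ because... well, one can normalize; in any case $X-\lambda I$ is nilpotent and generates the same radical), $J$ consists of all polynomials in $X,Y$ without constant term, so $J/J^2$ is spanned by the images of $X$ (or $X-\lambda I$) and $Y$. Thus $\dim_k J/J^2\leq 2$, giving at most two loops. The substance of the argument — and the step I expect to be the main obstacle — is the lower bound: showing $\dim_k J/J^2\geq 2$, i.e.\ that $X$ and $Y$ are linearly independent modulo $J^2 + k\cdot 1$. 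One shows $Y\notin J^2$ and $X\notin J^2 + kY$ by a direct computation in the Jordan-block basis: $J^2$ is spanned by $X^2, XY, YX, Y^2$, and using $[X,Y]=Y^2$ one checks these all lie in the span of $\{Y^2,Y^3,\dots\}$ together with $X^2$-type terms, none of which can produce a multiple of $Y$ with the "off-by-one-diagonal" pattern of the Jordan block, nor a multiple of $X$. This is where the hypothesis $n\geq 3$ is used: for $n=2$ one has $Y^2=0$ so the relation degenerates and the count changes. Once the linear independence of $X,Y$ mod $J^2$ is established, the quiver has one vertex and exactly two loops, and since nothing in the computation depended on the particular $(X,Y)\in{\cal U}_{(n)}$ beyond $Y$ being a full Jordan block, all representations in ${\cal U}_{(n)}$ are quiver-equivalent, completing the proof. $\Box$
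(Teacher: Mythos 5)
Your overall route matches the paper's: reduce to a single full Jordan block $Y$, identify the vertices with the distinct eigenvalues of $X$, show $X$ has a unique eigenvalue, and then count loops by computing $\dim_k J/J^2$ via Corollary~\ref{radft}. The paper packages the second half in Lemma~\ref{radrsq} and gets the one-eigenvalue fact from the explicit description of $X$ in Theorem~\ref{tdm} (in the full-block stratum, $X=X_n^0+T$ with $T$ upper triangular Toeplitz, so $X$ has a constant main diagonal, hence one eigenvalue). Your computation of the lower bound $\dim J/J^2\geq 2$, using the ``off-by-one-diagonal'' pattern of $Y$ versus the arithmetic progression $\mu+1,\dots,\mu+n-1$ on the first superdiagonal of $X-\lambda I$, is a useful filling-in of a step the paper only asserts, and it correctly isolates where $n\geq 3$ enters.

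However, there is a genuine gap in one of the two justifications you offer for the one-vertex claim. The statement ``a nontrivial idempotent in $A/J$ would split $k^n$ as an $R$-module'' is not true in general: for a (lifted) idempotent $e\in A\subseteq M_n(k)$, the subspace $e(k^n)$ is $A$-stable only when $(1-e)Ae=0$, which is far from automatic in a non-commutative algebra (think of upper-triangular matrices). In the present situation one \emph{can} conclude this, but only after knowing that $Y$ preserves the main eigenspaces of $X$ --- this is exactly the content of Lemma~\ref{lam}, and its proof uses the multiplication formula $\displaystyle(x-\lambda)^ny=\sum_{k}\alpha_{k,n}y^k(x-\lambda)^{n-k+1}$, not merely the existence of idempotents. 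Your first alternative (``argue as in Lemma~\ref{l2.1}/Lemma~\ref{nilnil} with an eigenprojector of $X$'') is closer in spirit, but those two lemmas establish nilpotency of $Y$ and of $[X,Y]$ respectively, not that $X$ has a single eigenvalue; what you actually need is the argument of Lemma~\ref{lam} or, more economically, the concrete matrix form from Theorem~\ref{tdm}. With either of those substituted in, your proof is correct and essentially coincides with the paper's.
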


Let us ensure first the following lemma.

\begin{lemma}\label{radrsq}
If in the representation $\p: R \rightarrow A, \, \,\, X=\p(x)$ has
only one eigenvalue $\lambda$, then the corresponding quiver $Q_A$
has one vertex and number of loops is a dimension of the vector
space ${\rm Span}_k\{ \bar X-\lambda I, \bar Y\}$, which  does not
exceed 2. Here $\bar Y=\phi Y$ and  $\bar X=\phi X$  for $\phi:A \rightarrow A/J^2$.
\end{lemma}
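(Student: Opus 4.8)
The plan is to unwind the construction of the quiver $Q_A$ in the special case where $X=\rho(x)$ has a single eigenvalue $\lambda$, and then identify the arrow space $e_1(J/J^2)e_1$ with $\mathrm{Span}_k\{\bar X-\lambda I,\bar Y\}$ inside $A/J^2$. First I would invoke Lemma \ref{t2.1}: $A/J\simeq k[x]/S(x)$ with $S(x)=(x-\lambda_1)\cdots(x-\lambda_r)$ running over the \emph{distinct} eigenvalues of $X$; under the hypothesis there is only one, so $S(x)=x-\lambda$ and $A/J\simeq k$. Hence by Corollary \ref{ct2.2} there is a single idempotent $e_1=1$, so the quiver has exactly one vertex and all arrows are loops at that vertex. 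The number of loops is by definition $\dim_k e_1(J/J^2)e_1=\dim_k J/J^2$.

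Next I would show $J/J^2$ is spanned by the images of $X-\lambda I$ and $Y$. Since the single eigenvalue is $\lambda$, the matrix $X-\lambda I$ has spectrum $\{0\}$, so it is nilpotent, hence lies in $J$ by Lemma \ref{l2.3}; and $Y\in J$ by Corollary \ref{c2.2}. Thus $\bar X-\lambda I$ and $\bar Y$ indeed live in $J/J^2$ (here $\bar X,\bar Y$ denote the images under $\phi:A\to A/J^2$). For the spanning claim: $J$ is generated as a (two-sided, hence also one-sided) ideal by nilpotent elements, and by Corollary \ref{radft} — applicable precisely because $X-\lambda I$ is nilpotent in $A$ — the radical $J$ consists of all polynomials in $X$ and $Y$ with zero constant term, equivalently of all polynomials in $X-\lambda I$ and $Y$ with zero constant term. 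Modulo $J^2$, every monomial of degree $\geq 2$ in the generators $X-\lambda I$ and $Y$ vanishes, so the images of the two degree-one generators $\bar X-\lambda I$ and $\bar Y$ span $J/J^2$. Therefore the number of loops equals $\dim_k\mathrm{Span}_k\{\bar X-\lambda I,\bar Y\}\leq 2$.

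The main obstacle, and the step deserving care, is the passage "$J$ is generated by $X-\lambda I$ and $Y$ as polynomials without constant term $\implies$ $J/J^2$ is spanned by the two linear generators." One must be slightly careful that $J^2$ (the square of the radical as an ideal of $A$) really does kill all higher monomials: a word of length $\geq 2$ in $\{X-\lambda I, Y\}$ is a product of at least two elements of $J$, hence lies in $J^2$, and this uses only that both generators are in $J$ — which we have established. One should also note that the commutation relation $[X,Y]=Y^2$, i.e.\ $[X-\lambda I,Y]=Y^2\in J^2$, guarantees consistency (reordering generators changes a word only by something in $J^2$) but is not otherwise needed for the spanning count. This completes the argument; the bound $\leq 2$ is immediate since a span of two vectors has dimension at most $2$.
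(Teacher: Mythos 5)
Your proof is correct and follows the same route as the paper: reduce to a single idempotent (hence one vertex), identify the loop count with $\dim_k J/J^2$, and use Corollary~\ref{radft} applied to the nilpotent $X-\lambda I$ (which satisfies the same relation as $X$) to see that $J$ is generated by $X-\lambda I$ and $Y$, so $J/J^2$ is spanned by their images. The paper compresses the final spanning step into ``the result immediately follows''; you have simply filled in that detail.
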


\proof Due to the description of idempotents above, in the case of
one eigenvalue the only idempotent is unit. Hence we have to
calculate ${\rm dim}_k J/J^2$, where $J=Jac(A)$. Since $X-\lambda I$
satisfies the same relation as $X$, we could apply corollary
\ref{radft} and result immediately follows. $\Box$

\proof(of Proposition \ref{t2.4})
This will directly follow from Lemma \ref{radrsq}, after we
show in section \ref{descript} that $X$ has only one eigenvalue in
the family ${\cal U}_{(n)}$ and take into account that when we have
full block $Y$, the dimension of the linear space ${\rm Span}_k\{ \bar
X-\lambda I, \bar Y\}$ can not be smaller then 2. $\Box$

\section{Jordan Calculus}

Here we shall prove lemmata containing  formulas for multiplication in the canonical linear basis of the Jordan plane, it will be on
use for various purposes later on.

\subsection{Gr\"obner basis of the ideal and a linear basis of algebra}

 The basis of our algebra as a vector space over $k$ consists of the
monomials $y^kx^l$, $k,l=0,1,\dots$. Let us ensure this in a canonical way.

For this we
remind  the definition of a Gr\"obner  basis of an ideal and the
method of construction of a linear basis of an algebra given by
relations, based on the Gr\"obner basis technique.
Using this
canonical method it could be easily shown that, for example, some
Sklyanin algebras enjoys a PBW property. This was proved in
\cite{od}, the arguments there are very  interesting
in their own right, but quite involved.

Let $A=k\langle X\rangle/I$. The first essential step is to fix an
ordering on the semigroup $\ss=\langle X \rangle$. We fix some
linear ordering in the set of variables $X$ and extend it to an
{\it admissible} ordering on $\ss$, i.e. extend it in a way to satisfy the
conditions:

1) if  $u,v,w \in \ss$ and $ u<v$ then $uw<vw$ and $wu<wv$;

2) the descending chain condition (d.c.c.):
 there is no infinite properly
descending chain of elements of $\ss$.

We shall use the {\it deg\-ree-lex\-i\-c\-o\-g\-r\-a\-p\-h\-i\-cal}
ordering in the semigroup $\ss$, namely  for arbitrary $ \, u=
x_{i_1}\ldots x_{i_n}, v= x_{j_1}\ldots x_{j_k}\in\ss \quad \text{we
say} \quad
 u>v, \, \text{when either}\quad$ $\deg\, u > \deg\, v
$ $ \quad \text{or} \quad   \deg\, u =  \deg\, v \quad \text{and for
some} \,\, l: \,\, x_{i_l}>x_{j_l} \quad \text{and}\quad
x_{i_m}=x_{j_m}  \, \text{for any} \,\,  m<l.$
  This ordering is admissible.

Denote by $\bar f$ the highest term of polynomial $ f\in A=k\langle
X\rangle$ with respect to the above order.

{\bf Definition 4.2.} Subset $G \in I, I \triangleleft k \langle X
\rangle$ is a {\it Gr\"obner basis} of an ideal $I$ if the set of
highest terms of elements of $G$ generates the ideal of highest
terms of $I: id\{\bar G\} = \bar I$.

{\bf Definition 4.3.} We say that a monomial $u \in \langle X
\rangle$ is {\it normal} if it does not contain as a submonomial any
highest term of an element of the ideal I.

From these two definitions it is clear that normal monomial is a
monomial which does not contain  any highest term of an element of
Gr\"obner basis of the ideal $I$. If Gr\"obner basis turns out to
be finite then the set of normal words is constructible.

In the case when an ideal $I$ of defining relations for $A$ has a finite
Gr\"obner basis, the algebra called {\it standardly  finitely
presented(s.f.p.)}.

It is an easy, but useful fact that $\langle X \rangle$ is isomorphic
to the direct sum $I \oplus {{\langle N \rangle}_{k}} $ as a linear
space over $k$, where $\langle N \rangle_{k}$ is the linear span of
the set of normal monomials from $\lxr$ with respect to the ideal
$I$.  Hence given a Gr\"obner
basis $G$ of an ideal $I$, we can construct a linear basis of an
algebra $A=\langle X \rangle / I$ as a set of normal (with respect
to $I$) monomials, at least in case when $A$ is s.f.p.

As a consequence we immediately get the following

\begin{lemma}\label{l4.1}  The system of monomials $y^n x^m$ form a basis of
algebra $R$ as a vector space over $k$.\end{lemma}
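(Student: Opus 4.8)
The plan is to apply the Gröbner basis machinery recalled in the section directly to the defining ideal $I = (xy - yx - y^2)$ of the Jordan plane $R = k\langle x,y\rangle / I$. First I would fix the degree-lexicographical ordering on $\langle x,y\rangle$ induced by declaring $x > y$ (so that in the relation $xy - yx - y^2$ the leading term is $xy$, since all three monomials have degree $2$ and $xy > yx > y^2$ in deg-lex). Rewriting the relation as $xy = yx + y^2$, we obtain a rewriting rule that replaces any occurrence of $xy$ by $yx + y^2$, i.e. pushes $x$'s to the right past $y$'s at the cost of raising the $y$-degree. The candidate set of normal monomials with respect to the single leading term $xy$ is precisely $\{\, y^m x^l : m,l \geq 0 \,\}$, since a monomial is normal exactly when it contains no submonomial $xy$, which forces all $y$'s to precede all $x$'s.

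The key step is to verify that the single element $g = xy - yx - y^2$ already constitutes a Gröbner basis of $I$, so that no new relations (obstructions) arise. By the Diamond Lemma / Bergman's composition criterion, it suffices to check that all overlap ambiguities of the leading word $\overline{g} = xy$ with itself resolve to zero. The word $xy$ has no self-overlap (the suffix $y$ of $xy$ is not a prefix of $xy$, and the prefix $x$ is not a suffix of $xy$), so there are no inclusion or overlap ambiguities at all. Hence $\{g\}$ is trivially a Gröbner basis, $\bar I = \mathrm{id}(xy)$, and by the general fact recalled before the lemma — that $\langle X\rangle \cong I \oplus \langle N\rangle_k$ as $k$-spaces, with $\langle N\rangle_k$ spanned by the normal monomials — the normal monomials $y^m x^l$ descend to a $k$-basis of $R = \langle X\rangle / I$.

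I do not anticipate a serious obstacle here: the statement is essentially the PBW property for the Jordan plane, and with a single quadratic relation whose leading monomial $xy$ admits no self-overlaps, confluence is automatic. The only point requiring minor care is bookkeeping in the degree-lex ordering to confirm that $xy$ (and not $yx$ or $y^2$) is the leading term under the chosen variable order $x > y$, and to note that the rewriting process terminates: each application of $xy \mapsto yx + y^2$ strictly decreases, in the deg-lex order, the leading monomial of the polynomial being reduced (the total degree is preserved and the number of inversions, i.e. pairs with an $x$ left of a $y$, strictly drops), so the descending chain condition guarantees that every element reduces to a unique normal form, a $k$-linear combination of the $y^m x^l$. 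This establishes both linear independence (any relation among the $y^m x^l$ would lie in $I$ yet be in normal form, hence zero) and spanning, completing the proof. $\Box$
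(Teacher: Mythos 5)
Your proof is correct and follows exactly the paper's route: fix the deg-lex order with $x>y$, identify $xy$ as the leading term of the single relation, observe it has no self-overlaps, and invoke Bergman's Diamond Lemma to conclude the relation is a Gröbner basis, so the normal monomials $y^m x^l$ form a basis. The extra remarks on termination and uniqueness of normal forms are fine but already built into the machinery the paper cites.
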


\proof With respect to the ordering $x > y$ on the set of generators, and corresponding degree-lexicographic ordering on monomials $\langle X \rangle$, the relation $xy-yx-y^2$ forms a Gr\"obner basis of the ideal generated by itself.
This follows from the Bergman Diamond Lemma \cite{Berg}, which says, that the set of generators of an ideal form a Gr\"obner basis if all  ambiguities between highest terms of this set are solvable. We say that two monomials $u,v \in \langle X \rangle$ form an {\it ambiguity}, if there exists a monomial $w \in  \langle X \rangle$, such that $w=au=bv$ for some $a,b \in \langle X \rangle$. In our case the only highest term, the monomial $xy$ does not form any ambiguity with itself, so the set of ambiguities is empty.

\subsection{Multiplication formulas in the Jordan plane}\label{mult}

We say that an element is in the {\it normal form}, if
it is presented as a linear combination of normal monomials.

After we have a linear basis of normal monomials we should know how
to multiply them to get again an element in the normal form.

Now we are going to prove  the following lemmata, where we express
precisely the normal forms of some products.

\begin{lemma}\label{l3.1.1} The normal form of the monomial $xy^n$ in algebra $R$ is the following:

  $xy^n=y^nx+ny^{n+1}$.\end{lemma}

\proof This can be proven by induction on $n$. The case $n=1$ is
just the defining relation of our algebra. Suppose $n>1$ and the equality
$xy^{n-1}=y^{n-1}x+(n-1)y^{n}$ holds. Multiplying it by $y$ from
the right and reducing the result  by the relation $xy-yx=y^2$, we obtain
$$
xy^n=y^{n-1}xy+(n-1)y^{n+1}=y^nx+y^{n+1}+(n-1)y^{n+1}=y^nx+ny^{n+1}.
$$
The proof is now complete.
$\Box$

\begin{lemma}\label{l3.1.2}
 The normal form of the monomial $x^ny$ in algebra $R$ is the following:

$x^ny=\sum\limits_{k=1}^{n+1}\alpha_{k,n}y^kx^{n-k+1}$, where
$\alpha_{k,n}= n! / (n-k+1)! $ for $ k=1,...,n+1$.
\end{lemma}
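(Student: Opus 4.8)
The natural approach is induction on $n$, using Lemma~\ref{l3.1.1} to push a single $x$ past a power of $y$. The base case $n=1$ is the defining relation $xy=yx+y^2$, which matches the claimed formula since $\alpha_{1,1}=1!/1!=1$ and $\alpha_{2,1}=1!/0!=1$.

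For the inductive step, I would write $x^{n+1}y = x\,(x^{n}y)$ and substitute the inductive hypothesis $x^{n}y=\sum_{k=1}^{n+1}\alpha_{k,n}\,y^{k}x^{n-k+1}$. Applying Lemma~\ref{l3.1.1} in the form $xy^{k}=y^{k}x+k\,y^{k+1}$ to each summand gives
$$
x^{n+1}y=\sum_{k=1}^{n+1}\alpha_{k,n}\,y^{k}x^{n-k+2}
+\sum_{k=1}^{n+1}k\,\alpha_{k,n}\,y^{k+1}x^{n-k+1}.
$$
Reindexing the second sum by $j=k+1$ and collecting the coefficient of $y^{k}x^{(n+1)-k+1}$, one reads off the recurrence
$$
\alpha_{k,n+1}=\alpha_{k,n}+(k-1)\,\alpha_{k-1,n},
$$
with the conventions $\alpha_{0,n}=0$ and $\alpha_{n+2,n}=0$ taking care of the two extreme indices $k=1$ and $k=n+2$.

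It then remains to check that $\alpha_{k,n}=n!/(n-k+1)!$ satisfies this recurrence. This is a short, purely arithmetic verification: the right-hand side equals
$$
\frac{n!}{(n-k+1)!}+(k-1)\frac{n!}{(n-k+2)!}
=\frac{n!\bigl((n-k+2)+(k-1)\bigr)}{(n-k+2)!}
=\frac{(n+1)!}{(n-k+2)!}=\alpha_{k,n+1},
$$
and the boundary cases ($k=1$ giving $\alpha_{1,n+1}=1=\alpha_{1,n}$, and $k=n+2$ giving $\alpha_{n+2,n+1}=(n+1)!=(n+1)\alpha_{n+1,n}$) match as well. This closes the induction.

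There is no deep obstacle here; the only point requiring a little care is the bookkeeping of indices in the reindexing step and making sure the extreme terms $k=1$ and $k=n+2$ are handled by the stated conventions rather than by the general recurrence. (Alternatively, one could verify the formula directly by realizing $R$ on $k[t]$ via $x\mapsto t\,\partial_t$, $y\mapsto$ multiplication by... but the inductive argument above is cleaner and self-contained, relying only on Lemma~\ref{l3.1.1}.)
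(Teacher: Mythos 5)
Your proof is correct and follows essentially the same route as the paper: induction on $n$ via $x^{n+1}y = x\,(x^{n}y)$, an application of Lemma~\ref{l3.1.1}, reindexing to obtain the recurrence $\alpha_{k,n+1}=\alpha_{k,n}+(k-1)\alpha_{k-1,n}$, and then a second short induction to verify the closed form $\alpha_{k,n}=n!/(n-k+1)!$. The bookkeeping, conventions for the boundary indices, and arithmetic check all match the paper's argument.
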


\proof  We are going to prove this formula inductively using the
previous lemma. As a matter of fact we shall obtain recurrent
formulas for $\alpha_{k,n}$. In the case $n=1$ the relation
$xy-yx=y^2$ implies the desired formula with
$\alpha_{1,1}=\alpha_{2,1}=1$. Suppose $n$ is a positive integer and
there exist positive integers $\alpha_{k,n}$, $k=1,\dots,n+1$ such
that $x^ny=\sum\limits_{k=1}^{n+1}\alpha_{k,n}y^kx^{n-k+1}$.
Multiplying the latter equality by $x$ from the left and using lemma
\ref{l3.1.1} we obtain
$$
x^{n+1}y=\sum_{k=1}^{n+1}\alpha_{k,n}xy^kx^{n-k+1}=
\sum_{k=1}^{n+1}\alpha_{k,n}y^kx^{n-k+2}+
\sum_{k=1}^{n+1}\alpha_{k,n}ky^{k+1}x^{n-k+1}.
$$
Rewriting the second term as
$\sum\limits_{k=1}^{n+2}\alpha_{k-1,n}(k-1)y^{k}x^{n-k+2}$ (here
we assume that $\alpha_{0,n}=0$), we arrive to
$$
x^{n+1}y=\sum_{k=1}^{n+2}\alpha_{k,n+1}y^kx^{n-k+2},
$$
where $\alpha_{k,n+1}=\alpha_{k,n}+(k-1)\alpha_{k-1,n}$ for
$k=1,\dots,n+1$ and $\alpha_{n+2,n+1}=(n+1)\alpha_{n+1,n}$.

Let us prove now the formula for $\alpha_{k,n}$. For $n=1$ it is
true since $\alpha_{1,1}=\alpha_{1,2}=1$. Then we use  inductive
argument. Suppose the formula is true for $n$. We are going to apply
the recurrent formula appeared above:
$$ \alpha_{k,n+1}=\alpha_{k,n}+(k-1)\alpha_{k-1,n}=
\frac{n!}{(n-k+1)!}+(k-1)\frac{n!}{(n-k+2)!}=
%\frac{n!}{(n-k+2)!}(n-k+2+k-1)
\frac{(n+1)!}{(n-k+2)!} $$ and the formula is verified for $1\leq
k\leq n+1$.  For $k=n+2$, we have
$\alpha_{n+2,n+1}=(n+1)\alpha_{n+1,n}=(n+1)n!=(n+1)!$. This
completes the proof. $\Box$

\subsection{Koszulity of the Jordan plane}\label{Kosz}

In the section \ref{mult} we have seen that relation $xy-yx-y^2$ forms a Gr\"obner basis with respect to the ordering $x>y$. This means, we can apply to this presentation of the algebra the Priddy criterion of Koszulity \cite{Priddy},
which asserts that if an algebra has a presentation by a quadratic Gr\"obner basis, then it is Koszul.
This leads us to
\begin{proposition}
The Jordan plane $R=k\langle x,y \, | \, xy-yx-y^2 \rangle$ is Koszul.
\end{proposition}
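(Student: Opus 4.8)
The plan is to invoke the Priddy criterion~\cite{Priddy}, all of whose hypotheses have in fact already been verified in the preceding subsections. First I would record that the presentation $R=k\langle x,y\rangle/(xy-yx-y^2)$ is \emph{quadratic}: the single defining relator $xy-yx-y^2$ is homogeneous of degree $2$, so $R$ is the quotient of the free (tensor) algebra on $x,y$ by an ideal generated in degree $2$. This is exactly the setting in which the notion of Koszulity is meaningful, and the only structural fact about the relator that matters for the argument.

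Next I would recall from Lemma~\ref{l4.1} (and the discussion in Section~\ref{mult}) that, with respect to the degree-lexicographic order determined by $x>y$, the one-element set $\{xy-yx-y^2\}$ is already a Gr\"obner basis of the ideal it generates: its leading monomial is $xy$, the monomial $xy$ has no self-overlap, hence there are no ambiguities to resolve, and the Bergman Diamond Lemma~\cite{Berg} applies with nothing further to adjoin. Equivalently, the normal monomials $y^nx^m$ form a PBW-type linear basis of $R$. In particular the Gr\"obner basis is \emph{quadratic}, since the sole element has degree $2$.

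The remaining step is then purely a citation: the Priddy criterion states that any quadratic algebra admitting a presentation by a quadratic Gr\"obner basis (a PBW algebra) is Koszul, and applying it to $R$ yields the proposition. I would phrase the proof in exactly this order — quadratic presentation, quadratic Gr\"obner basis from Section~\ref{mult}, Priddy — so that the Koszulity statement becomes an immediate corollary of the Gr\"obner-basis computation already performed.

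There is essentially no obstacle here; the only substantive work, namely producing the Gr\"obner basis and checking that it consists of quadratic elements, was done in Section~\ref{mult}, and the ambiguity check there was vacuous. The single point that deserves a line of justification is that the chosen admissible order really does leave $\{xy-yx-y^2\}$ self-reduced with a degree-$2$ leading term, which is immediate because $xy>yx$ and $xy>y^2$ in degree-lex with $x>y$, so no term of the relator other than $xy$ can become a leading monomial.
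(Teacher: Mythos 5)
Your proposal is correct and follows exactly the paper's own argument: observe that $\{xy-yx-y^2\}$ is a quadratic Gr\"obner basis under the degree-lex order with $x>y$ (the leading monomial $xy$ has no self-overlap, so the Diamond Lemma applies vacuously), and then invoke Priddy's criterion that a presentation by a quadratic Gr\"obner basis implies Koszulity. The only thing the paper adds is a remark that the same conclusion will later follow from the NCCI property in Section~\ref{NCCI}, but the constructive route you take is precisely the one the paper uses for this proposition.
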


In section \ref{NCCI} this fact will follow also from more general statement related to the properties of noncommutative Koszul (Golod-Shafarevich) complex, but this argument provides a simple constructive and direct proof of Koszulity.

\section{Automorphisms of the Jordan plane}\label{aut}

In this section  we intend to describe  the group of automorphisms
of the Jordan plane $R$ in order to use this later on for
constructing examples of tame up to automorphism strata.  The
automorphism group turns out to be quite small, compared with
automorphisms of the first Weyl algebra $A_1$, which contains $R$ as
a subalgebra. Automorphisms of the $A_1$  were described in
\cite{ML}, the case of an arbitrary Weyl algebra $A_n$ was discussed
in \cite{MK}.

 We  are going to prove.

\begin{theorem}\label{taut}
 All automorphisms of $R=k\langle x,y|xy-yx=y^2\rangle$
are of the form $x\mapsto \alpha x+p(y)$, $y\mapsto \alpha y$, where
$\alpha\in k\setminus\{0\}$ and $p\in k[y]$ is a polynomial on $y$.
Hence the group of automorphisms isomorphic to a semidirect product
of an additive group of polynomials $k[y]$ and a multiplicative
group of the field  $k^*: \,\,{\rm Aut}(R) \simeq k[y]
\leftthreetimes k^*$.
\end{theorem}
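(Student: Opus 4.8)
The plan is to determine all automorphisms of $R = k\langle x,y \mid xy-yx=y^2\rangle$ by exploiting the two structural invariants we have already established: the Jordan calculus multiplication formulas (Lemmas \ref{l3.1.1}--\ref{l3.1.2} and the linear basis $y^kx^l$ from Lemma \ref{l4.1}) and, crucially, the behaviour of the relation under the commutator. First I would observe that any algebra endomorphism $\phi$ is determined by the images $\phi(x) = u$, $\phi(y) = v$, which must satisfy $uv - vu = v^2$; for $\phi$ to be an automorphism these must generate $R$. Writing $u$ and $v$ in normal form $\sum a_{kl} y^k x^l$, I would pass to the associated graded picture: $R$ is graded by total degree in $x,y$ (the relation $xy-yx-y^2$ is homogeneous of degree $2$), so the top-degree components $\bar u, \bar v$ of $u,v$ must again satisfy $\bar u\bar v - \bar v\bar u = \bar v^2$ in the associated graded ring, and for $\phi$ to be surjective $\bar u,\bar v$ must still generate. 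Since a generating set must hit degree $1$, a short argument forces both $u$ and $v$ to have degree exactly $1$, i.e. $u = \alpha x + \beta y + (\text{const})$, $v = \gamma x + \delta y + (\text{const})$ with the linear parts $(\alpha x+\beta y,\gamma x+\delta y)$ themselves a generating (hence invertible) linear change of variables.

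Next I would pin down the linear part. Substituting $u_0 = \alpha x + \beta y$, $v_0 = \gamma x + \delta y$ into $uv-vu = v^2$ and collecting the degree-$2$ terms, the left side $u_0v_0 - v_0 u_0$ is a scalar multiple of the commutator $[x,y] = y^2$, namely $(\alpha\delta - \beta\gamma)\,y^2$, while the right side $v_0^2$ has leading expansion $\gamma^2 x^2 + \gamma\delta(xy+yx)+\delta^2y^2$ plus lower corrections coming from reordering $xy$. Matching coefficients in the $y^kx^l$ basis forces $\gamma = 0$ (no $x^2$ term can appear on the left), hence $v = \delta y + (\text{const})$ and then $(\alpha\delta-\beta\gamma)y^2 = (\alpha\delta)y^2$ must equal $\delta^2 y^2$, giving $\alpha = \delta$. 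Invertibility of the linear change then gives $\alpha = \delta \neq 0$. So far $\phi(y) = \alpha y + c$ and $\phi(x) = \alpha x + \beta y + c'$ for constants $c,c'$.

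To eliminate the constants I would plug these back into the full relation $\phi(x)\phi(y) - \phi(y)\phi(x) = \phi(y)^2$ and expand. The commutator kills all constants: $[\alpha x + \beta y + c',\, \alpha y + c] = \alpha^2[x,y] = \alpha^2 y^2$. The right side is $(\alpha y + c)^2 = \alpha^2 y^2 + 2\alpha c\, y + c^2$. Comparing coefficients of $y$ and of $1$ forces $c = 0$ (using $\alpha \neq 0$ and $\mathrm{char}\,k = 0$), so $\phi(y) = \alpha y$ exactly. With $c=0$, the term $\beta y$ in $\phi(x)$ contributes nothing to the relation either (it commutes suitably), so $\phi(x) = \alpha x + \beta y$ is consistent for every $\beta$; more generally the same computation shows $\phi(x) = \alpha x + p(y)$ works for any polynomial $p \in k[y]$, since $[p(y), \alpha y] = 0$ and $p(y)$ does not disturb $[\alpha x, \alpha y] = \alpha^2 y^2$. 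Conversely each such pair clearly defines an endomorphism, and it is invertible with inverse $x \mapsto \alpha^{-1}x - \alpha^{-1}p(\alpha^{-1}y)$, $y\mapsto \alpha^{-1}y$, so these are exactly the automorphisms. The semidirect product description ${\rm Aut}(R)\simeq k[y]\rtimes k^*$ then follows by tracking how composition acts: the multiplicative part $\alpha$ acts on the additive group $k[y]$ by the substitution $p(y)\mapsto p(\alpha y)$ together with an overall scaling, which I would spell out in one line.

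The main obstacle, and the place where care is genuinely needed, is the very first step: ruling out higher-degree terms in $\phi(x)$ and $\phi(y)$. A sloppy "count degrees" argument is not quite enough because $R$ is noncommutative and the associated graded algebra must be identified correctly — one has to know that $\mathrm{gr}\,R$ is again (isomorphic to) the Jordan plane, or at least that it is a domain with the same Hilbert series, which follows from Koszulity / the PBW basis $y^kx^l$ already proved. Once one is allowed to work in the graded setting and knows a generating set must contain two elements whose degree-$1$ parts are linearly independent, the rest is the bounded, finite linear-algebra computation sketched above. I would therefore front-load the argument with a clean statement that $\mathrm{gr}\,R \cong R$ via the Gröbner basis of Lemma \ref{l4.1}, and then let the degree count do its work.
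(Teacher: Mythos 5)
Your proof has a genuine gap at its first and decisive step: the claim that both $u=\phi(x)$ and $v=\phi(y)$ must have total degree exactly $1$. This is false, and the theorem's own statement shows it is false, since $\phi(x)=\alpha x+p(y)$ is allowed with $p\in k[y]$ of arbitrary degree. The associated-graded argument you invoke does not deliver this reduction because an automorphism of $R$ need \emph{not} preserve the degree filtration: $\phi(x)=\alpha x+y^{m}$ sends $F_1$ outside $F_1$. Consequently there is no induced graded automorphism, and the leading terms $\bar u,\bar v$ need not generate $\mathrm{gr}\,R$ at all --- for $\phi(x)=\alpha x+y^{m}$, $\phi(y)=\alpha y$, the pair $(\bar u,\bar v)=(y^{m},\alpha y)$ generates only $k[y]$. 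You actually notice this tension at the end, where you conclude that $\phi(x)=\alpha x+p(y)$ works for any $p$, which flatly contradicts your earlier claim that $u$ has degree $1$; the later computation is correct, the earlier reduction is not, and without it there is no control over the possible shapes of $u$ and $v$.

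What is missing is a structural invariant that an arbitrary automorphism must preserve and that pins down $\phi(y)$ without bounding the total degree of $\phi(x)$. The paper supplies exactly this: the two-sided ideal generated by $y^2$ is the \emph{smallest} ideal with commutative quotient (if $R/I$ is commutative then $y^2=xy-yx\in I$), hence is $\mathrm{Aut}(R)$-invariant. Combined with the fact that this two-sided ideal equals the right (or left) ideal $y^2R$ (a consequence of the reordering formulas, Lemma~\ref{l3.1.2}), one obtains $y^2=\widetilde y^{\,2}u$, and only \emph{now} does a degree comparison in the PBW basis force $\deg\widetilde y=1$ and $u\in k^*$. This yields $\phi(y)=\beta y$. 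The constraint on $\phi(x)$ is then extracted from surjectivity by comparing \emph{$x$-degrees only}, not total degree: writing $x=\sum p_r(y)\widetilde x^{\,r}$ forces $\deg_x\widetilde x=1$, giving $\widetilde x=q_0(y)+q_1(y)x$ and then $q_1\in k^*$, with the relation finally forcing $q_1=\beta$. Your verification that $\phi(x)=\alpha x+p(y)$, $\phi(y)=\alpha y$ does define an automorphism, and the semidirect-product description of $\mathrm{Aut}(R)$, are both fine; it is the uniqueness direction that needs the invariant-ideal argument (or an equivalent substitute) rather than a degree-filtration argument, which cannot work here.
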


\proof Key observation for this proof is that in our algebra there
exists the minimal ideal with commutative quotient. Namely, the
two-sided ideal $J$ generated by $y^2$.

\begin{lemma}\label{lcommq} If the quotient $R/I$ is commutative then $y^2\in I$.  \end{lemma}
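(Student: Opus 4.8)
The plan is to show that if $R/I$ is commutative, then the image of the commutator $[x,y]=xy-yx$ must vanish in $R/I$, and then to observe that the defining relation forces this commutator to equal $y^2$, so that $y^2\in I$. Concretely, in $R$ we have $xy-yx=y^2$. Passing to the quotient $R/I$, commutativity means $xy-yx\in I$. But $xy-yx=y^2$ already as an identity in $R$ (it is the defining relation, not merely a consequence), hence $y^2\in I$ directly. The only thing that needs a word of justification is that the element ``$y^2$'' appearing in the relation is literally the same element as the commutator, which is immediate from the presentation $R=k\langle x,y\rangle/(xy-yx-y^2)$.

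The first step I would carry out is to make precise what is being claimed: the ideal $J=\mathrm{id}_R(y^2)$ is the \emph{smallest} two-sided ideal of $R$ with commutative quotient, i.e. $R/J$ is commutative (indeed $R/J\cong k[x,y]$ since killing $y^2$ collapses the relation to $xy=yx$), and any $I$ with $R/I$ commutative contains $J$. The lemma as stated is exactly the second half of this. So the proof is the one-line argument above: commutativity of $R/I$ gives $xy-yx\in I$, and $xy-yx=y^2$ in $R$, whence $y^2\in I$, and therefore $J=\mathrm{id}_R(y^2)\subseteq I$.

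There is essentially no obstacle here; the content is conceptual rather than computational. The one subtlety worth flagging — and the only place where the Jordan plane's specific structure enters — is that this works precisely because the single defining relation expresses the commutator $[x,y]$ as a \emph{polynomial in $y$ alone} (here $y^2$), so that forcing the commutator into $I$ forces a non-trivial element of $k[y]$ into $I$. For the quantum plane $k\langle x,y\rangle/(xy-qyx)$ with $q\neq 1$ the analogous statement would instead force $(1-q)xy\in I$; the point specific to $R$ is that we land inside $k[y]$, which is what makes $y^2$ (and hence the ideal it generates) the canonical obstruction to commutativity that the surrounding proof of Theorem \ref{taut} exploits.
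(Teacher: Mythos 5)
Your proof is correct and is essentially the paper's own argument: commutativity of $R/I$ forces $xy-yx\in I$, and since $xy-yx=y^2$ in $R$ by the defining relation, $y^2\in I$. The additional remarks about minimality of $\mathrm{id}(y^2)$ and the contrast with the quantum plane are accurate context but not needed for the lemma itself.
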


\proof The images of $x$ and $y$ in this quotient commute. Hence
$$
I=(x+I)(y+I)-(y+I)(x+I)=xy-yx+I=y^2+I.
$$
Therefore $y^2\in I$. $\Box$

The property of an ideal to be a minimal ideal with commutative
quotient is invariant under automorphisms.

Let us denote by $\widetilde y=f(x,y)$ the image of $y$ under an
automorphism $\phi$. Then the ideal generated by $\widetilde y^2$
coincides with the ideal generated by $y^2$: $J={\rm id} (
y^2)={\rm id}( \widetilde y^2)$.

Using the  property of multiplication in $R$ from lemma
\ref{l3.1.2}, we can see that two-sided ideal generated by $y^2$
coincides with the left ideal generated by $y^2$: $Ry^2R=y^2R$.
Indeed, let us present an arbitrary element of $Ry^2R$ in the form
$\sum a_iy^2b_i$, where $a_i$, $b_i\in R$ are written in the normal
form $a_i=\sum \alpha_{k,l}y^kx^l$, $b_i=\sum \beta_{k,l}y^kx^l$.
Using the relations from Lemma \ref{l3.1.1}, we can pull $y^2$ to
the left through $a_i$'s and get the sum of monomials, which all
contain $y^2$ at the left hand side. Thus,  $\sum a_iy^2b_i=y^2u$,
$u\in R$.

Obviously automorphism maps the one-sided ideal $y^2R$ onto the
one-sided ideal $\widetilde y^2R$, both of which coincide with
$J={\rm id} (y^2)={\rm id}( \widetilde y^2)$. From this we
obtain a presentation of $y^2$ as $\widetilde y^2 u$ for some $u\in
R$. Considering  usual degrees of these polynomials (on the set of
variables $x,y$), we get $2=2k+l$, where $k={\rm deg}\,\widetilde y$
and $l={\rm deg}\,u$. Obviously $k\neq 0$. Hence the only
possibility is $k=1$ and $l=0$.

Thus, $\phi(y)=\widetilde y=\alpha x+\beta y+\gamma$ and $u=c$ for
some $\alpha,\beta,\gamma,c\in k$. Substituting these expressions
into the equality $y^2=\widetilde y^2 u$, we get $c(\alpha x+\beta
y+\gamma)^2=y^2$. Comparing the coefficients of the normal forms
of the right and left hand sides of this equality, we obtain
$\alpha=\gamma=0$, $\beta\neq 0$. Hence $\phi(y)=\beta y$.

Now we intend to use invertibility of $\phi$. Due to it there
exists $\alpha_{ij}\in k$ such that $x=\sum \alpha_{ij}\widetilde
y^i \widetilde x^j$. Substituting $\widetilde y=\beta y$, we get
$\ddd x=\sum_{r=0}^N p_r(y)\widetilde x^r$, where $N$ is a
positive integer, $p_r\in k[y]$ and $p_N\neq 0$. Comparing the
degrees on $x$ of the left and right hand sides of the last
equality we obtain $1=kN$, where $k={\rm deg}_x\widetilde x$.
Hence $k=N=1$, that is $x=p_0(y)+p_1(y)\widetilde x$ and
$\widetilde x=q_0(y)+q_1(y)x$, where $p_0,p_1,q_0,q_1\in k[y]$.
Substituting $\widetilde x=q_0(y)+q_1(y)x$ into
$x=p_0(y)+p_1(y)\widetilde x$, we obtain $q_1\in k$, that is
$\widetilde x=cx+p(y)$ for $c\in k$. One can easily verify that
the relation $\widetilde x\widetilde y-\widetilde y\widetilde x=
\widetilde y^2$ is satisfied for $\widetilde x=cx+p(y)$,
$\widetilde y=\beta y$ if and only if $c=\beta$. This gives us the
general form of the automorphisms:  $\widetilde x=cx+p(y)$,
$\widetilde y=c y, c\neq 0$.

Now we see that the group of automorphisms is a semidirect product
of the normal subgroup isomorphic to the additive group of
polynomials $k[y]$ and the subgroup isomorphic to the multiplicative
group $k^*$. The precisely written formula for multiplication in
${\rm Aut} R$ is the following:

$$\phi_1 \phi_2 = (p_1(y),c_1) (p_2(y), c_2) = (c_2
p_1(y)+p_2(c_1y), c_1c_2)$$

for $\phi_1, \phi_2 \in {\rm Aut} R $. $\Box$

\section{Prime and primitive ideals}\label{prime}

On our way we describe here also prime ideals of the Jordan plane and point out which of them are primitive. It is quite straightforward.

\begin{theorem}\label{aslkidjf} All prime ideals of $R=k\langle
x,y|xy-yx-y^2\rangle$ have a shape ${\rm id}(y)$ or ${\rm
id}(y,x-\alpha)$.
\end{theorem}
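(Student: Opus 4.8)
The plan is to classify the prime ideals $P$ of $R=k\langle x,y\mid xy-yx-y^2\rangle$ by first reducing modulo the distinguished ideal $\mathrm{id}(y)$ and then handling the case $y\notin P$ separately. Recall from Lemma~\ref{lcommq} and the discussion in the proof of Theorem~\ref{taut} that $J=\mathrm{id}(y^2)=y^2R$ is the minimal ideal with commutative quotient, and that $R/\mathrm{id}(y)\cong k[x]$ is a commutative polynomial ring. So if $y\in P$, then $P/\mathrm{id}(y)$ is a prime ideal of $k[x]$; since $k$ is algebraically closed, such a prime is either $(0)$ or $(x-\alpha)$ for some $\alpha\in k$, giving $P=\mathrm{id}(y)$ or $P=\mathrm{id}(y,x-\alpha)$. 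This is the easy half.

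The substance of the argument is to show that $y\in P$ for every prime ideal $P$; equivalently, that the image $\bar y$ of $y$ in the prime ring $R/P$ is zero. The key input is the multiplication formula of Lemma~\ref{l3.1.1}, which in the form $xy^n - y^n x = ny^{n+1}$ shows, after passing to $R/P$, that $[\bar x,\bar y^{\,n}] = n\bar y^{\,n+1}$. So I would first establish that $\bar y$ is nilpotent in $R/P$, or more precisely that $\bar y$ generates a nil (indeed nilpotent) ideal. One route: the two-sided ideal $\mathrm{id}(y)$ equals the left ideal $y R$ plus lower-order corrections — actually from Lemma~\ref{l3.1.2} one gets $Ry^2R = y^2R$, and more generally $yR$ is already a two-sided ideal modulo terms that can be absorbed, so $\mathrm{id}(y)$ has a controllable structure. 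The cleanest approach is to argue directly in $R/P$: from $[\bar x,\bar y^{\,n}]=n\bar y^{\,n+1}$ one sees that the descending chain of ideals generated by powers of $\bar y$ behaves like a nilpotent filtration unless $\bar y$ fails to be algebraic over the center, and then one derives a contradiction with primeness using a trace-type or eigenvalue argument analogous to Lemma~\ref{l2.1} — but in the infinite-dimensional setting one replaces traces by the observation that a prime ring has no nonzero nil one-sided ideals only if it has bounded index, so one needs instead to show $\bar y^{\,2}$ lands in $P$ directly.

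The cleanest self-contained step, and the one I would actually carry out: show that $y^2 \in P$ for any prime $P$, hence $y\in P$ since $R/P$ is prime (a prime ring has no nonzero nilpotent ideals, and $\mathrm{id}(y^2)$ is nilpotent modulo... ). Concretely, I would use that $\mathrm{id}(y^2)=y^2R$ is the minimal ideal with commutative quotient: $R/\mathrm{id}(y^2)$ is commutative, hence its prime ideals are classically understood, and any prime $P\supseteq \mathrm{id}(y^2)$ pulls back to one of the listed shapes (since $R/\mathrm{id}(y^2)$ surjects onto $k[x,y]/(y^2)$-like ring whose primes all contain $y$). For a prime $P$ with $y^2\notin P$: then in $R/P$, $\bar y^2\neq 0$ but $[\bar x,\bar y]=\bar y^2$, so iterating, $[\bar x, \bar y^n] = n\bar y^{n+1}$, and these are all nonzero, which forces $R/P$ to contain a copy of (a localization of) the Weyl-algebra-like structure; the obstruction — and the main technical point — is ruling this out, which one does by noting $\bar y$ is a normal element (by Lemma~\ref{l3.1.2}, $yR=Ry$ up to the filtration, so $\mathrm{id}(y)$ is generated as a one-sided ideal by $y$) whose powers generate a strictly descending chain, contradicting that $\mathrm{id}(y^{2})$ would have to be a nilpotent-free prime situation. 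I expect the delicate bookkeeping to be exactly this last dichotomy; everything else is the routine commutative classification over an algebraically closed field.
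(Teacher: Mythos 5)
Your ``easy half'' is fine and matches the paper: once $y\in P$, the quotient $R/{\rm id}(y)\cong k[x]$ is a PID over an algebraically closed field, so $P$ is ${\rm id}(y)$ or ${\rm id}(y,x-\alpha)$. But the proposal never actually proves the crux, namely that $y\in P$ (equivalently $y^2\in P$) for every nonzero prime $P$; this is acknowledged in the closing sentence (``I expect the delicate bookkeeping to be exactly this last dichotomy'') but never discharged. The nilpotency/trace idea you float cannot work as stated: $R/P$ is infinite-dimensional in the interesting case, so there is no trace, and in fact $R$ itself is a domain (a differential Ore extension $k[y][x;\delta]$ with $\delta(y)=y^2$), so $(0)$ is prime — one cannot hope to prove $\bar y$ nilpotent in $R/P$ for a completely general prime $P$. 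Similarly, your worry that $R/P$ ``contains a copy of a localization of a Weyl-algebra-like structure'' is exactly the thing to be ruled out, and no argument for ruling it out is supplied.

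What's missing is the paper's Lemma~\ref{zlks}: any nonzero proper two-sided ideal $I$ of $R$ contains a nonzero polynomial in $y$ alone. The proof is a short degree argument that your sketch doesn't contain: let $k(f)$ be the $x$-degree of the normal form of $f$, take $f\in I\setminus\{0\}$ of minimal $x$-degree, and observe that $g=[f,y]\in I$ satisfies $k(g)<k(f)$ (with $g\neq 0$ if $f\notin k[y]$, since the top $x$-coefficient of $g$ is $m\,p_m(y)\,y^2$ when $f=\sum p_j(y)x^j$ has $x$-degree $m\geq 1$), forcing $k(I)=0$. Once you have a nonzero $p(y)\in P$, factor it over $k$ and use that $R/P$ is a domain (completely prime, as $R$ is a differential Ore extension) to conclude some $y-\lambda_j\in P$; the relation $[x,\lambda_j]=\lambda_j^2$ in $R/P$ then forces $\lambda_j=0$, so $y\in P$. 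Your observation $Ry^2R=y^2R$ and the reduction from $y^2\in P$ to $y\in P$ via primeness are correct and compatible with this, but without the commutator degree-reduction lemma the argument has no entry point, so the proposal as it stands is a genuine gap rather than an alternative route.
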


\begin{lemma}\label{zlks} Any two-sided proper ideal $I$ in $R$
contains a polynomial in $y$.
\end{lemma}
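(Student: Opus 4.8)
The plan is to use the normal form for elements of $R$ together with the multiplication formulas established in Section~3 (Lemmata \ref{l3.1.1} and \ref{l3.1.2}), which let us move powers of $y$ past powers of $x$ at the cost of introducing only higher powers of $y$. Take any nonzero two-sided ideal $I \triangleleft R$ and a nonzero element $f \in I$, written in normal form $f = \sum \alpha_{k,l} y^k x^l$. The idea is to kill the variable $x$ by forming a suitable commutator or conjugate of $f$ with $x$: since the relation reads $xy - yx = y^2$, commuting with $x$ lowers the $x$-degree of each monomial $y^k x^l$ by one while producing terms of the form $y^{k+1}x^{l-1}$ (and possibly higher $y$-powers), i.e. the ``leading $x$-behaviour'' is that of a differentiation in $x$. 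So I would look at the operator $f \mapsto xf - fx$ acting on $R$; iterating it enough times (at least $d+1$ times, where $d$ is the maximal $x$-degree of a monomial appearing in $f$) should annihilate all $x$'s and leave a nonzero polynomial purely in $y$, which lies in $I$ because $I$ is a two-sided ideal.

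Concretely, the key step is a degree bookkeeping lemma: using Lemma~\ref{l3.1.1} one checks that for a monomial $y^k x^l$ with $l \geq 1$ one has $x \cdot y^k x^l - y^k x^l \cdot x = (\text{terms in } y^{\geq k+1} x^{l-1})$, with the coefficient of $y^{k+1}x^{l-1}$ equal to $k+1$ up to a nonzero scalar — in particular the bracket does strictly lower the maximal $x$-degree and does not identically vanish unless $f$ already has $x$-degree $0$. One must be slightly careful that the iterated bracket does not accidentally become zero before all $x$'s are removed; this is handled by tracking, among the monomials of top $x$-degree, the one with smallest $y$-degree, and checking its coefficient stays nonzero under the bracket (here characteristic zero and the field being a field, not just a ring, is used so that the integer coefficients $k+1$ etc. are invertible). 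After $d$ steps one is left with $g(y) \in I$, $g \neq 0$, a polynomial in $y$ alone. As a fallback, if $f$ already lies in $k[y]$ (i.e. $d = 0$) there is nothing to prove.

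I expect the main obstacle to be the verification that the iterated operator $\mathrm{ad}_x$ really does produce a \emph{nonzero} polynomial in $y$ and not $0$ — i.e. controlling cancellation among the many monomials generated. The cleanest way to organize this is probably not to iterate $\mathrm{ad}_x$ literally but to pick among the monomials of $f$ of maximal $x$-degree $d$ the one, say $c\, y^{k_0} x^d$ with $c \neq 0$, having minimal $y$-degree $k_0$, and to show by the multiplication formulas that $(\mathrm{ad}_x)^d(f)$ has a nonzero $y^{k_0+d}$-coefficient (a product of $c$ with an explicit nonzero integer), whereas all lower-degree monomials of $f$ contribute only to strictly higher powers of $y$ or are killed. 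This reduces the whole argument to Lemma~\ref{l3.1.2} applied monomial by monomial, which is a finite and routine (if slightly tedious) computation. $\Box$
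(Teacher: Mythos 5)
Your plan fails at its key step: commuting with $x$ does \emph{not} lower the $x$-degree. Using the paper's Lemma~\ref{l3.1.1}, $xy^k = y^k x + k\,y^{k+1}$, so
$$
[x,\, y^k x^l] \;=\; x y^k x^l - y^k x^l x \;=\; \bigl(y^k x^{l+1} + k\,y^{k+1}x^l\bigr) - y^k x^{l+1} \;=\; k\,y^{k+1}x^l .
$$
Thus $\mathrm{ad}_x$ preserves the $x$-degree and merely raises the $y$-degree; iterating it never removes $x$. (Morally, on normal forms $\mathrm{ad}_x$ acts like $y^2\partial_y$, not like a derivative in $x$.) The operator you want is commutation with $y$: from Lemma~\ref{l3.1.2}, $x^l y = \sum_{j\geq 1}\alpha_{j,l} y^j x^{l-j+1}$ with $\alpha_{1,l}=1$ and $\alpha_{2,l}=l$, so
$$
[y^k x^l,\, y] \;=\; \sum_{j=2}^{l+1}\alpha_{j,l}\, y^{k+j}x^{l-j+1},
$$
which has $x$-degree exactly $l-1$ with leading coefficient $l\neq 0$ (characteristic zero). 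Once you replace $\mathrm{ad}_x$ by $\mathrm{ad}_y$, your bookkeeping strategy works, and in fact the cancellation worries you raised disappear if you argue by minimality rather than by iteration: take $f\in I\setminus\{0\}$ of minimal $x$-degree $k(I)$; if $k(I)>0$ then $[f,y]\in I$ is nonzero (since $f\notin k[y]$) and has strictly smaller $x$-degree by the display above, a contradiction. This is precisely the paper's argument. So the route is right in spirit (reduce $x$-degree by a bracket, use the Jordan relation to track leading coefficients), but the operator you chose does the opposite of what you need.
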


\begin{proof} For each $f\in R\setminus\{0\}$, let $k(f)$ be the
$x$-degree of the normal form of $f$ and $k(I)=\min\limits_{f\in
I\setminus\{0\}}k(f)$. Assume that $k(I)>0$. Let $f\in I$ be such
that $k(f)=k(I)$. Since $I$ is a two-sided ideal, $g=[f,y]\in I$.
Since $f$ is not a polynomial in $y$, we have $g\neq 0$. On the
other hand, it is easy to see that $k([h,y])<k(h)$ for each $h\in
R$. Hence $k(g)<k(f)$. We have arrived to a contradiction with the
equality $k(f)=k(I)$. Thus $k(I)=0$ and therefore there is $f\in
I\setminus\{0\}$, being a polynomial in $y$.
\end{proof}

We shall take into account that in purely differential Ore
extensions quotients by prime ideals are domains. So we can
substitute primeness with completely primeness in our proof.

\begin{lemma}\label{zlksz} Let $P$ be a prime two-sided ideal of the
Jordan plane $R$. Then $y\in P$.
\end{lemma}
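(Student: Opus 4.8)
The plan is to combine the structural facts already at hand: that $\operatorname{id}(y)$ is the minimal ideal with commutative quotient (Lemma~\ref{lcommq}), that every proper two-sided ideal contains a nonzero polynomial in $y$ (Lemma~\ref{zlks}), and that $R/P$ is a domain because $R$ is a purely differential Ore extension $k[y][x;\delta]$ with $\delta(y)=y^2$ (the remark just before the statement). So let $P$ be a prime two-sided ideal of $R$. If $P=R$ there is nothing to prove, so assume $P$ is proper. By Lemma~\ref{zlks} there is a nonzero $f(y)\in P\cap k[y]$. Since $k$ is algebraically closed I can factor $f(y)=c\,y^{a}\prod_{j}(y-\mu_j)$ with $c\neq0$, $a\ge0$, and $\mu_j\neq0$. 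Each factor $y-\mu_j$ is a central-like element only modulo lower terms, so I cannot immediately split the product; instead I argue inside the domain $R/P$.

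First I would show that the image $\bar y$ of $y$ in $R/P$ is nilpotent, or rather that $\bar y$ is zero. Working in $R/P$, the relation $xy-yx=y^2$ gives $\bar x\bar y-\bar y\bar x=\bar y^{2}$, i.e. $\operatorname{ad}(\bar x)(\bar y)=\bar y^{2}$, and by induction (using Lemma~\ref{l3.1.2}, whose formula passes to any quotient) one gets identities expressing $\bar x^{n}\bar y$ in terms of $\bar y^{k}\bar x^{n-k+1}$ with positive integer coefficients $\alpha_{k,n}=n!/(n-k+1)!$. The key point is that $\bar y$ generates a two-sided ideal of $R/P$ that, by Lemma~\ref{lcommq} applied to $R$, is the kernel of the map to the commutative quotient; but $\operatorname{id}(y)$ in $R$ need not be contained in $P$ a priori. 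So the cleaner route is: from $f(y)=0$ in $R/P$ and the fact that $R/P$ is a domain, peel off factors. Indeed, $y^{a}\prod_j(y-\mu_j)=0$; I claim each $\mu_j$ can be cancelled. Suppose $\bar y-\mu$ were invertible-ish; more carefully, I want to show $\prod_j(\bar y-\mu_j)$ is not a zero-divisor unless one factor vanishes. The obstacle is that $\bar y-\mu_j$ is not central, so "domain" does not let me cancel a product of non-commuting-looking factors directly — but here all factors are polynomials in the single element $\bar y$, hence they commute with each other, so $k[\bar y]$ is a commutative subring of the domain $R/P$ and is therefore itself a domain. Hence one of the factors $\bar y$ or $\bar y-\mu_j$ is zero in $R/P$.

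If $\bar y=0$ we are done. So suppose $\bar y=\mu_j=:\mu\neq0$, i.e. $y-\mu\in P$. Then in $R/P$ we have $\bar y=\mu$, a nonzero scalar, and substituting into $\bar x\bar y-\bar y\bar x=\bar y^{2}$ gives $0=\mu\bar x-\bar x\mu=\mu^{2}$, forcing $\mu=0$, a contradiction. (Equivalently: $\mu$ is central, so the commutator $\bar x\mu-\mu\bar x$ is $0$, while $\mu^2\neq0$.) Therefore the only surviving possibility is $\bar y=0$, that is $y\in P$. The main obstacle in writing this up cleanly is making sure the cancellation step is justified without assuming anything non-trivial about the structure of $R/P$; the resolution is the observation that all the polynomial-in-$y$ factors lie in the commutative subalgebra $k[\bar y]\subseteq R/P$, which inherits being a domain, so ordinary unique factorization / cancellation in $k[y]$ applies, and then the trace-type / centrality argument $\mu^2=0$ finishes it. This also re-proves Lemma~\ref{l2.1} as the special case of finite-dimensional (hence the relevant primitive) quotients, which is a reassuring consistency check.
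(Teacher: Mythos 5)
Your proof is correct and follows essentially the same route as the paper: apply Lemma~\ref{zlks} to obtain a nonzero $p(y)\in P$, factor it over the algebraically closed field, use that $R/P$ is a domain (the Ore-extension remark) to force some linear factor $y-\lambda$ into $P$, and then kill $\lambda$ with the defining relation $[\bar x,\lambda]=\lambda^2$. One small remark: the detour through the commutative subring $k[\bar y]$ is unnecessary — in any (not necessarily commutative) domain, $a_1\cdots a_m=0$ with all $a_i\neq0$ is already impossible by peeling factors one at a time, so you do not need the factors to commute to conclude that one of $\bar y,\bar y-\mu_j$ vanishes.
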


\begin{proof} By Lemma~\ref{zlks} there is a non-zero polynomial $p$
such that $p(y)\in P$. Since $ k$ is algebraically closed, we can
assume that  $p(y)=\prod(y-\lambda_j)$, $\lambda_j\in{k}$. If all
$y-\lambda_j\notin P$, we have zero divisors in $R/P$. Thus
$y-\lambda_j\in P$ for some $\lambda_j\in { k}$. Now we shall show
that $\lambda_j=0$. Indeed, since $y-\lambda_j\in P$, $y=\lambda_j$ in $R/P$.
In the quotient $R/P$ we still have the defining
relation of $R$, so $[x,\lambda_j]=\lambda_j^2$ and $\lambda_j^2=0$,
which implies $\lambda_j=0$.
\end{proof}

\begin{proof}(of Theorem~\ref{aslkidjf}).

Let us notice that the ideal generated by $y$ is prime in $R$.
Indeed, $R/{\rm id}(y)={ k}[x]$ is a domain, which means that
${\rm id}(y)$ is completely prime, and by the above remark is
prime.

Ideal ${\rm id}(y,x-\alpha)$ is also prime due to a similar
reason. Indeed, $R/{\rm id}(y,x-\alpha)={ k}$ is a domain. We
have to demonstrate that there are no other prime ideals. Let $P$ be
a proper prime ideal in $R$. By Lemma~\ref{zlksz}, $y\in P$. Thus,
$P$ can be generated just by $y$ or the ideal $P$ is generated by
$\{y\}\cup{\cal F}$, where $\cal F$ is a subset of ${ k}[x]$. Then
$R/P=(R/{\rm id}( y))/{\rm id}({\cal F})={ k}[x]/{\rm id}(
{\cal F})$. Since ${ k}[x]$ is a principle ideal domain, we
have ${\rm id}({\cal F})={\rm id}(g)$, where $g\in {
k}[x]$. Thus, $R/P={ k}[x]/{\rm id}(g)$. Since $R/P$ must be a
domain, we have $g=x-\alpha$ for some $\alpha\in{ k}$. Thus,
$P={\rm id}( y,x-\alpha)$.
\end{proof}

Now we can easily see which of these ideals are primitive.

\begin{corollary}\label{zmhsdfksj} The complete set of primitive
ideals in $R$ consists of the ideals ${\rm id}(y,x-\alpha)$,
$\alpha\in { k}$.
\end{corollary}

\section{Irreducible modules,
description of all finite dimensional modules}\label{descript}

We describe here all irreducible and completely reducible modules
over the Jordan plane. For arbitrary module, we give a description  of
 $X=\rho(x)$, in terms of ${\cal B}-Toeplitz$ matrices, subject to
 the Jordan normal form of  the matrix $Y=\rho(y)$. This kind of description of
the set of all representations will be useful for obtaining results in the
stratification of representation variety, we suggest later
in the paper.

We will need the following definitions.

\begin{definition} Let us remind that upper triangular (rectangular) Toeplitz
matrix is a matrix with entries $a_{ij}$ defined only by the
difference $i-j$ and  it has zeros below the main diagonal (or upper
main diagonal in a proper rectangular case).
\end{definition}

\begin{definition}
We call a matrix corresponding to the partition {$\cal P$}
block-upper triangular Toeplitz ($\cal{B}$-Toeplitz), if all
blocks, defined by the partition, including diagonal blocks, are
upper triangular (rectangular) Toeplitz matrices.

\end{definition}

\begin{definition}

We call a matrix corresponding to the partition {$\cal P$}
$\cal{J}$-Toeplitz  if it is a sum of $\cal{B}$-Toeplitz matrix and
a matrix with diagonal blocks having the sequence $0,1,2,...$ on the
first upper diagonal and zeros elsewhere.
\end{definition}

\begin{theorem}\label{tdm} The complete set of finite
dimensional representations of $R$ (subject to the Jordan normal form of
$Y$) can be described as a set of pairs of matrices $(X_n,Y_n)$,
where $Y_n$ is in the Jordan form (with zero eigenvalues) corresponding to the partition {$\cal
P$} of $n$ and $X_n$  is a $\cal{J}$-Toeplitz matrix defined by
{$\cal P$}.

\end{theorem}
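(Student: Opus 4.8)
The plan is to reduce the classification of pairs $(X,Y)$ satisfying $XY-YX=Y^2$ to a normal form by exploiting the freedom to conjugate simultaneously: two representations that differ by a simultaneous conjugation are isomorphic, so we may first bring $Y=\rho_n(y)$ into Jordan form. By Lemma \ref{l2.1}, $Y$ is nilpotent, so its Jordan form has only zero eigenvalues and is determined by a partition $\cal P$ of $n$; fix this $Y_n$. The remaining task is to determine precisely which matrices $X$ satisfy $X Y_n - Y_n X = Y_n^2$, and to show that, after using the residual conjugation freedom (the centralizer-type group of transformations preserving the Jordan form of $Y_n$), every solution can be brought to a $\cal J$-Toeplitz matrix defined by $\cal P$, while conversely every such matrix together with $Y_n$ gives a representation.

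First I would treat the single-block case: $Y_n = N$ the nilpotent Jordan block of size $n$. Writing the equation $XN - NX = N^2$ entrywise, the commutator with $N$ shifts rows down and columns left, and I would show that the general solution is $X = T + D$, where $T$ is an arbitrary upper-triangular Toeplitz matrix (the solution space of the homogeneous equation $XN=NX$, i.e. the centralizer of a regular nilpotent, which is exactly the algebra of upper-triangular Toeplitz matrices) and $D$ is one fixed particular solution of $XN-NX=N^2$; the natural choice of $D$ is the matrix with $0,1,2,\dots$ on the first superdiagonal and zeros elsewhere, which one checks directly satisfies $DN-ND=N^2$ by a short computation paralleling Lemma \ref{l3.1.1}. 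This already gives the $\cal J$-Toeplitz shape for the trivial partition. For a general partition, I would write $Y_n$ in block-diagonal form with blocks $N_{p_1},\dots,N_{p_m}$ and expand the equation $X Y_n - Y_n X = Y_n^2$ in the corresponding block decomposition $X=(X_{ij})$. Each block equation reads $X_{ij} N_{p_j} - N_{p_i} X_{ij} = \delta_{ij} N_{p_i}^2$; solving the homogeneous Sylvester-type equation $X_{ij} N_{p_j} = N_{p_i} X_{ij}$ shows $X_{ij}$ must be a rectangular upper-triangular Toeplitz matrix (this is the standard description of $\mathrm{Hom}$ between Jordan blocks of a nilpotent), and on the diagonal blocks one adds the particular solution $D_{p_i}$ as above. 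Assembling the blocks gives exactly the definition of a $\cal J$-Toeplitz matrix attached to $\cal P$.

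Conversely, one must check that every pair $(X_n, Y_n)$ with $Y_n$ in Jordan form corresponding to $\cal P$ and $X_n$ a $\cal J$-Toeplitz matrix defined by $\cal P$ genuinely satisfies $X_n Y_n - Y_n X_n = Y_n^2$; this is the same block computation read backwards and is routine. Finally I would note that distinct partitions give non-isomorphic $Y$'s hence disjoint families, and that within a fixed partition the description is exhaustive because we used all of the conjugation freedom only to normalize $Y$, and then solved the linear equation for $X$ completely. The main obstacle is the bookkeeping in the multi-block case: one has to be careful that the particular solution on the diagonal blocks does not interfere with the off-diagonal Toeplitz conditions, and that the sizes in the rectangular Toeplitz blocks (where the "upper main diagonal" convention in the Definition matters) are set up so that the shift operators $N_{p_i}$ and $N_{p_j}$ of different sizes compose correctly; this is exactly the content of the $\cal B$-Toeplitz and $\cal J$-Toeplitz definitions, so the theorem is really the assertion that those definitions were chosen correctly, and the proof is the verification that they were.
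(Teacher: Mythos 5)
Your proposal is correct and follows essentially the same route as the paper: normalize $Y$ to Jordan form (nilpotent by Lemma \ref{l2.1}), solve $[X,J_n]=J_n^2$ entrywise in the single-block case to get an upper-triangular Toeplitz part plus the particular solution with $0,1,2,\dots$ on the first superdiagonal, and then in the multi-block case decompose $X$ into blocks so that diagonal blocks repeat the single-block analysis and off-diagonal blocks solve the homogeneous intertwining equation, yielding rectangular Toeplitz matrices. One small remark: your opening paragraph suggests some residual conjugation might be needed to bring $X$ to $\cal J$-Toeplitz form, but as your own block computation (and the paper's) shows, once $Y$ is fixed in Jordan form, \emph{every} solution $X$ of the linear equation is already $\cal J$-Toeplitz, so no further normalization is used.
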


From this theorem immediately follows a precise description of all
{\it irreducible} and {\it completely reducible} modules.

\begin{corollary}\label{c3.1}
A complete set of pairwise  non-isomorphic finite dimensional {\it
irreducible} $R$-modules is $\{ S_{a} | {a} \in k\}$, where $S_{a}$
defined by the following action of $X$ and $Y$ on one-dimensional
vector space: $Xu=\a u, Yu=0.$

All {\it completely reducible} representations are given by
matrices: $Y_n=(0)$, $X_n$ is a diagonal matrix ${\rm diag}
(a_1,...,a_n)$.
\end{corollary}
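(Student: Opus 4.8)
The plan is to derive both halves of the statement from a single structural observation: in any $R$-module the subspace $\ker Y$ is a submodule. First I would verify this directly from the defining relation $xy-yx=y^2$: if $v\in\ker Y$ then $YXv=XYv-Y^2v=0$, so $Xv\in\ker Y$, while trivially $Yv=0\in\ker Y$; hence $\ker Y$ is invariant under both $X=\rho(x)$ and $Y=\rho(y)$.

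For the classification of irreducibles I would then take an irreducible module $M\neq 0$. By Lemma~\ref{l2.1} the operator $Y$ is nilpotent, so $\ker Y\neq\{0\}$; being a submodule of the irreducible $M$, it must equal $M$, i.e.\ $Y=0$. Thus $M$ is simply a finite-dimensional irreducible $k[x]$-module, and since $k$ is algebraically closed such a module is one-dimensional with $X$ acting by a scalar $a\in k$ --- precisely $S_a$. Conversely each $S_a$ is one-dimensional, hence irreducible, and an $R$-module isomorphism $S_a\to S_b$ is a nonzero scalar intertwining the action of $x$, which forces $a=b$; so the $S_a$ are pairwise non-isomorphic. (Equivalently, one can read this off Theorem~\ref{tdm}: as soon as $Y_n\neq 0$ the proper nonzero subspace $\ker Y_n$ is, by the relation, also $X_n$-invariant, hence a proper submodule, so an irreducible module must have $Y_n=0$ and one is back in the $k[x]$ situation.)

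Finally, for the completely reducible modules I would use that, by definition, such a module is a direct sum of irreducibles, hence by the previous step isomorphic to $\bigoplus_{i=1}^n S_{a_i}$ for some $a_1,\dots,a_n\in k$; in a basis adapted to this decomposition one has $Y_n=(0)$ and $X_n={\rm diag}(a_1,\dots,a_n)$. The converse is immediate, since $Y_n=(0)$, $X_n={\rm diag}(a_1,\dots,a_n)$ exhibit $M$ as $\bigoplus_{i=1}^n S_{a_i}$. The only point requiring a word of care is that a module with $Y=0$ but $X$ \emph{not} diagonalisable contains a Jordan block of size $\geq 2$, hence an indecomposable non-simple $k[x]$-submodule, and so fails to be semisimple; this is what guarantees the normalisation to diagonal form and the completeness of the list. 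I expect no real obstacle in this argument: the only nonroutine ingredient is the initial remark that $\ker Y$ is a submodule, after which everything reduces to elementary $k[x]$-module theory.
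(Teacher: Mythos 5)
Your proof is correct, and it takes a somewhat more self-contained route than the paper. The paper presents this corollary as an ``immediate'' consequence of Theorem~\ref{tdm}, which classifies \emph{all} $n$-dimensional representations in the Jordan--Toeplitz normal form $(X_n,Y_n)$; from that description one is meant to observe that any nonzero nilpotent $Y_n$ forces a proper invariant subspace, so irreducibility requires $Y_n=0$, reducing to the $k[x]$ case. You instead isolate precisely the step that does the work --- namely that $\ker Y$ is a submodule, which you verify directly from $xy-yx=y^2$ --- and combine it only with Lemma~\ref{l2.1} (nilpotency of $Y$). This avoids invoking the full strength of Theorem~\ref{tdm} and makes the deduction for irreducibles genuinely immediate rather than implicitly so; you also note the Theorem~\ref{tdm} route as an alternative, so you are aware of how the paper organizes the argument. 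Your handling of the completely reducible case (direct sum of the $S_{a_i}$'s, plus the remark that a non-diagonalisable $X$ with $Y=0$ yields a non-semisimple $k[x]$-module, so the diagonal form is forced) is a small but genuine completeness point that the paper leaves tacit. Both approaches buy the same statement; yours is more economical at the level of this corollary, while the paper's is a by-product of machinery it has already built for the stratification results that follow.
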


\proof Let us describe an arbitrary representation $\rho_n:R\to
M_n(k)$ of $R$, for $n\in\N$. We can assume that the image of one of
the generators $Y=\rho_n(y)$ is in the Jordan normal form.

{\it The full Jordan block case.}

 Let us first find all possible matrices
$X=\rho_n(x)$ in the case when $Y$ is just the full Jordan block:
$Y=J_n$. We have to find then matrices $X=(a_{ij})$ satisfying the
relation $[X,J_n]=J_n^2$. Let $B=[X,J_n]=(b_{ij})$, then
$b_{ij}=a_{i+1,j}-a_{i,j-1}$. From the condition $B=J_n^2$ it
follows that $b_{ij}=0$ if $i\neq j-2$ and $b_{ij}=1$ if $i=j-2$.
Here and later on we will  use the following numeration of
diagonals: main diagonal has number 0, upper diagonals have positive
numbers $1,2,\dots,n-1$ and lower diagonals have negative numbers
$-1,-2,\dots,-n+1$.

The first condition above means that in the matrix $X$ elements
of any diagonal with number $0 \leq k\neq 1$ coincide and are zero
for $k<0$. From the second condition it follows that the elements of
the first upper diagonal form an arithmetic progression with
difference 1:  $a+1,\dots,a+n-1$.

Denote by $X_n^0$ a matrix with the sequence $0,1,2,...$ on the first upper
diagonal and zeros elsewhere.  Then our family of representations
consists of pairs of matrices $(X_n,Y_n)=(X_n^0+T, J_n)$, where $T$
is an arbitrary upper triangular Toeplitz matrix.

{\it The case of an arbitrary partition}.

 Consider now the
general case when the Jordan normal form of $Y$ contains several
Jordan blocks: $Y=(J_1,...,J_m)$, corresponding to the partition $\cal P$.

Cut an arbitrary matrix $X$ into the square and rectangular blocks
of sizes defined by $\cal P$. Denote the blocks by $A_{ij}, \, i,j=\overline
{1,m}$.

Then we can describe the structure of the matrix $B=[X,Y]$ in the
following way:

{\small

$$
B=\left(\Msix\right),\ \ \text{where\ }[A_{ij}]=A_{ij}J_i-
J_jA_{ij}.
$$}
From the condition $B=Y^2$ we have that $[A_{ii},J_i]=J_i^2$ and
hence $A_{ii}$ is the same as in the previous case when $Y$ was just
a full Jordan block and $A_{ij}J_i-J_jA_{ij}=0$ for $i\neq j$. The
latter condition means that $A_{ij}$ for $i\neq j$  are upper rectangular Toeplitz matrices.
Hence $X$ has a shape of ${\cal J}$-Toeplitz matrix.

As a result we have the
family of representations described in Theorem \ref{tdm}. $\Box$

Using arguments analogues to the  above we can ensure.

\begin{proposition}\label{cstr}
Let $Y$ be a matrix in the Jordan normal form defined by the partition {$\cal
P$},  then the centralizer of $Y$, $C(Y)=\{ Z \in Gl_n(k) \,|\,
ZY=YZ \}$ consists of all $\cal{B}$-Toeplitz matrices corresponding
to  {$\cal P$}.

\end{proposition}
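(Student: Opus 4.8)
The plan is to run the same block computation as in the proof of Theorem~\ref{tdm}, but with the commutator set equal to zero instead of to $Y^2$. Write $Y$ in its Jordan normal form $Y=(J_1,\dots,J_m)$ according to the partition ${\cal P}=(n_1,\dots,n_m)$, and cut an arbitrary matrix $Z$ into blocks $Z_{ij}$ of sizes $n_i\times n_j$. As observed there, the $(i,j)$-block of the commutator $[Z,Y]$ equals $Z_{ij}J_j-J_iZ_{ij}$, so the condition $ZY=YZ$ is equivalent to the system of block equations $Z_{ij}J_j=J_iZ_{ij}$ for all $i,j$.

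The next step is to solve the single equation $WJ_q=J_pW$ for a rectangular $n_p\times n_q$ matrix $W=(w_{ab})$. Since $J_q$ and $J_p$ act as shifts, writing this out entrywise gives the relations $w_{a,b-1}=w_{a+1,b}$, with the boundary conventions $w_{a,0}=0$ and $w_{n_p+1,b}=0$. The first relation says that $W$ is constant along diagonals (Toeplitz), and the boundary conditions then force the entries in the lower-left corner to vanish, so that $W$ is an upper (rectangular) Toeplitz matrix in the sense of the Definition above — this is exactly the form obtained for the off-diagonal blocks $A_{ij}$, $i\neq j$, in the proof of Theorem~\ref{tdm}. The only difference from that proof is in the diagonal blocks: here $[Z_{ii},J_i]=0$ rather than $J_i^2$, so $Z_{ii}$ is an ordinary upper triangular Toeplitz matrix with no ${\cal J}$-Toeplitz correction. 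Since every block of $Z$, the diagonal ones included, is thus upper triangular (rectangular) Toeplitz, $Z$ is precisely a ${\cal B}$-Toeplitz matrix corresponding to ${\cal P}$. For the converse, one checks by reversing the entrywise computation that any ${\cal B}$-Toeplitz matrix satisfies each block equation $Z_{ij}J_j=J_iZ_{ij}$, hence commutes with $Y$; and since $C(Y)$ as stated consists of the invertible such $Z$, it is the group of units of the algebra of ${\cal B}$-Toeplitz matrices.

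I do not expect a genuine obstacle here: the whole argument is routine linear algebra, running in parallel to Theorem~\ref{tdm}. The only point requiring a little care is matching the boundary behaviour of the non-square blocks to the paper's Definition of an upper rectangular Toeplitz matrix — that is, checking which corner of a rectangular block is forced to be zero — together with the observation that the diagonal-block equation degenerates to the plain commuting case $[Z_{ii},J_i]=0$.
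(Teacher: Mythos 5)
Your proposal is correct and is exactly the argument the paper has in mind: the Proposition appears immediately after Theorem~\ref{tdm} with only the remark that it follows ``using arguments analogous to the above,'' and you have simply carried out that analogous block computation with $[Z,Y]=0$ in place of $[X,Y]=Y^2$. Two small points worth noting: your block formula $(ZY-YZ)_{ij}=Z_{ij}J_j-J_iZ_{ij}$ is the correct one (the displayed formula in the paper has the indices transposed, evidently a typo), and you are right to flag at the end that, as literally stated with $Z\in GL_n(k)$, $C(Y)$ is the group of invertible $\cal B$-Toeplitz matrices rather than the full algebra.
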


\section{$R$ is residually finite dimensional
}\label{rfd}

Let us consider now one of the sequences of representations
constructed in the  previous section: $\ee_n: R \rightarrow {\rm
End}\,\, k^n$, defined by $\ee_n(y)=J_n$, $\ee_n(x)=X_n^0$.
%$$
%Y_n=\left(\MtwoA\right),\quad
%X^0_n=\left(\begin{array}{cccccc}
%0&1&&&&\\
%&0&2&&\smash{\hbox{\bIg0}}&\\
%&&0&3&&\\
%&&&\ddots&\ddots&\\
%&\smash{\hbox{\bIg0}}&&&0&n-1\\
%&&&&&0\end{array}\right).
%$$
Note that this sequence is basic in the following sense. All representations
corresponding to $Y$ with full Jordan block could be obtained from
$\ee_n$ by the following automorphism of $R$, \, $\phi:R
\longrightarrow R: x \mapsto x+a, y \mapsto y$ where $a \in R$ such
that $[a,y]=0$.

In addition to the conventional equivalence relation on the
representations given by simultaneous conjugation of matrices:
$\rho' \sim \rho''$ if there exists $g \in GL(n)$ such that
$g\rho'g^{-1}=\rho''$ or equivalently, R-modules corresponding to
$\rho'$ and $\rho''$ are isomorphic, we introduce here one more
equivalence relation.

\begin{definition} We say that two representations of the
algebra $R$ are {\it auto-equivalent} (equivalent up to
automorphism) $\p' \sim_A \p'' $ if  there exists $\phi \in \rm{Aut}
(R)$ such that $ \p' \phi \sim \p''$.
\end{definition}

So  we can state that any full block representation
 is auto-equivalent to $\ee_n$ for appropriate $n$.

We will prove now that the sequence of representations $\ee_n$
asymptotically is faithful.

Start with the calculation of matrices which are image of monomials
$y^k x^m$ under the representation $\ee_n$.

\begin{lemma}\label{lImyx} For the representation $\ee$ as above
the matrix $\ee(y^k x^m)$ has the following shape: there is only one
nonzero diagonal, number $k+m$. In the diagonal
appears the sequence $p(0),p(1),...,p(j),...$ of values
of polynomial $p(j)=(k+j)...(k+m+j-1)=\prod_{i=1}^m (k+j+i)$ of degree
$m$.
\end{lemma}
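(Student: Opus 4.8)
The plan is to compute directly the matrices $\ee_n(y)=J_n$ and $\ee_n(x)=X_n^0$ acting on $k^n$, and then build up the image of an arbitrary monomial $y^kx^m$ by combining these two computations. The natural order is to handle the two "pure" cases first (powers of $y$, then powers of $x$) and then multiply. For the powers of $y$: since $Y=J_n$ is the single Jordan block with the sequence $1,1,\dots,1$ on the first upper diagonal and zeros elsewhere, $Y^k$ has the single nonzero diagonal number $k$, carrying the constant sequence $1,1,\dots$. For the powers of $x$: recall from the description in section \ref{descript} that $X_n^0$ carries the sequence $0,1,2,\dots$ on the first upper diagonal and zeros elsewhere; so $X_n^0$ is itself the "weighted shift" $e_{j}\mapsto (j-1)e_{j-1}$ in the obvious indexing, and therefore $(X_n^0)^m$ has single nonzero diagonal number $m$ with the sequence of values $j(j+1)\cdots(j+m-1)$ (a falling/rising factorial), which is exactly the $k=0$ instance of the claimed polynomial $p(j)=\prod_{i=1}^m(k+j+i)$ evaluated appropriately.

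Next I would observe that both $Y=J_n$ and $X_n^0$ are weighted shifts by one step (in the same direction), hence so is any product of them: a product of $k$ copies of $Y$ and $m$ copies of $X_n^0$, in any order, is a single weighted shift along diagonal number $k+m$. So the content of the lemma is purely the identification of the weight sequence, and the only thing to check is that when we multiply $Y^k$ (weights all $1$) by $(X_n^0)^m$ (weights $j(j+1)\cdots(j+m-1)$) the composite weight at position $j$ is the stated product $p(j)=(k+j)(k+j+1)\cdots(k+m+j-1)$. Here I would be careful about the order: since $Y$ shifts with weight $1$, premultiplying or postmultiplying by $Y^k$ merely reindexes the weight sequence of $(X_n^0)^m$ by a shift of $k$ in the argument $j$ — and indeed $p_{(X_n^0)^m}(j+k)=(j+k)(j+k+1)\cdots(j+k+m-1)=\prod_{i=1}^m(k+j+i-1)$, matching the formula up to the indexing convention $p(0),p(1),\dots$ used in the statement. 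The clean way to present this is an induction on $m$ (with $k$ fixed), using at each step that multiplying a single weighted shift with weights $w(j)$ by another single-step shift with weights $1$ shifts the index, while multiplying by $X_n^0$ multiplies the weight at $j$ by a linear factor $(\text{const}+j)$; the base case $m=0$ is the computation of $Y^k$ above.

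The main obstacle is bookkeeping, not mathematics: getting the indexing of diagonals and of the entries along a diagonal consistent between $J_n$, $X_n^0$, and the final product, and making sure the "sequence $p(0),p(1),\dots$" in the statement is aligned with the actual matrix entries (i.e., where along diagonal $k+m$ the value $p(0)$ sits). There is also a minor edge effect when $k+m\ge n$, in which case the matrix is simply zero and the formula is vacuous; I would remark on this but not dwell on it. Once the indexing is pinned down, everything is a one-line induction, so I would keep the write-up short and refer back to the explicit shapes of $J_n$ and $X_n^0$ recorded in section \ref{descript} and to Lemma \ref{l3.1.1}–\ref{l3.1.2} for the multiplication pattern.
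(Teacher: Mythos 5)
Your proposal is correct and follows essentially the same route as the paper: compute $\ee(x^m)$ as a single-diagonal (weighted-shift) matrix with entries $j(j+1)\cdots(j+m-1)$ along diagonal $m$, observe that left-multiplication by $\ee(y^k)=J_n^k$ is a pure shift by $k$ steps, and read off the product's entries along diagonal $k+m$. The paper does this by a direct two-step computation rather than an explicit induction on $m$, but that is only a presentational difference; the key observation (both factors are single-diagonal shifts, so the product is too, and the only work is identifying the weight sequence) is the same in both.
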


\proof Image $\ee(x^m)$ of the monomial $x^m$ is a matrix with
vector $[1 \cdot 2 \cdot...\cdot m, 2 \cdot 3 \cdot ...\cdot (m+1),
...] $ on the (upper) diagonal number $m$ in the above numeration
and zeros elsewhere. Multiplication by  $\ee (y^k)$ acts on matrix
by  moving up all rows on $k$ steps. We can  now see that matrix
corresponding to the polynomial $y^k x^m$ can have only one nonzero
diagonal, number $m+k$, and vector in this diagonal is the
following: $[(k+1)...(m+k), (k+2)...(m+k+1),...]$.$\Box$

\begin{theorem}\label{t4.1} Let $\ee_n$ be the sequence of representations of $R$ as above.
Then $\cap_{n=0}^\infty \, {\rm ker} \, \ee_n = 0$. \end{theorem}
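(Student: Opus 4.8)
The plan is to show that any nonzero element $f\in R$ survives under $\ee_n$ for $n$ large enough; then $\bigcap_n \ker\ee_n=0$. Write $f$ in its normal form as a $k$-linear combination of the basis monomials $y^kx^m$ (Lemma~\ref{l4.1}), say $f=\sum_{(k,m)\in\Lambda}c_{k,m}\,y^kx^m$ with all $c_{k,m}\neq 0$ and $\Lambda$ finite and nonempty. Applying $\ee_n$ and using Lemma~\ref{lImyx}, each summand $c_{k,m}\,\ee_n(y^kx^m)$ is supported on the single diagonal numbered $k+m$, with entries running through the values of the polynomial $p_{k,m}(j)=\prod_{i=1}^{m}(k+j+i)$ of degree $m$ in the slot-index $j$. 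So $\ee_n(f)$ decomposes as a sum over the diagonals $d=k+m$ that occur in $\Lambda$, and the entries on diagonal $d$ are the values of the single-variable polynomial
$$
q_d(j)=\sum_{\substack{(k,m)\in\Lambda\\ k+m=d}} c_{k,m}\,p_{k,m}(j).
$$

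The key step is that for each diagonal $d$ occurring in $\Lambda$, the polynomial $q_d$ is not the zero polynomial. Among the pairs $(k,m)\in\Lambda$ with $k+m=d$, pick the one with the largest value of $m$ — this $m$ is unique since $k$ is then determined by $k=d-m$. That summand contributes a term of degree exactly $m$ in $j$ to $q_d$, while every other summand with the same $d$ has strictly smaller degree; hence the leading coefficient of $q_d$ is exactly $c_{d-m,m}\neq 0$, so $q_d\not\equiv 0$. A nonzero polynomial has only finitely many roots, so for each occurring $d$ there is an index $j_d$ with $q_d(j_d)\neq 0$; choosing $n$ large enough that the corresponding matrix entry genuinely appears on diagonal $d$ in an $n\times n$ matrix (it suffices to take $n$ larger than $\max_d(j_d+d+1)$), we get $\ee_n(f)\neq 0$ because diagonals with different $d$ do not interfere. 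Therefore $f\notin\ker\ee_n$.

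Since this holds for every nonzero $f\in R$, we conclude $\bigcap_{n=0}^\infty\ker\ee_n=0$, which is the assertion. The only mild subtlety — and the place where a careless argument could go wrong — is the bookkeeping that different diagonals do not cancel each other: this is exactly where Lemma~\ref{lImyx} is essential, since it guarantees each $\ee_n(y^kx^m)$ lives on one single diagonal, so the contributions sort cleanly by $d=k+m$ and one may analyze each diagonal in isolation. Everything else is the elementary fact that a finite sum of polynomials of distinct degrees is nonzero and that a nonzero polynomial over a field of characteristic zero has finitely many zeros. $\Box$
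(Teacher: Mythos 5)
Your proof is correct and follows essentially the same route as the paper's: both rest on Lemma~\ref{lImyx} (each $\ee_n(y^kx^m)$ sits on the single diagonal $k+m$ and is given by a monic polynomial of degree $m$ in the position index), both sort contributions by diagonal so that no cross-cancellation can occur, and both use the leading-degree coefficient to see that some diagonal carries a nonzero polynomial, which therefore has only finitely many zeros. The only cosmetic difference is that you examine every occurring diagonal $d$ and pick a good slot in each, whereas the paper restricts attention to the top homogeneous component $f_l$ and works on diagonal $l$ alone, taking $n\geq 2\deg f$; your version is slightly more explicit but does not change the substance of the argument.
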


\proof We are going to show that  $\ee_n(f) \neq 0$ for $n \geq 2
\deg f$. Suppose that for sufficiently large $n$,   $\ee_n(f)$ is
zero and get a contradiction. Denote by $l$ the  degree of polynomial
$f$, and let $f=f_1 +...+f_l$ be a decomposition of $f \in R$ on the
homogeneous components of degrees $i=1,...,l$ respectively. From
Lemma \ref{lImyx} we know  the shape of the matrix which is an image of a
monomial $y^k x^m$.

Applying Lemma \ref{lImyx} to each homogeneous part of the given
polynomial $f$ we get that
$$
f_l=\sum_{k+m=l}a_{k,m}y^kx^m=\sum_{r=0}^la_ry^{l-r}x^r
$$
is a sum of matrices $\displaystyle \sum_{r=0}^la_rM_r$, where $M_r$
has the vector $[(p(0),...,p(j)]$:
$$
\left(\prod_{i=1}^r(l-r+i),\prod_{i=1}^r(l-r+i+1),\dots,
\prod_{i=1}^r(l-r+i+j),\dots \right)
$$
on the diagonal number l (all other entries are zero). The number on
the $j$-th place of this diagonal is the value in $j$ of the
polynomial
$$P(j)=
(l-r+j) \cdot ...\cdot (l+j-1)$$ of degree exactly $r$. Therefore
the sum $\displaystyle \sum_{r=0}^l a_rM_r$ has a polynomial on $j$
of degree $N=\max\{r:a_r\neq0\}$ on the diagonal number $l$. Since
any polynomial of degree $N$ has at most $N$ zeros we arrive to a
contradiction in the case when $l$th diagonal has length more than
$l$. Hence for any $n \geq 2 \deg f$, \, $\ee_n(f) \neq 0$. $\Box$

Let us recall that an algebra $R$ {\it residually has some property
$P$} means that there exists a system of equivalence relations
$\tau_i$ on $R$ with trivial intersection, such that in the quotient
of $R$ by any $\tau_i$ property $P$ holds.

From the Theorem \ref{t4.1}  we have the following corollary
considering equivalence relations modulo ideals ker $\ee_n$.

\begin{corollary}\label{c4.1} Algebra $R$ is residually finite dimensional.\end{corollary}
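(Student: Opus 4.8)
The plan is to deduce this immediately from Theorem \ref{t4.1} by exhibiting an explicit separating family of finite-dimensional quotients. For each $n$ consider the representation $\ee_n\colon R\to \mathrm{End}\,k^n$ constructed in Section~\ref{descript}; its kernel $\ker\ee_n$ is a two-sided ideal of $R$, and the quotient $R/\ker\ee_n$ is isomorphic to the image $\ee_n(R)\subseteq \mathrm{End}\,k^n$, which is a finite-dimensional $k$-algebra. Thus the congruences $\tau_n$ defined by ``$f\equiv g\pmod{\tau_n}$ iff $f-g\in\ker\ee_n$'' form a system of equivalence relations on $R$ for which every quotient $R/\tau_n\cong\ee_n(R)$ is finite-dimensional. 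By Theorem \ref{t4.1} we have $\bigcap_{n=0}^{\infty}\ker\ee_n=0$, so the intersection of the family $\{\tau_n\}$ is trivial. By the definition of ``residually $P$'' recalled just before the corollary, this is exactly the assertion that $R$ residually has the property of being finite-dimensional.

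There is essentially no obstacle here: the only things to check are that each $\ker\ee_n$ really is two-sided (immediate, since it is the kernel of an algebra homomorphism) and that $R/\ker\ee_n$ is finite-dimensional over $k$ (immediate, since it embeds in the $n^2$-dimensional algebra $M_n(k)$). The substantive content — that the family separates points, i.e. that the intersection of kernels is zero — has already been established in Theorem \ref{t4.1}, whose proof rests on Lemma \ref{lImyx} (the shape of $\ee(y^k x^m)$) together with the observation that a nonzero polynomial of degree $N$ cannot vanish at more than $N$ points, which forces $\ee_n(f)\neq 0$ once $n\geq 2\deg f$.

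So the corollary follows in one line: take $\tau_i=\ker\ee_i$; each quotient is finite-dimensional and $\bigcap_i\ker\ee_i=0$ by Theorem \ref{t4.1}, hence $R$ is residually finite dimensional. $\Box$
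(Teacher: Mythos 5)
Your proof is correct and matches the paper's own argument: the paper also obtains the corollary directly from Theorem~\ref{t4.1} by taking the equivalence relations modulo the ideals $\ker\ee_n$, noting each quotient embeds in $M_n(k)$ and the kernels intersect trivially.
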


\section{Indecomposable modules}\label{ind}

\begin{lemma}\label{lam}
Let $M=(X,Y)$ be a (finite dimensional) indecomposable module over
$R=k\langle x,y|xy-yx=y^2\rangle$. Then $X$ has a unique eigenvalue.
 %\epr
\end{lemma}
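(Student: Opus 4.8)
The plan is to exploit the block structure of $X=\rho(x)$ that was worked out in Theorem~\ref{tdm}: once $Y=\rho(y)$ is put in Jordan normal form corresponding to a partition $\mathcal P=(n_1,\dots,n_m)$, the matrix $X$ must be $\mathcal J$-Toeplitz, i.e.\ its diagonal blocks $A_{ii}$ have the rigid shape $X_{n_i}^0+T_i$ with $T_i$ upper triangular Toeplitz, and its off-diagonal blocks $A_{ij}$ are upper rectangular Toeplitz matrices. The strategy is to show that if $X$ has two distinct eigenvalues then $M$ decomposes, by producing a nontrivial $R$-submodule, i.e.\ a subspace invariant under both $X$ and $Y$ which has an invariant complement.

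First I would reduce to the analysis of the semisimple quotient. By Corollary~\ref{c2.2} and Lemma~\ref{t2.1}, $A/J\cong k[x]/S(x)$ with $S(x)=(x-\lambda_1)\cdots(x-\lambda_r)$, and by Corollary~\ref{ct2.2} the Lagrange-type idempotents $e_i=p_i(X)/p_i(\lambda_i)$ form a complete orthogonal system. If $r\geq 2$, I would lift these to genuine idempotents of $A$ itself — here I can use that $A=\rho(R)$ is finite-dimensional, so idempotents lift modulo the nilpotent ideal $J$ (standard idempotent lifting), yielding orthogonal idempotents $\tilde e_1,\dots,\tilde e_r\in A$ summing to $1$. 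Then $k^n=\bigoplus_i \tilde e_i k^n$ is a decomposition into nonzero subspaces, each of which is an $A$-submodule, hence an $R$-submodule. Since each $\tilde e_i\neq 0$ (it maps to $e_i\neq 0$ in $A/J$) and $r\geq 2$, this exhibits $M$ as a nontrivial direct sum, contradicting indecomposability. Therefore $r=1$, i.e.\ $X$ has a unique eigenvalue.

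Alternatively, and perhaps more in the spirit of the earlier sections, one can argue directly with the $\mathcal B$-Toeplitz/$\mathcal J$-Toeplitz description: put $Y$ in Jordan form; since $Y$ commutes with the eigenprojectors of $X$ only if those projectors are again $\mathcal B$-Toeplitz (Proposition~\ref{cstr}), and a $\mathcal B$-Toeplitz idempotent with two distinct "eigencomponents" splits the underlying space into $Y$-invariant, $X$-invariant complementary pieces. Either route gives the same conclusion. The main obstacle is the idempotent-lifting step (or, in the second approach, checking that the spectral projectors of $X$ genuinely commute with $Y$ rather than merely commuting with $X$): one must be careful that the invariant complement one produces is invariant under \emph{both} generators simultaneously, which is exactly what forces the use of the commuting relation $[X,Y]=Y^2$ and the special structure of $A$ rather than a naive linear-algebra splitting of $X$ alone. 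Once the compatibility of the decomposition with $Y$ is secured, the contradiction with indecomposability is immediate.
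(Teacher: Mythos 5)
Your argument identifies the right target but leaves the crucial step unproved, and the step as written is actually false without further justification. You lift the orthogonal idempotents $e_i\in A/J$ to orthogonal idempotents $\tilde e_1,\dots,\tilde e_r\in A$ (legitimate, since $J$ is nil), and then assert that each $\tilde e_i k^n$ is an $A$-submodule. That conclusion requires the $\tilde e_i$ to be \emph{central} in $A$: for $\tilde e_i M$ to be $A$-invariant one needs $\tilde e_j A\tilde e_i=0$ for $j\neq i$, which forces centrality. Idempotent lifting modulo a nil ideal produces idempotents, but it does not produce \emph{central} ones, and in a genuinely noncommutative $A$ (note $[X,Y]=Y^2\neq 0$) the lift of a central idempotent of $A/J$ need not be central. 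Equivalently, a module decomposition corresponds to an idempotent in the commutant $\mathrm{End}_A(M)$, not merely in $A$. You do flag this yourself — "one must be careful that the invariant complement one produces is invariant under both generators simultaneously" — but you then say "once the compatibility of the decomposition with $Y$ is secured, the contradiction\dots is immediate," without actually securing it. That compatibility \emph{is} the lemma.

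The paper's proof does precisely the missing work, directly and without any lifting: it shows each generalized eigenspace $M_\lambda^X=\ker(X-\lambda I)^m$ is $Y$-invariant. The mechanism is the multiplication formula of Lemma~\ref{l3.1.2} transported through the automorphism $x\mapsto x-\lambda$, giving
\[
(x-\lambda)^N y=\sum_{k=1}^{N+1}\alpha_{k,N}\,y^k(x-\lambda)^{N-k+1}.
\]
Applying this to $u\in M_\lambda^X$ with $N\geq m+l$ (where $Y^l=0$ by Lemma~\ref{l2.1}) makes every summand vanish — either $Y^k=0$ or $(X-\lambda I)^{N-k+1}u=0$ — so $(X-\lambda I)^N Yu=0$ and $Yu\in M_\lambda^X$. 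This is the content that makes the spectral projectors of $X$ commute with $Y$, hence makes them central idempotents of $A$, hence makes the eigenspace decomposition a module decomposition. Your proposal should incorporate an argument of this kind; without it, the appeal to idempotent lifting proves nothing about $Y$-invariance and the proof does not close.
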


\proof Denote by $M_\lambda^X$ the main eigenspace for $X$
corresponding to its eigenvalue $\lambda$:
$M_\lambda^X=\bigcup\limits_{k=0}^\infty {\rm ker}\,(X-\lambda
I)^k$. Obviously $M_\lambda^X={\rm ker}\,(X-\lambda I)^m$, where $m$
is the maximal size of blocks in the Jordan normal form of $X$. It
is well-known that $M=\mathop{\oplus}\limits_{i} M_{\lambda_i}^X$,
where the direct sum is taken over all different eigenvalues
$\lambda_i$ of $X$. We shall show that $M_{\lambda_i}^X$ are in fact
$R$-submodules.

Let $u\in M_{\lambda}^X$, that is $(X-\lambda I)^mu=0$. We calculate
$(X-\lambda I)^nYu$ for arbitrary $n$. Using the fact that the
mapping defined on generators $\phi(x)=x-\lambda$, $\phi(y)=y$
extends to an automorphism of $R$ (see \ref{aut}), we can apply it
to the multiplication  formula from Lemma \ref{l3.1.2} to get
$(x-\lambda)^ny=\sum\limits_{k=1}^{n+1}y^k(x-\lambda)^{n-k+1}$.
Taking into account that $Y^l=0$ for some positive integer $l$, we
can choose $N$ big enough, for example  $N\geq m+l$, such that

$$
(X-\lambda I)^NYu=\sum_{k=1}^{N+1}\alpha_{k,N}Y^k(X-\lambda
I)^{N-k+1}u=0
$$

either due to $(X-\lambda I)^{N-k+1}u=0$ or due to $Y^k=0$.

 This shows that $Yu\in M_\lambda^X$, that is
$M_\lambda^X$ is invariant with respect to $Y$. $\Box$

As an immediate corollary we have the following.

\begin{theorem}\label{pdecomp} Any finite dimensional $R$-module $M$ decomposes into the
direct sum of submodules  $M_{\lambda_i}^X$ corresponding to
different eigenvalues $\lambda_i$ of $X$. \end{theorem}

\begin{corollary}\label{cindlocal} Let $M$ be indecomposable module corresponding to the
representation $\p: R \longrightarrow {\rm End}(k^n)$, and $A_n$ is
the image of this representation. Then $A_n$ is a local algebra, e.i.
$A_n / J(A_n) = k$. \end{corollary}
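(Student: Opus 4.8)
The plan is to deduce Corollary~\ref{cindlocal} directly from Theorem~\ref{pdecomp} together with Lemma~\ref{lam} and Lemma~\ref{t2.1}. First I would observe that, since $M$ is indecomposable, the decomposition $M=\bigoplus_i M_{\lambda_i}^X$ of Theorem~\ref{pdecomp} must be trivial, so $X$ has a single eigenvalue $\lambda$ (this is exactly the content of Lemma~\ref{lam}, but it is worth restating it to fix notation). Now apply Lemma~\ref{t2.1} to the representation $\p$: it identifies $A_n/J(A_n)$ with $\k[x]/(S(x))$, where $S(x)=(x-\lambda_1)\cdots(x-\lambda_k)$ runs over the \emph{distinct} eigenvalues of $X$. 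Since there is only one such eigenvalue, $S(x)=x-\lambda$, hence $A_n/J(A_n)\simeq \k[x]/(x-\lambda)\simeq \k$.

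Alternatively, and perhaps more transparently, I would invoke Proposition~\ref{t2.3}, which states $A_n/J(A_n)=\prod_{i=1}^r \k_i$ with $r$ the number of distinct eigenvalues of $X=\p(x)$. Indecomposability of $M$ forces $r=1$ by Lemma~\ref{lam}, so $A_n/J(A_n)=\k$, i.e.\ $A_n$ is local. Either route is a one-line argument once the preparatory results are in place.

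There is really no hard step here: the whole weight of the corollary sits in Lemma~\ref{lam} (that an indecomposable module has $X$ with a unique eigenvalue) and in the structural computation of $A_n/J(A_n)$ in Lemma~\ref{t2.1} / Proposition~\ref{t2.3}. The only point that needs a word of care is making sure the ``number of distinct eigenvalues'' appearing in Proposition~\ref{t2.3} is indeed $1$ and not $0$: this is immediate because $X$ acts on a nonzero space $k^n$ over an algebraically closed field and therefore has at least one eigenvalue. With that remark the proof is complete.
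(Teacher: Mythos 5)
Your proposal is correct, and your second route (Lemma~\ref{lam} plus Proposition~\ref{t2.3}) is exactly the argument the paper gives. The first route via Lemma~\ref{t2.1} is an equally valid minor variant of the same idea.
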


\proof This follows from the above lemma  and the fact that any
image algebra is basic  with semisimple part  isomorphic to the sun
of $r$ copies  of the field $k$: $\oplus_{r} k $, where $r$ is a
number of different eigenvalues of $X$, which was proved in proposition
\ref{t2.3}.$\Box$

Now using the definition of quiver for the image algebra given in
section \ref{quiv}  and Lemma \ref{radrsq} we have a complete
description of quiver equivalence classes of indecomposable modules.

\begin{corollary}\label{cindQuiv} Quiver corresponding to the indecomposable module has one
vertex. The number of loops is one or two, which is a dimension of
the vector space ${\rm Span}_k\{ \bar X-\lambda I, \bar Y\}$, where
$\bar  X=\phi X$, $\bar  Y=\phi Y$ for $\phi: A \rightarrow A/J^2$.
\end{corollary}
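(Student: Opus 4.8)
The plan is to derive this statement as a direct corollary of three results already in place: Lemma~\ref{lam}, Corollary~\ref{cindlocal}, and Lemma~\ref{radrsq}. First I would invoke Lemma~\ref{lam}: for an indecomposable finite-dimensional module $M=(X,Y)$ the matrix $X$ has a single eigenvalue $\lambda$. This is precisely the hypothesis of Lemma~\ref{radrsq}, and that lemma already delivers both conclusions we want --- that the quiver $Q_A$ of $A=\p(R)$ has a single vertex, and that the number of loops at that vertex equals the dimension of ${\rm Span}_k\{\bar X-\lambda I,\bar Y\}$, which is at most $2$.

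So the work is really just to spell out why Lemma~\ref{radrsq} applies and why its conclusion has the stated shape. The one-vertex claim can also be read off from Corollary~\ref{cindlocal} (indecomposability forces $A$ local, hence $A/J(A)\cong k$ has only the trivial idempotent) or from Corollary~\ref{ct2.2} (vertices correspond to distinct eigenvalues of $X$, of which there is one). Since the only idempotent is the unit, the loop count $\dim_k e(J/J^2)e$ collapses to $\dim_k J/J^2$.

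To pin down $J/J^2$ I would apply Corollary~\ref{radft} with the generator $X-\lambda I$ in place of $X$: this is legitimate because $x\mapsto x-\lambda$, $y\mapsto y$ is an automorphism of $R$ (Theorem~\ref{taut}), so $X-\lambda I$ and $Y$ still satisfy the Jordan relation, and because $X-\lambda I$ has spectrum $\{0\}$ and is therefore nilpotent in $A$. Corollary~\ref{radft} then says $J$ consists of the polynomials in $X-\lambda I$ and $Y$ with zero constant term; modulo $J^2$ everything of length $\geq 2$ dies, so $J/J^2$ is spanned by the classes $\bar X-\lambda I$ and $\bar Y$, giving the asserted value $\dim_k{\rm Span}_k\{\bar X-\lambda I,\bar Y\}\leq 2$. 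No real obstacle remains; the whole argument is a short concatenation of the quoted results, and the only step needing a line of care is checking the hypotheses of Corollary~\ref{radft} for the shifted generator $X-\lambda I$.
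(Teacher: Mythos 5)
Your proposal is correct and follows exactly the route the paper intends: the paper states this corollary as an immediate consequence of Lemma~\ref{radrsq} (combined implicitly with Lemma~\ref{lam} to get the single eigenvalue of $X$) and gives no separate proof, and your argument simply spells out that chain, with the useful additional check that the hypotheses of Corollary~\ref{radft} hold for the shifted generator $X-\lambda I$. Nothing is missing, and the route matches the paper.
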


As another consequence of  Theorem \ref{pdecomp} we can
derive an important information on how  to glue irreducible modules
to get indecomposables. It turns out that it is possible to glue
together nontrivially  only the copies of the same irreducible
module $S_a$.

\begin{corollary}\label{ext} For arbitrary non-isomorphic irreducible   modules
$S_a, S_b$,  $${\rm Ext}^1_R (S_a,S_b)=0, \,\,{\rm if}\,\, a \neq
b.$$
\end{corollary}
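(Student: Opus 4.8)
The statement is that every short exact sequence of $R$-modules $0\to S_b\to M\xrightarrow{\pi} S_a\to 0$ with $a\neq b$ splits. Here $M$ is a two-dimensional module, i.e. a pair $(X,Y)$ of $2\times 2$ matrices satisfying $XY-YX=Y^2$. The plan is to identify $M$ via the eigenspace decomposition of $X$ and show it is forced to be $S_a\oplus S_b$.

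First I would record the form of $X$ on $M$: since $S_b$ is a submodule on which $x$ acts by the scalar $b$ and $S_a=M/S_b$ carries the scalar action of $x$ by $a$, the matrix $X$ is triangular with diagonal entries $b,a$, so its characteristic polynomial is $(t-a)(t-b)$. Because $a\neq b$, $X$ has two distinct eigenvalues. By Theorem \ref{pdecomp}, $M=M_a^X\oplus M_b^X$ as $R$-modules, and each summand is one-dimensional. On a one-dimensional module $Y$ acts as a scalar, but $Y$ is nilpotent by Lemma \ref{l2.1}, hence $Y=0$ on each summand; therefore $M_a^X\cong S_a$ and $M_b^X\cong S_b$ as $R$-modules.

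It remains to see that this abstract decomposition is compatible with the given sequence. The submodule $S_b\subseteq M$ is one-dimensional and $x$ acts on it by $b$, so $S_b\subseteq M_b^X$, and equality holds by dimension count. Consequently $M=S_b\oplus M_a^X$, and the restriction $\pi|_{M_a^X}\colon M_a^X\to M/S_b=S_a$ is an isomorphism of $R$-modules, which is precisely a splitting of the sequence. Hence $\mathrm{Ext}^1_R(S_a,S_b)=0$.

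There is essentially no serious obstacle here; the only points requiring a (routine) word are that nilpotency of $Y$ forces the trivial $Y$-action on the one-dimensional eigenspaces of $X$, and that the given copy of $S_b$ must coincide with the generalized $b$-eigenspace $M_b^X$. Alternatively, if one prefers a self-contained computation, one can instead choose a basis adapted to the sequence, write $X=\left(\begin{smallmatrix}b&\gamma\\0&a\end{smallmatrix}\right)$, $Y=\left(\begin{smallmatrix}0&\delta\\0&0\end{smallmatrix}\right)$, and check that $XY-YX=Y^2$ reduces to $(b-a)\delta=0$; since $a\neq b$ this gives $\delta=0$, so $Y=0$ and $X$ (having distinct eigenvalues) is diagonalizable, again yielding $M\cong S_a\oplus S_b$.
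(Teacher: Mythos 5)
Your proof is correct and follows essentially the same route as the paper: observe that on a two-dimensional extension $M$ the matrix $X$ has distinct eigenvalues $a\neq b$, then invoke the eigenspace decomposition of Lemma~\ref{lam}/Theorem~\ref{pdecomp} to split $M\cong S_a\oplus S_b$. You are a bit more careful than the paper's terse "$M$ is decomposable hence $[M]=0$" in explaining why the given short exact sequence itself splits, and the alternative $2\times 2$ matrix computation at the end is a nice self-contained check, but the underlying idea is the same.
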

\proof Indeed, in Corollary \ref{c3.1} we derive that irreducible
module $S_i$ is one dimensional and given by $X=(a), Y=(0)$, $a \in
k$. If $a \neq b$ then for $[M] \in {\rm Ext}^1_R (S_a,S_b)$,
corresponding $X$ has two different eigenvalues, namely $a$ and $b$.
Then by the above lemma $M$ is decomposable and $[M]=0$. $\Box$

\section{Equivalence of some subcategories in mod$\,R$}\label{eqc}

Let us denote by mod $R(\lambda)$ the full subcategory in mod$\,R$
consisting of modules with the  unique eigenvalue $\lambda$ of $X$:
${\rm mod}\,R(\lambda)=\{M\in {\rm mod}\,R|M=M_\lambda(X)\}$. Let us
define the functor $F_\lambda$ on ${\rm mod}\,R$, which maps a
module $M$ to the module $M_\lambda$ with the following new action
$rm=\phi_\lambda(r)m$, where $\phi_\lambda$ is an automorphism of
$R$ defined by $\phi_\lambda(x)=x+\lambda$, $\phi_\lambda(y)=y$. The
restriction of $F_\lambda$ to ${\rm mod}\,R(\lambda) $ is an
equivalence of categories $F_\lambda:{\rm mod}\,R(\lambda)\to {\rm
mod}\,R(\mu+\lambda)$ for any $\mu\in k$. In particular, we have an
equivalence of the categories ${\rm mod}\,R(\lambda)$ and ${\rm
mod}\,R(0)$.

To use this equivalence of categories it is necessary to know that
in most cases (but not in all of them), the eigenvalues of the
matrix $X$ are just entries of the main diagonal in the standard
shape of the matrix described in the Theorem \ref{tdm}, more
precisely.

\begin{theorem}\label{tnn} Let in the
basis ${\cal E}$ of the representation vector space, $Y$ is in the
Jordan normal form, and Jordan blocks have pairwise different sizes:
$n_1,n_2,\dots,n_k$. Then in the same basis $X$ is a ${\cal J}$-Toeplitz
matrix corresponding to the partition ${\cal P}=n_1,n_2,\dots,n_k$ with
numbers $\lambda_1,\dots,\lambda_k$ on the
diagonals of the main blocks, where $\lambda_j$ are eigenvalues of
$X$ (not necessarily different).
\end{theorem}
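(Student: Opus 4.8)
The plan is as follows. Since Theorem~\ref{tdm} already provides that $X$ is a $\mathcal{J}$-Toeplitz matrix for $\mathcal{P}$ in the basis $\mathcal{E}$, the only thing left to establish is that the scalars $\lambda_1,\dots,\lambda_k$ sitting on the main diagonals of the diagonal blocks of $X$ are precisely the eigenvalues of $X$, with $\lambda_i$ of multiplicity $n_i$; equivalently, that $\det(tI-X)=\prod_{i=1}^{k}(t-\lambda_i)^{n_i}$. First I would relabel the blocks so that $n_1>n_2>\dots>n_k$ (here pairwise distinctness is used) and write $X=(A_{pq})_{p,q=1}^{k}$ in block form along $\mathcal{P}$, so that $A_{ii}=X_{n_i}^{0}+T_i$ with $T_i$ upper triangular Toeplitz of constant main diagonal $\lambda_i$, and, for $p\neq q$, $A_{pq}$ is a rectangular upper triangular Toeplitz block with $A_{pq}J_{n_q}=J_{n_p}A_{pq}$. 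The one genuinely preparatory computation is to analyse this intertwining relation entrywise (exactly as in the proof of Theorem~\ref{tdm}): the entries of $A_{pq}$ depend only on (column index minus row index), they vanish on all strictly sub-main diagonals, and, when the source block is strictly larger ($n_q>n_p$), they vanish in addition on the first $n_q-n_p$ of the remaining diagonals; so $A_{pq}$ is supported on diagonals numbered $\geq\max(0,\,n_q-n_p)$.

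Next I would produce an $X$-stable filtration. By Lemma~\ref{l3.1.1}, in $\mathrm{End}(k^n)$ one has $XY^{r}=Y^{r}X+rY^{r+1}$ for every $r\geq1$, so $Y^{r}v=0$ forces $Y^{r}(Xv)=XY^{r}v-rY(Y^{r}v)=0$; hence each $K_r:=\ker Y^{r}$ is $X$-invariant, and $0\subset K_1\subset K_2\subset\dots\subset K_{n_1}=k^{n}$ is an $X$-stable flag with $\dim(K_r/K_{r-1})=\#\{i:n_i\geq r\}$, read off from the Jordan form of $Y$. On the subquotient $W_r:=K_r/K_{r-1}$ the operator $Y$ acts by $0$ (it sends $K_r$ into $K_{r-1}$), so $W_r$ is just a $k[x]$-module, and it is enough to compute the characteristic polynomial of $X$ on each $W_r$.

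Writing $e^{(i)}_{s}$ for the $s$-th standard basis vector of the $i$-th Jordan block of $Y$, one has $K_r=\bigoplus_i\langle e^{(i)}_{1},\dots,e^{(i)}_{\min(r,n_i)}\rangle$, and the classes $\bar e^{(i)}_{r}$ for $i$ with $n_i\geq r$ form a basis of $W_r$. From the explicit shape of $A_{ii}$, the $r$-th column of $A_{ii}$ equals $\lambda_i e^{(i)}_{r}$ plus a combination of $e^{(i)}_{1},\dots,e^{(i)}_{r-1}\in K_{r-1}$; from the support bound of the first step, the $r$-th column of $A_{pi}$ ($p\neq i$) lies in $K_{r-1}$ whenever $n_p<n_i$, and lies in $\langle e^{(p)}_{1},\dots,e^{(p)}_{r}\rangle$ whenever $n_p>n_i$, hence reduces modulo $K_{r-1}$ to a scalar multiple of $\bar e^{(p)}_{r}$. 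Ordering the basis $\{\bar e^{(i)}_{r}\}$ by decreasing $n_i$, the matrix of $X|_{W_r}$ is then upper triangular with diagonal entries $(\lambda_i)_{n_i\geq r}$, so $\det(tI-X|_{W_r})=\prod_{i:\,n_i\geq r}(t-\lambda_i)$. Multiplying over the flag, $\det(tI-X)=\prod_{r=1}^{n_1}\prod_{i:\,n_i\geq r}(t-\lambda_i)=\prod_{i=1}^{k}(t-\lambda_i)^{n_i}$, since $(t-\lambda_i)$ appears exactly for $r=1,\dots,n_i$; this is the assertion.

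The part I expect to demand the most care is the last one: for each off-diagonal block $A_{pi}$ one must pin down precisely which entries of its $r$-th column survive modulo $K_{r-1}$, and this is controlled entirely by the diagonal-shift-by-$(n_q-n_p)$ property of Toeplitz intertwiners from the first step. It is also exactly here that pairwise distinctness of the $n_i$ is indispensable: two Jordan blocks of equal size could contribute a surviving off-diagonal entry to some $W_r$, breaking the triangularity — which is the mechanism behind the remark that the conclusion holds in most, but not all, cases.
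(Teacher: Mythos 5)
Your proof is correct, and it takes a genuinely different route from the paper's. The paper argues algebraically: it introduces an auxiliary subalgebra $\mathcal{A}\subset M_n(k)$ (containing every $\mathcal{J}$-Toeplitz matrix for the given $\mathcal{P}$) together with the block-diagonal projection $\phi:\mathcal{A}\to\mathcal{D}$, observes that $\phi$ is a ring homomorphism \emph{because} the block sizes are pairwise distinct, notes that $\ker\phi$ consists of nilpotent matrices, and then deduces by a two-way invertibility argument that $A$ and $\phi(A)$ have the same spectrum for every $A\in\mathcal{A}$. You instead build the $X$-stable flag $K_r=\ker Y^r$ from the relation $XY^r=Y^rX+rY^{r+1}$, pass to the subquotients $W_r=K_r/K_{r-1}$ on which $Y$ acts trivially, and use the diagonal-shift bound on the Toeplitz intertwiners to make $X|_{W_r}$ upper triangular in the ordering by decreasing block size — this is precisely where pairwise distinctness enters your argument, just as it enters the paper's in making $\phi$ multiplicative. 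The paper's route is more uniform (it proves the spectral statement for arbitrary $A\in\mathcal{A}$, not only those actually arising from representations of $R$), whereas your flag computation has the advantage of delivering the exact multiplicity $n_i$ of each $\lambda_i$ directly from $\det(tI-X)=\prod_i(t-\lambda_i)^{n_i}$ and of making the failure in the equal-block-size case visible as a concrete surviving off-diagonal entry in some $W_r$.
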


\proof Let us first introduce the denotation for the basis $\cal E$:

$$
e^{1,1},\dots,e^{1,n_1},e^{2,1},\dots,e^{2,n_2},\dots,
e^{k,1},\dots,e^{k,n_k}.
$$

Consider the set $\cal A$ of matrices (in the basis ${\cal E}$) such
that $A_{(j,l),(j,l)}=c_j$, $1\leq j\leq k$, $1\leq l\leq n_j$ and
$A_{(i,s),(j,l)}=0$ if $n_j<n_i$, $l>s-n_j$ and if $n_j>n_i$,
$l>s$. One can easily verify that $\cal A$ is an algebra with
respect to the matrix multiplication. Let also $\cal D$ be the
subalgebra of diagonal matrices in ${\cal A}$ and $\phi:{\cal A}\to
{\cal D}$ be the natural projection ($\phi$ acts by annihilating the
off-diagonal part of a matrix).

Looking at the multiplication in $\cal A$ it is straightforward
that $\phi$ is an algebra morphism, under the condition that $n_1,...,n_k$
are pairwise different numbers, that is $\phi(I)=I$,
$\phi(AB)=\phi(A)\phi(B)$ and $\phi(A+B)=\phi(A)+\phi(B)$. It is
also easy to check, calculating the powers of the matrix, that if
$A\in\cal A$ and $\phi(A)=0$ then the matrix $A$ is nilpotent. Since
${\cal J}$-Toeplitz matrices  belong to $\cal A$, it suffices
to verify that the eigenvalues of any $A\in \cal A$ coincide with
the eigenvalues of $\phi(A)$.

First, suppose that $\lambda$ is not an eigenvalue of $A$. That is
the matrix $A-\lambda I$ is invertible: there exists a matrix
$B\in\cal A$ such that $(A-\lambda I)B=I$. Here we use the fact that
if a matrix from a subalgebra of the matrix algebra is invertible,
then the inverse belongs to the subalgebra. Then $\phi((A-\lambda
I))\phi(B)=\phi((A-\lambda I)B)=\phi(I)=I$. Therefore $\lambda$ is
not an eigenvalue of $\phi(A)$. On the other hand, suppose that
$\lambda$ is not an eigenvalue of $\phi(A)$. Then $\phi(A)-\lambda
I$ is invertible. Clearly
$$
A-\lambda I=(\phi(A)-\lambda I)(I+(\phi(A)-\lambda
I)^{-1}(A-\phi(A))).
$$
Let $B=(\phi(A)-\lambda I)^{-1}(A-\phi(A))$. Since $\phi$ is a
projection, we have that
$$
\phi(B)=\phi((\phi(A)-\lambda I)^{-1})(\phi(A)-\phi(A))=0.
$$
As we have already mentioned this means that the matrix $B$ is
nilpotent and therefore $I+B$ is invertible. Hence $A-\lambda
I=(\phi(A)-\lambda I)(I+B)$ is invertible as a product of two
invertible matrices. Therefore $\lambda$ is not an eigenvalue of
$A$. Thus, eigenvalues of $A$ and $\phi(A)$ coincide. This completes
the proof. $\Box$

\section{Analogue of the Gerstenhaber theorem for commuting
matrices}\label{Ger}

In this section we intend to prove an analog of the
Gerstenhaber-Taussky-Motzkin theorem (see \cite{Ger}, \cite{MT},
\cite{Gur}) on the dimension of images of representations of two
generated algebra of commutative polynomials $k[x,y]$. This theorem
says that any algebra generated by two matrices $A,B\in M_n(k)$ of
size $n$ which commute $AB=BA$ has dimension not exceeding $n$. It
was proved using different means, for example,  in \cite{Gur} one
can find arguments, where irreducibility of commuting variety is
involved and in \cite{W} purely module theoretic methods were used.

Instead of commutativity we consider the relation $XY-YX=Y^2$ and
prove the following

\begin{theorem}\label{tG}
 Let $\rho_n:R\to M_n(k)$ be an arbitrary
$n$-dimensional representation of $R=k \langle x,y|xy-yx=x^2
\rangle$ and $A_n=\rho_n(R)$ be the image algebra. Then the
dimension of $A_n$ does not exceed $\frac{n(n+2)}4$ for even $n$ and
$\frac{(n+1)^2}4$ for odd $n$.

This estimate  is optimal and attained for the image algebra
corresponding to the full block $Y$.

\end{theorem}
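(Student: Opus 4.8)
The plan is to reduce the problem to the local case and then reuse the structure theory already developed. First I would observe that by Theorem~\ref{pdecomp}, any representation $\rho_n$ decomposes $k^n$ into the direct sum of the generalized eigenspaces $M_{\lambda_i}^X$, and the image algebra $A_n$ decomposes correspondingly as a direct sum of the image algebras $A_{n_i}$ of the pieces (one factor per distinct eigenvalue of $X$, of sizes $n_i$ summing to $n$). Since $\sum \dim A_{n_i}$ is maximized, for a fixed partition $n=\sum n_i$, by concentrating all of $n$ into a single block (the bound $f(n)=n(n+2)/4$ or $(n+1)^2/4$ being superadditive, $f(a)+f(b)\le f(a+b)$, which is an elementary inequality to check separately for the even/odd parities), it suffices to prove the estimate when $X$ has a single eigenvalue. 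Using the equivalence $F_\lambda$ of Section~\ref{eqc} (equivalently the automorphism $x\mapsto x-\lambda$, which preserves the defining relation and hence the dimension of the image algebra), I may assume that the unique eigenvalue is $0$, so that both $X$ and $Y$ are nilpotent, and by Corollary~\ref{radft} the radical $J$ of $A_n$ consists of all polynomials in $X,Y$ without constant term, with $A_n/J=k$.

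Next I would bound $\dim A_n=1+\dim J$ by bounding the dimensions of the successive radical powers $J\supseteq J^2\supseteq\cdots$. Here the key structural input is the multiplication calculus of Section~\ref{mult}: by Lemma~\ref{l3.1.1} the relation $[X,Y]=Y^2$ lets one move every occurrence of $X$ to the right past every $Y$ at the cost of raising the $Y$-degree, so every element of $A_n$ is a linear combination of the normal monomials $Y^aX^b$. Thus $A_n$ is spanned by $\{Y^aX^b\}$, and since $Y^m=0$ and $X^r=0$ for the nilpotency degrees $m,r$, we get a first crude bound $\dim A_n\le mr$. To sharpen this into the stated quadratic bound I would track, diagonal by diagonal, which monomials $Y^aX^b$ can be linearly independent: in the full-block model $\varepsilon_n$ of Section~\ref{rfd}, Lemma~\ref{lImyx} tells us $\varepsilon_n(Y^aX^b)$ is supported on the single superdiagonal numbered $a+b$ and carries a polynomial of degree exactly $b$ in the diagonal parameter; so on the diagonal numbered $d$ the reachable matrices form the space of polynomials of degree $<\min(d+1,\,\ldots)$, and summing $\min(d+1,n-d)$ over $d=0,\dots,n-1$ produces exactly $n(n+2)/4$ for even $n$ and $(n+1)^2/4$ for odd $n$. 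The general single-eigenvalue case is then dominated by this count because the relation allows only the same kind of degree-raising commutation, so no additional linearly independent monomials can appear beyond those visible on each diagonal.

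Finally, for the optimality claim I would exhibit the full-block representation $\varepsilon_n$ explicitly and show its image algebra attains the bound: by Lemma~\ref{lImyx} the matrices $\varepsilon_n(Y^aX^b)$ for $a+b=d$ realize, on the $d$-th diagonal, all polynomials of degree up to $\min(d,n-1-d)$ in the position parameter (a Vandermonde-type nondegeneracy, since the polynomial values $\prod_{i=1}^b(a+j+i)$ at the relevant nodes $j$ span the space of polynomials of that degree), and these are linearly independent across different diagonals; counting gives precisely $\sum_{d=0}^{n-1}\bigl(\min(d,n-1-d)+1\bigr)=f(n)$, so $\dim A_n=f(n)$ in that stratum.

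\medskip

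The main obstacle I anticipate is the sharp upper bound in the single-eigenvalue case: the crude estimate $\dim A_n\le mr$ is far from $f(n)$, and turning the diagonal-by-diagonal heuristic (which is transparent only for the model $\varepsilon_n$) into a rigorous ceiling valid for \emph{every} pair $(X,Y)$ with $X,Y$ nilpotent and $[X,Y]=Y^2$ requires care. One clean way is to filter $A_n$ by the ideal $\mathrm{id}(Y)$ and its powers: the relation forces the associated graded pieces to be cyclic over $k[X]/(X^{?})$ of controlled length, and adding up these lengths along the $Y$-adic filtration reproduces the same arithmetic sum $\sum_{d}\min(d+1,n-d)$. Verifying that the lengths of these graded pieces genuinely do not exceed what the model predicts — i.e., that the commutation relation cannot create "extra room" on any diagonal — is the delicate point, and it is where I would spend most of the effort.
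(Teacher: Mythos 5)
Your skeleton matches the paper's: reduce to the case of a single eigenvalue of $X$ (you do it by the eigenspace decomposition of Theorem~\ref{pdecomp} plus superadditivity of $f(n)=\lfloor(n+1)^2/4\rfloor$; the paper goes via indecomposables, Lemma~\ref{lam}, and convexity of $n^2$, which amounts to the same thing), then compute the full--block case from Lemma~\ref{lImyx}, which you do correctly and which agrees with Lemma~\ref{tGfull-bl}. The genuine gap is exactly where you flag it: the sharp upper bound when $X$ and $Y$ are both nilpotent. The step \textquotedblleft the general single-eigenvalue case is then dominated by this count because the relation allows only the same kind of degree-raising commutation\textquotedblright{} is not an argument: for a general nilpotent pair, $Y^aX^b$ is supported on all diagonals numbered $\ge a+b$, not on a single one, so the diagonal-by-diagonal count that works for $\varepsilon_n$ does not transfer.

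The paper's missing ingredient is a two-projection trick (Lemmas~\ref{ltr}, \ref{tGerst}). After simultaneously triangularizing $X$ and $Y$, split the space $UT_n$ of upper-triangular matrices as $L_1\oplus L_2$, where $L_1$ lives on the lower $l-1$ superdiagonals and $L_2$ on the remaining upper ones, with $l=(n+1)/2$ or $n/2+1$. Then $\dim A_n\le \dim P_1(A_n)+\dim P_2(A_n)$, and each summand is bounded by a \emph{different} mechanism: $P_1(A_n)$ is spanned by the projections $P_1(Y^kX^m)$, and any $Y^kX^m$ with $k+m$ large enough is supported entirely in $L_2$ and projects to $0$, so $\dim P_1(A_n)$ is at most the number of pairs $(k,m)$ with small $k+m$; while $P_2(A_n)\subseteq L_2$ is bounded trivially by $\dim L_2$, i.e.\ by the total number of matrix entries in the upper half of the diagonals. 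Summing $1+\cdots+(l-1)$ and $1+\cdots+(n-l+1)$ gives exactly $f(n)$. No single one of these bounds works alone; the split is what makes the estimate sharp.

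Your suggested fix via the $\mathrm{id}(Y)$-adic filtration does not obviously work and is not carried out. Already the degree-zero graded piece $A_n/\mathrm{id}(Y)\cong k[X]/(X^r)$ has dimension $r$ (the nilpotency degree of $X$), which can be as large as $n-1$, not the required $\min(1,n)=1$; so that filtration does not produce $\sum_d\min(d+1,n-d)$. A Jacobson-radical filtration gives $\dim\mathrm{gr}^d\le d+1$ from the spanning set $\{Y^aX^b:a+b=d\}$, but the companion bound $\dim\mathrm{gr}^d\le n-d$ needed for the upper half of the sum is not automatic: the inclusion $\mathrm{Jac}^d\subseteq\{\text{matrices supported on diagonals}\ge d\}$ bounds $\dim\mathrm{Jac}^d$ but not the graded quotient. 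So the \textquotedblleft delicate point\textquotedblright{} you identify is indeed the whole theorem, and the filtration sketch leaves it unresolved; the paper's $L_1\oplus L_2$ decomposition is what closes it.
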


We divide the proof in two lemmas. Start with the second statement
of the theorem, that is calculation of the dimension of the image
algebras in the full block case.

Let us note first the following

\begin{lemma}\label{unim}
Let $\rho_n:R \to M_n(k)$ be an $n$ dimensional representation of
$R$, where $Y=\rho(y)$ has a full block Jordan structure. Then
$\rho_n$ is auto-equivalent to the fixed representation $\ee_n, \,
\ee_n(x)=X_n^0, \, \ee_n(y)=J_n$. For any $n$ the image
 algebra $A_n=\rho_n(R)$
does not depend on the choice of $\rho_n$. \end{lemma}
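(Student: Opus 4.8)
The plan is to prove Lemma~\ref{unim} in two stages: first that any full-block representation is auto-equivalent to $\ee_n$, then that the image algebra is the same for all such representations. For the first stage, recall from the description in Section~\ref{descript} (the "full Jordan block case" in the proof accompanying Theorem~\ref{tdm}) that when $Y=J_n$, every solution $X$ of $[X,J_n]=J_n^2$ has the form $X=X_n^0+T$ where $T$ is an arbitrary upper triangular Toeplitz matrix. Since $T$ is a polynomial in $J_n$ (upper triangular Toeplitz matrices are exactly $k[J_n]$), we have $T=q(Y)$ for some $q\in k[y]$, so $[q(Y),Y]=0$. Thus $X=\ee_n(x)+q(Y)$ while $Y=\ee_n(y)$. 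Now invoke the automorphism $\phi\in\mathrm{Aut}(R)$ given by $x\mapsto x+q(y)$, $y\mapsto y$ (this is an automorphism by Theorem~\ref{taut}, taking $\alpha=1$, $p=q$; it was also noted explicitly in Section~\ref{rfd} that automorphisms of this type produce all full-block representations from $\ee_n$). Then $\rho_n\circ\phi$ sends $x\mapsto \rho_n(x+q(y))=X+q(Y)$... wait, one must be careful with direction: we want $\rho_n$ with $\rho_n(x)=X_n^0+q(Y)$ to factor as $\ee_n\circ\psi$ for a suitable $\psi$. Taking $\psi:x\mapsto x+q(y)$, $y\mapsto y$ gives $\ee_n(\psi(x))=\ee_n(x)+q(\ee_n(y))=X_n^0+q(J_n)=X$ and $\ee_n(\psi(y))=J_n=Y$, so indeed $\rho_n=\ee_n\circ\psi$, which exactly says $\rho_n\sim_A\ee_n$ (in fact with no conjugation needed). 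For a general full-block representation, $Y$ need not literally be $J_n$ but is conjugate to it, so one first conjugates to reduce to $Y=J_n$, absorbing the conjugation into the "$\sim$" part of "$\sim_A$".

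For the second stage — that $A_n=\rho_n(R)$ does not depend on the choice of $\rho_n$ — I would argue that the image algebra is invariant under both operations that relate an arbitrary full-block $\rho_n$ to $\ee_n$. Conjugation $g(-)g^{-1}$ replaces $A_n$ by $gA_ng^{-1}$, which has the same dimension but is literally a different subalgebra of $M_n(k)$; however, in the reduction above the only genuine freedom is the choice of $q\in k[y]$, and for a fixed Jordan form $Y=J_n$ the subalgebra generated by $X=X_n^0+q(J_n)$ and $J_n$ equals the subalgebra generated by $X_n^0$ and $J_n$, since $q(J_n)$ lies in the subalgebra generated by $J_n$ alone. Hence $A_n=\ee_n(R)$ exactly whenever $Y=J_n$. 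The cleaner way to state the conclusion is therefore: once $Y$ is normalized to the standard Jordan block $J_n$, the image algebra is precisely $k[X_n^0,J_n]$, independent of $X$; different choices of basis conjugate this in the obvious way, so "the image algebra" is well-defined up to isomorphism (and on the nose, once the basis is fixed so that $Y=J_n$). For the dimension count that Lemma~\ref{unim} feeds into, it suffices to identify this fixed algebra $k[X_n^0,J_n]\subseteq M_n(k)$ and compute its dimension.

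The main obstacle I anticipate is purely bookkeeping-of-equivalence: being careful about what "$\rho_n$ does not depend on the choice of $\rho_n$" means, since literally the image is a concrete subalgebra of $M_n(k)$ and conjugation does change it. The honest statement is that all full-block image algebras are mutually isomorphic, and become identical once one insists $Y$ be the fixed matrix $J_n$; I would phrase the lemma's proof so that this is explicit, reducing first to $Y=J_n$ by conjugation and then observing that among those the image algebra is literally $k[X_n^0,J_n]$ regardless of the Toeplitz part. Everything else — that upper triangular Toeplitz matrices form $k[J_n]$, that $x\mapsto x+q(y)$ is an automorphism, that $\ee_n\circ\psi=\rho_n$ — is routine given Theorem~\ref{taut} and the computations in Section~\ref{descript} and Section~\ref{rfd}.
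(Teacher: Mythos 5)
Your proof is correct and follows essentially the same route as the paper: reduce to $Y=J_n$ by conjugation, observe that the residual freedom in $X$ is an element of $k[J_n]$, and realize that freedom via the automorphism $x\mapsto x+q(y)$. The only cosmetic difference is that the paper derives ``$\rho'(x)-\ee(x)\in k[J_n]$'' from the centralizer/nonderogatory argument (Proposition~\ref{nd}), while you read it off from the explicit Toeplitz description in Theorem~\ref{tdm} — these are the same fact. Your explicit caveat about what ``the image algebra does not depend on $\rho_n$'' should mean (literal equality once $Y$ is normalized to $J_n$, isomorphism in general) is a fair clarification of a point the paper states more tersely.
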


We want to emphasize that the fact we will use here, that $Y$
commutes only with polynomials on $Y$, is specific for the full
block case. We will try to explain why it is so in the course of the
proof.

Let us remind that a matrix $Y \in M_n(k)$ called {\it
non-derogatory} if its characteristic polynomial coincides with the
minimal polynomial, or if any eigenspace has  dimension 1.

It is well-known that
\begin{proposition}\label{nd}
The matrix $Y$ is non-derogatory iff $C(Y)=Alg(Y)$, where $C(Y)$ is
a centralizer of $Y$ in $M_n(k)$ and $Alg(Y)$ --- algebra generated
by $Y$.

\end{proposition}

\proof Consider a representation $\rho \,' \sim \rho$, such that
 $\rho \,'(Y)= J_n$ is a full Jordan block. Denote $Y$=$\rho \,
 (y)=\ee(y)$. Since both pairs $\rho \,'(x) $,  $\rho \,'(y) $ and $
 \ee(x)$, $ \ee(y)$ satisfy the algebra relation, we have $[Y, \rho \,'(x) -\ee(x)]=0. $
 Now, since $Y$ is nilpotent it has one block Jordan structure iff
 it is non-derogatory. This means that only
in case of one block $Y$  we will have that $ \rho \,'(x) -\ee(x)
\in C(Y)$ is a polynomial on $Y$ (due to Proposition \ref{nd}). So,
if indeed, $Y=J_n$, then $ \rho \,'(x) =\ee(x) = p(Y), \, p \in
k[t]$ and $ \rho \,'(x) =\ee(\phi(x)), \,  \rho \,'(y)
-\ee(\phi(y))$, where $\phi(x)=x+p(y), \phi(y)=y$, that is $\phi \in
{\rm Aut} R$. This means that $ \rho \,'$ and $\ee$ are
auto-equivalent, hence also $ \rho$ and $\ee$. From this immediately
follows that they have the same image algebras.
$\Box$

\begin{lemma}\label{tGfull-bl}
 Let $X,Y\in M_n(k)$ be matrices of  size $n$ over the field
$k$, satisfying the relation $XY-YX=Y^2$  and $Y$ has as a Jordan
normal form one full block. Denote by ${A}$ the algebra generated by
$X$ and $Y$. Then for odd $n$, ${\rm dim}\,
A=\frac{(n+1)^2}{4}$ and for even $n$, ${\rm dim}\,
A=\frac{n(n+2)}{4}$.

\end{lemma}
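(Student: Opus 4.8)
By Lemma \ref{unim} it suffices to compute $\dim A$ for the distinguished representation $\ee_n$, where $Y=J_n$ is the full Jordan block and $X=X_n^0$ is the matrix with the arithmetic sequence $0,1,2,\dots,n-1$ on the first upper diagonal and zeros elsewhere. So the plan is to describe $A=\mathrm{Alg}(X_n^0,J_n)$ very explicitly and then count a basis. The key computational input is Lemma \ref{lImyx}: the image $\ee_n(y^kx^m)$ of a basis monomial of $R$ is supported on the single diagonal number $k+m$, and the entries along that diagonal are the values $p(0),p(1),\dots$ of the polynomial $p(j)=\prod_{i=1}^m(k+j+i)$, which has degree exactly $m$ in $j$.

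First I would observe that $A$ is spanned by the matrices $\ee_n(y^kx^m)$ (since the $y^kx^m$ span $R$ by Lemma \ref{l4.1}), and then organize this span by diagonal number. For a fixed diagonal $d$ ($0\le d\le n-1$), the matrices supported on diagonal $d$ that lie in $A$ are exactly the linear combinations of the $\ee_n(y^kx^m)$ with $k+m=d$; by Lemma \ref{lImyx} each such monomial contributes the sequence of values on diagonal $d$ of a polynomial of degree $m$ in the position index $j$, where $j$ runs over $0,1,\dots,n-1-d$ (so diagonal $d$ has $n-d$ entries). As $m$ ranges over $0,1,\dots,d$ we get polynomials of every degree from $0$ up to $d$; hence the space of "sequences along diagonal $d$ obtainable inside $A$" equals the space of restrictions to $\{0,1,\dots,n-1-d\}$ of polynomials of degree $\le d$. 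Its dimension is therefore $\min(d+1,\,n-d)$: it is $d+1$ when $d+1\le n-d$, i.e. $d\le (n-1)/2$, and it is the full $n-d$ otherwise. I would double-check the edge cases (e.g. $d=0$ gives the scalar matrices, dimension $1$; and for large $d$ the truncated diagonal is too short to see the higher-degree polynomials, so it caps at $n-d$).

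It remains to sum these local dimensions over $d=0,\dots,n-1$, which gives $\dim A=\sum_{d=0}^{n-1}\min(d+1,n-d)$. I would split the sum at the point where $d+1$ overtakes $n-d$: for $n=2\ell$ one gets $\sum_{d=0}^{\ell-1}(d+1)+\sum_{d=\ell}^{2\ell-1}(2\ell-d)=\frac{\ell(\ell+1)}{2}+\frac{\ell(\ell+1)}{2}-\ell$ after reindexing, which simplifies to $\ell(\ell+1)-\ell=\ell^2= \frac{n^2}{4}$? — here I would be careful: the correct bookkeeping should land on $\frac{n(n+2)}{4}$ for even $n$ and $\frac{(n+1)^2}{4}$ for odd $n$, so I would recount the two arithmetic-progression sums precisely rather than trusting a quick estimate. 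The only genuine subtlety — the one place the argument could go wrong — is the claim that the monomials with $k+m=d$ really realize \emph{all} polynomials of degree $\le d$ along diagonal $d$ (not a proper subspace), and that distinct diagonals contribute independently; the first follows because for each fixed $m$ the polynomial $p(j)=\prod_{i=1}^m(k+j+i)$ has leading term $j^m$ independent of $k$, so varying $m=0,\dots,d$ gives a triangular (hence invertible) system of leading coefficients, and the second is immediate since matrices supported on different diagonals are linearly independent. Summing up the diagonal contributions then yields the stated formula.
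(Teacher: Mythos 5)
Your approach is essentially the paper's: both reduce via Lemma \ref{unim} to the distinguished representation $\ee_n$, apply Lemma \ref{lImyx} to see that the part of $A$ supported on the $d$-th upper diagonal is the space of restrictions to $\{0,\dots,n-1-d\}$ of polynomials of degree $\le d$ (dimension $\min(d+1,\,n-d)$), and sum over $d$; your extra remark about the triangular system of leading coefficients just makes explicit a linear-independence claim the paper leaves implicit. The slip you flagged is indeed only an arithmetic one: for $n=2\ell$ the second partial sum is $\sum_{d=\ell}^{2\ell-1}(2\ell-d)=\ell+(\ell-1)+\dots+1=\ell(\ell+1)/2$, not $\ell(\ell+1)/2-\ell$, giving total $\ell(\ell+1)=n(n+2)/4$ as required (and similarly $(m+1)^2=(n+1)^2/4$ for $n=2m+1$).
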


\proof In  Lemma \ref{lImyx} we already have computed the matrices,
which are images of monomials $y^kx^m$ under the representation
$\ee: (x,y)\mapsto (X^0,J_n)$. Due to the Lemma \ref{unim} we have
only one image algebra  for any $n$. In order to calculate its
dimension,
let us recall how matrices $\epsilon(y^kx^m)$ look like and
calculate the dimensions of their linear spans.

 The matrix
$\epsilon(y^{l-r}x^r)$ on the $l$-th upper diagonal has a vector
$(p(0),p(1),\dots)$, where
$$
p(j)=(l-r+j)\dots(l+j-1)=\prod_{i=1}^r(l+j-r+i)
$$
and zeros elsewhere. In the $j$-th place of the $l$-th diagonal we
have a value of a polynomial of degree exactly $r$. Those diagonals
which have number less then  the number of elements in it give the
impact to the dimension equal to the dimension of the space of
polynomials of corresponding degree. When the diagonals become
shorter (the number of elements less then the number of the
diagonal) then the impact to the dimension of this diagonal equals
to the number of the elements in it. Thus, if $n=2m+1$, ${\rm dim}\,
A=1+\dots+m+(m+1)+m+\dots+1=(m+1)^2=\frac{(n+1)^2}{4}$. When $n=2m$,
we have ${\rm dim}\, A=1+\dots+m+m+\dots+1=m(m+1)=\frac{n(n+1)}{4}$.
$\Box$

\subsection{Maximality of the dimension in the full block case}

\def\frac#1#2{{#1}\over{#2}}
\def\text#1{{\rm {#1}}}

We know now that any representation of $R$,
which is isomorphic to one with $Y$ in the full block Jordan normal form
gives us as an image  the same algebra, described in Lemma
\ref{tGfull-bl} as a certain set of matrices, of dimension
${n(n+2)\over 4}$ for even $n$ and $\frac{(n+1)^2}4$ for odd $n$.
We intend to prove that this dimension is maximal among dimensions
of all image algebras for arbitrary representation, that is the
first part of Theorem \ref{tG}.

We start with the proof  that this dimension is an upper bound for
any image algebra of an indecomposable representation.

A simple preliminary fact we will need is the following.

\begin{lemma}\label{ltr}
Matrices  $X$ and $Y$ satisfying the relation $XY-YX=Y^2$ can be by
simultaneous conjugation brought to a triangular form.
\end{lemma}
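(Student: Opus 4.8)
Matrices $X$ and $Y$ satisfying $XY-YX=Y^2$ can be simultaneously conjugated to triangular form.

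The plan is to find a common flag of invariant subspaces, i.e. a chain $0 = V_0 \subset V_1 \subset \cdots \subset V_n = k^n$ with $\dim V_i = i$ and $XV_i \subseteq V_i$, $YV_i \subseteq V_i$; choosing a basis adapted to this flag then exhibits both matrices in upper triangular form. The standard way to produce such a flag is to produce, at each stage, a common eigenvector for the pair acting on the quotient, and then induct on dimension. So the heart of the matter is the following claim: if $X,Y$ act on a nonzero finite-dimensional space $V$ over the algebraically closed field $k$ with $XY-YX=Y^2$, then there is a nonzero $v \in V$ that is a common eigenvector of $X$ and $Y$.

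To establish the common eigenvector, first invoke Lemma~\ref{l2.1} (really its linear-algebra generalization, Lemma~\ref{nilnil}, which applies since the commutator $Y^2$ commutes with $Y$): the matrix $Y$ is nilpotent. Hence $\ker Y \neq \{0\}$. Now I would show that $\ker Y$ is $X$-invariant: if $Yv=0$, then from $XY-YX=Y^2$ applied to $v$ we get $-YXv = Y^2 v = 0$, so $Xv \in \ker Y$. Restricting $X$ to the nonzero space $\ker Y$ and using that $k$ is algebraically closed, $X|_{\ker Y}$ has an eigenvector $v$; this $v$ satisfies $Xv = \lambda v$ and $Yv = 0$, so it is the desired common eigenvector. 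Set $V_1 = kv$; this is invariant under both $X$ and $Y$. Passing to the quotient $k^n/V_1$, the induced operators $\bar X, \bar Y$ still satisfy $\bar X\bar Y - \bar Y\bar X = \bar Y^2$, so by induction on $n$ there is a full flag in the quotient, which pulls back to a full flag in $k^n$ refining $V_1$. Choosing a basis compatible with this flag triangularizes $X$ and $Y$ simultaneously. $\Box$

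I do not expect any serious obstacle here: the only nontrivial input is the nilpotency of $Y$, which is already available as Lemma~\ref{l2.1}/Lemma~\ref{nilnil}, and everything else is the routine common-eigenvector-plus-induction argument (a noncommutative analogue of the simultaneous triangularization of commuting matrices). The one point worth stating carefully is that $\ker Y$ — rather than a generalized eigenspace — is the right $X$-invariant subspace to work inside, since it is precisely the eigenvalue-$0$ eigenspace of the nilpotent $Y$, giving a genuine common eigenvector with $Y$-eigenvalue $0$.
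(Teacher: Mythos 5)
Your proof is correct and follows essentially the same route as the paper's: $Y$ is nilpotent, the defining relation shows $\ker Y$ is $X$-invariant, which yields a common eigenvector, and then one passes to the quotient and inducts. The only difference is that you spell out the identification of "eigenspace of $Y$" with $\ker Y$ and the restriction-of-$X$ step more explicitly, which is a welcome clarification but not a different argument.
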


\proof
Using the defining relation and the fact that $Y$ is nilpotent we
can see that any eigenspace of $Y$ is invariant under $X$. Hence $X$
and $Y$ has joint eigenvector $v$. Then we consider quotient
representation on the space $V/\{v\}$ which has the same property.
Continuation of this process supplies us with the basis where both $X$
and $Y$ are triangular.  $\Box$

\begin{lemma}\label{tGerst}
Let $\rho_n:R\to M_n(k)$ be an indecomposable  $n$-dimensional
representation of $R$ and $A_n=\rho_n(R)$ be the image algebra. Then
the dimension of $A_n$ does not exceed $\frac{n(n+2)}4$ for even $n$
and $\frac{(n+1)^2}4$ for odd $n$. \rm
\end{lemma}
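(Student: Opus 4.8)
The plan is to reduce to strictly‑triangular matrices and then bound $\dim A_n$ one superdiagonal at a time, playing the grading of $R$ off against the sizes of the superdiagonals of $M_n(k)$. First I would carry out the reductions: since $\rho_n$ is indecomposable, Lemma~\ref{lam} gives that $X=\rho_n(x)$ has a single eigenvalue $\lambda$; replacing $\rho_n$ by its composition with the automorphism $x\mapsto x+\lambda,\ y\mapsto y$ (Theorem~\ref{taut}) changes neither $\dim A_n$ nor the isomorphism class of the module, and makes $X$ nilpotent. By Lemma~\ref{l2.1} the matrix $Y=\rho_n(y)$ is nilpotent too, and Lemma~\ref{ltr} supplies a basis in which both $X$ and $Y$ are strictly upper triangular; so $A:=A_n\subseteq kI\oplus\mathfrak n$, with $\mathfrak n$ the strictly upper triangular $n\times n$ matrices.

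Now the bookkeeping. By Lemma~\ref{l4.1}, $R=\bigoplus_d R_d$ is graded with $R_d=\mathrm{span}\{y^kx^{d-k}:0\le k\le d\}$, and since $xy-yx-y^2$ is homogeneous of degree $2$ we have $\dim R_d=d+1$. Write $F^d$ for the span of the matrix units $E_{ij}$ with $j-i\ge d$; each $\rho_n(y^kx^{d-k})$ is a product of $d$ strictly upper triangular matrices, hence lies in $F^d$, so $\rho_n(R_d)\subseteq F^d$. Set $A^{(d)}:=\sum_{e\ge d}\rho_n(R_e)$, a descending chain with $A^{(0)}=A$ and $A^{(n)}=0$ (because $F^n=0$), so that
$$\dim A=\sum_{d=0}^{n-1}\dim\bigl(A^{(d)}/A^{(d+1)}\bigr).$$
Each layer $A^{(d)}/A^{(d+1)}$ is a quotient of $\rho_n(R_d)$, hence of $R_d$, giving $\dim(A^{(d)}/A^{(d+1)})\le d+1$. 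On the other hand $A^{(d)}\subseteq F^d$, and the projection onto the $d$-th superdiagonal $F^d\to F^d/F^{d+1}$ (an $(n-d)$-dimensional space) annihilates $A^{(d+1)}\subseteq F^{d+1}$, so it induces a map $A^{(d)}/A^{(d+1)}\to F^d/F^{d+1}$ with kernel $(A^{(d)}\cap F^{d+1})/A^{(d+1)}$; whenever $A^{(d)}\cap F^{d+1}=A^{(d+1)}$ this yields $\dim(A^{(d)}/A^{(d+1)})\le n-d$. Granting the two bounds,
$$\dim A\le\sum_{d=0}^{n-1}\min(d+1,\,n-d),$$
and this sum equals $m(m+1)=n(n+2)/4$ when $n=2m$ and $(m+1)^2=(n+1)^2/4$ when $n=2m+1$; comparison with Lemma~\ref{tGfull-bl} shows the estimate is sharp, attained for the full block $Y$.

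The hard part will be the filtration compatibility $A^{(d)}\cap F^{d+1}=A^{(d+1)}$, which is where the relation $xy-yx=y^2$ genuinely has to be used, and which is only needed for the (few) $d$ with $d+1>n-d$, the first bound covering the rest. For an arbitrary subalgebra of $\mathfrak n$ it can fail, so one must exploit the explicit shape of $\rho_n(y^kx^l)$: in the full‑block case this matrix lies on exactly the single superdiagonal $k+l$ (Lemma~\ref{lImyx}), and for a general Jordan type of $Y$ the matrix $X$ is $\mathcal J$-Toeplitz (Theorem~\ref{tdm}), so the analysis localizes to the Jordan blocks; the multiplication formulas of Section~\ref{mult} (Lemmata~\ref{l3.1.1}, \ref{l3.1.2}) then let one check that the $d$-th superdiagonal entries of the $\rho_n(y^kx^{d-k})$, $0\le k\le d$, already pin down the class in $A^{(d)}/A^{(d+1)}$.

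A possible alternative, avoiding this delicate point, is to observe that the function $\rho\mapsto\dim\rho(R)$ attains its maximum on a dense open subset of every irreducible component of ${\rm mod}(R,n)$, and on the dense stratum of the full‑block component it equals the value of Lemma~\ref{tGfull-bl}; but making this cover an arbitrary indecomposable still amounts to bounding $\dim\rho(R)$ on a stratum with $Y$ in a fixed Jordan form, which is essentially the same computation. Either way, the estimate then feeds into the proof of Theorem~\ref{tG} once the non‑indecomposable case is handled by superadditivity of $n\mapsto n(n+2)/4,(n+1)^2/4$ under splitting into blocks.
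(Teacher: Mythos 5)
Your reductions (nilpotent $X$, simultaneous strict upper triangularization, the graded decomposition $R=\bigoplus R_d$ with $\dim R_d=d+1$) match the paper's, and the final count $\sum_d\min(d+1,n-d)$ is the right number. However, the proof as written has a genuine gap: the per-layer bound $\dim(A^{(d)}/A^{(d+1)})\le n-d$ requires exactly the compatibility $A^{(d)}\cap F^{d+1}=A^{(d+1)}$ that you flag as ``the hard part'' and never establish. For those $d$ with $n-d<d+1$ (the range where you actually invoke that bound), there is no a priori reason why an element of $\rho_n(R_d)$ landing accidentally in $F^{d+1}$ should already lie in $\sum_{e>d}\rho_n(R_e)$; the relation $xy-yx=y^2$ would have to be brought to bear, and you do not do so. Your suggested alternative via semicontinuity on strata is likewise only a sketch.

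What makes this gap avoidable is that the per-layer refinement is unnecessary: you only need the coarse two-block split, which is precisely what the paper does. Fix the cutoff $l$ (with $l=(n+1)/2$ for odd $n$, $l=n/2+1$ for even $n$), split $UT_n=L_1\oplus L_2$ according to superdiagonals $<l$ versus $\ge l$, and let $P_1,P_2$ be the corresponding projections. Then trivially $\dim A_n\le\dim P_1(A_n)+\dim P_2(A_n)$: no filtration compatibility enters. Now $P_1(Y^kX^m)=0$ as soon as $k+m\ge l$ (because $Y^kX^m\in F^{k+m}\subseteq L_2$), so $P_1(A_n)$ is spanned by the finitely many images of monomials with $k+m<l$, giving $\dim P_1(A_n)\le 1+\cdots+(l-1)$; and $\dim P_2(A_n)\le\dim L_2=1+\cdots+(n-l+1)$ is a pure dimension count. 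Adding these gives $n(n+2)/4$ or $(n+1)^2/4$. In your language, the fix is to replace the telescoping sum of layer bounds by the single estimate $\dim A\le\dim(A/A^{(l)})+\dim A^{(l)}\le\bigl(\sum_{d<l}\dim R_d\bigr)+\dim F^l$, which sidesteps the kernel $(A^{(d)}\cap F^{d+1})/A^{(d+1)}$ entirely.
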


\proof The algebra $A_n=\{\sum \alpha_{k,m}Y^kX^m\}$ consists now of
triangular matrices. Let us present the linear space $UT_n$ of upper
triangular $n\times n$ matrices as the direct sum of two subspaces
$UT_n=L_1\oplus L_2$, where $L_1$ consists of matrices with zeros on
upper diagonals with numbers $l,\dots,n$ and $L_2$ consists of
matrices with zeros on upper diagonals with numbers $1,\dots,l-1$,
where $l=(n+1)/2$ for odd $n$ and $l=n/2+1$ for even $n$. Let $P_j$,
$j=1,2$ be the linear projection in $UT_n$ onto $L_j$ along
$L_{3-j}$. Since $A_n$ is a linear subspace of $UT_n$, we have that
$A_n\subset M_1+M_2$, where $M_j=P_j(A_n)$. Therefore $\text{dim}\,
A_n\leq \text{dim}\,M_1+\text{dim}\,M_2$. The dimension of $M_1$
clearly does not exceed the dimension of the linear span of those
matrices $Y^kX^m$, which do not belong to $L_2$. Thus,
$$
\text{dim}\, M_1\leq \text{dim}\, \langle Y^kX^m| k+m<l-1\rangle _k.
$$

Here we suppose that $X$ (as well as $Y$) is nilpotent. We can do
this because the module is indecomposable. Indeed, lemma
\ref{lam} says that for an indecomposable module $X$ has a unique
eigenvalue. This implies that any indecomposable representation is
autoequivalent to one with nilpotent $X$ and $Y$ due to the
automorphism of $R$ defined by $\phi_{\lambda}(x)=x-\lambda$,
$\phi_{\lambda}(y)=y$. Since autoequivalent representations have the
same image algebras we can suppose that $X$ is nilpotent.

Thus the dimension of $ M_1$ does not exceed the number of the pairs
$(k,m)$ of non-negative integers such that $k+m<l-1$, which is equal
to $1+\dots+(l-1)$. On the other hand $\text{dim}\,M_2\leq
\text{dim}\,L_2$ and the dimension of $L_2$ does not exceed the
total number of entries in the non-zero diagonals.
$$
{\rm dim}\,M_2\leq 1+\dots + (n-l+1).
$$
Taking into account that $\text{dim}\, A_n\leq
\text{dim}\,M_1+\text{dim}\,M_2$, we have
$$
{\rm dim}\, A_n\leq 1+\dots+(l-1)+1+\dots+(n-l+1).
$$
The latter sum equals $\frac{n(n+2)}4$ for even $n$ and
$\frac{(n+1)^2}4$ for odd $n$.
$\Box$

After we have proved the estimation for the indecomposable modules,
it is easy to see that the same estimate holds for arbitrary module,
since the function $n^2$ is convex.

On the other hand as it was shown in the Lemma \ref{tGfull-bl} that this
estimate is attained on the algebra $A_n=\epsilon(R)$ in the case of
the full block $Y$.
This completes the proof of  Theorem \ref{tG}.

\section{Stratification of the Jordan variety}\label{param}

Here we suppose that $k=\C$.
 Let us consider the variety of
$R$-module structures on $k^n$ and denote it by $mod(R,n)$. Such
structures are in 1-1 correspondence to  $k$-algebra homomorphisms
$R\to M_n(k)$ ($n$-dimensional representations), or equivalently to
a pairs of matrices $(X,Y)$, $X,Y\in M_n^{(2)}(k)$, satisfying the
relation $XY-YX=Y^2$. The group $GL_n(k)$ acts on $mod(R,n)$ by
simultaneous conjugation and orbits of this action are exactly the
isomorphism classes of $n$-dimensional $R$-modules. Denote this
orbit of a module $M$ or of a pair of matrices $(X,Y)$ as ${\cal
O}(M)$ or ${\cal O}(X,Y)$ respectively.

Consider the following
{\it stratification} on $mod(R,n)$. Let ${\cal U_P}$ be the set of all pairs $(X,Y)$
satisfying the relation, where $Y$ has a fixed Jordan form. Here
$\cal P$ stands for the partition of $n$, which defines the Jordan
form of $Y$. Clearly ${\cal U_P}$ is a union of all orbits where $Y$
has a Jordan form defined by the partition $\cal P$.
We
will write ${\cal U}_{(n)}$ for the stratum corresponding to the trivial partition
${\cal P}=(n)$ or to $Y$
with the full Jordan block.

In this stratum ${\cal U_P}$ there is a natural choice of the {\it vector} bundle structure with the base consisting of
$Y$s with the given Jordan form defined by $\cal P$. The fiber then will be an affine space of solutions of the equation $XY-YX=YY$ with respect to $X$, when $Y$ is fixed. The dimension does not depend on the $Y$, since $Y$s are all conjugate.  It is equal to the dimension of the space of ${\cal B}-Toeplitz$ matrices defined by the partition $\cal P$. If ${\cal P}=(n_1 \leq n_2... \leq n_r)$, then the dimension of this fiber is $m=(2r-1)n_1+(2r-3)n_2+...+3n_{r-1}+n_r$. The dimension of the base is therefore $n^2-m$.
For example, for the strata ${\cal U}_{(n)}$ defined by the full block $Y$, a generic one, the dimensions of the base and the fiber in this vector bundle distributed as $n^2-n$ and $n$ respectively.

{Remark}. If one consider the whole space of pairs of matrices ${\cal M}_n$, in general position there exists an {\it algebraic} bundle, described recently in the Kontsevich Arbeitstagung talk \cite{Maxim}. There dimensions of the fiber and the base distributed as $n^2-2n-1$ and $(n+1)^2$ respectively, fibers are not a vector spaces, but still quite nice -- they are 'algebraic'.  If we try to construct the bundle in our subvariety,  in generic situation,  on the same principle, where the base consists of polynomials ${\rm det} (1+sX+tY)$, and the fibers of pairs of matrices with this polynomial, we get a bundle with one dimensional base. Indeed, the  pairs of matrices $(X,Y)$ with the full block Jordan form of $Y$ are in general position in our variety $mod(R,n)$. Hence in this generic strata $X$  has one eigenvalue and the polynomial, defining the point of the base is ${\rm det}(I-aIs+0It)=(1+as)^n$, so the base is a one parameter family.
The fiber however, is quite complicated: $F_a=\{(X,Y)| X $ with one eigenvalue, $Y$ satisfying $XY-YX=YY \}$. Note that fibers are shifts of each other: $F_b=(X+(b-a)I, Y)=F_a+((b-a)I,0)$.

\subsection{Examples of parametrizable strata}

In this section we will give a parametrization (by two parameters)
of the stratum  ${\cal U}_{(n)}$.

Another action involved here is an action of the subgroup of $GL_n$
on those pairs $(X,Y)$, where $Y=J_{\cal P}$ is in fixed Jordan
form. Denote this space by $W_{\cal P}$. The subgroup which acts
there is clearly the centralizer of the given Jordan matrix:
$C(J_{\cal P})$. Orbits of the action of $C(J_{\cal P})$ on the
space $W_{\cal P}$ are just restrictions of orbits above: ${\cal
O_P}(X)={\cal O}(X,Y) \cap W_{\cal P}$.

We sometimes  consider instead of action of $GL_n$ on the whole
space an action of the centralizer $C(J_{\cal P})$ on the smaller space
$W_{\cal P}$. While the group is not reductive any more
and has a big unipotent part, we act just on the space of matrices
and some information easier to get in this setting. It then could be
(partially) lifted because  of 1-1 correspondence of orbits. More
precisely, it could be lifted if we are interested  in parametrization, but if we
consider, for example,  degeneration of orbits the situation may certainly change
after their  restriction.

What we actually do here is obtaining the parametrization for
$W_{(n)}$. Due to 1-1 correspondence between the orbits we then have
a parametrization of  ${\cal U}_{(n)}$ .

Let us restrict the orbits even a little further, considering the
action of the group $G = C(J_{\cal P}) \cap SL_n$, where the 1-1
correspondence with the initial orbits will be clearly preserved. In
the case ${\cal P}= (n)$ the group $G$ can be presented as follows:
$$G=\{I+\a_1 Y + \a_2 Y^2 + ... + \a_{n-1} Y^{n-1}\},$$
\noindent due to our description of the centralizer of $Y$ in
Proposition \ref{cstr}. This group acts on the affine space of the
dimension $n$:
$$W_{(n)}=\{\l I + X^0+c_1 Y + c_2 Y^2 + ... + c_{n-1} Y^{n-1}\}$$
\noindent here $\l$ is the eigenvalue of $X$ and $X^0$ is the matrix
 with the second diagonal $[0,1,
\dots,(n-1)]$ and zeros elsewhere (defined in section \ref{descript}).

%$$
%\tiny
%X^0=\left(\begin{array}{cccccc}
%0&1&&&&\\
%&0&2&&\smash{\hbox{\normalsize0}}&\\
%&&0&3&&\\
%&&&\ddots&\ddots&\\
%&\smash{\hbox{\normalsize0}}&&&0&n-1\\
%&&&&&0\end{array}\right)
%$$

Let us fix first the eigenvalue: $\, \l=0$, we get then the space of
dimension $n-1$:
$$W_{(n)}'=\{ X^0+c_1 Y + c_2 Y^2 + ... + c_{n-1}
Y^{n-1}\}.$$

We intend to calculate now the dimension of the orbit ${\cal
O}_{(n)}(X,G)$ of $X$ with fixed eigenvalue $\l=0$ under $G$ --
action.

Consider the map $\phi: G \longrightarrow W_{Y}'$ defined by this
action: $\phi (C) = CXC^{-1}$, then ${\rm Im}\, \phi ={ \cal
O}_{(n)}(X,G)$. We are going to  calculate the rank of Jacobian of
this map. We will see that it is constant on $G$ and equals to
$n-2$. This tells us that each orbit ${ \cal O}_{(n)}(X,G)$ is an
$n-2$ dimensional manifold and hence there couldn't be more then 2
parameters involved in parametrization of orbits.

\subsection{Calculation of the rank of Jacobian}

\begin{theorem}\label{t7.1} Let $G$ be an intersection of $SL_n$
with the centralizer of $Y$. Consider the action of this group on
the affine space $W_{Y}'=\{ X^0+c_1 Y + c_2 Y^2 + ... + c_{n-1}
Y^{n-1}\}$ by conjugation. Then the rank of the Jacobian of the map
$\phi: G \longrightarrow W_{Y}'$ is equal to $n-2$ at any point $C
\in G$.
\end{theorem}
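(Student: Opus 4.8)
The plan is to compute the differential of $\phi$ at an arbitrary point $C\in G$ and determine its rank by a direct linear-algebra analysis. First I would parametrize $G$ near $C$: since $G=\{I+\alpha_1 Y+\dots+\alpha_{n-1}Y^{n-1}\}$ is an $(n-1)$-dimensional affine group (one less than the $n$-dimensional centralizer because of the $SL_n$ condition, which cuts out one coordinate), its Lie algebra $\mathfrak{g}$ is spanned by $Y,Y^2,\dots,Y^{n-1}$ subject to the single trace-type constraint coming from $\det=1$; in fact $\mathrm{tr}\,Y^k=0$ for all $k\geq 1$ since $Y=J_n$ is nilpotent, so $\mathfrak{g}$ is exactly $\mathrm{Span}_k\{Y,Y^2,\dots,Y^{n-1}\}$, of dimension $n-1$. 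The differential of $\phi$ at $C$, transported to $C=I$ by left translation (legitimate because $\phi(CD)$ and $\phi(C)$ differ by the conjugation action, which is a diffeomorphism of $W_Y'$ onto an affine subspace), sends $Z\in\mathfrak{g}$ to the commutator $[Z,X]=ZX-XZ$, where $X=X^0+c_1Y+\dots+c_{n-1}Y^{n-1}$ is the point whose orbit we study. So the rank of the Jacobian at any $C$ equals $\dim\{[Z,X]: Z\in\mathrm{Span}_k(Y,\dots,Y^{n-1})\}$, which is independent of $C$; this already gives the claimed constancy.

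Next I would compute this dimension, i.e. the rank of the linear map $\mathrm{ad}_X:\mathfrak{g}\to M_n(k)$. The kernel consists of those $Z=\sum_{j=1}^{n-1}\alpha_j Y^j$ with $[Z,X]=0$. Since $[Y^j,Y^i]=0$ for all $i,j$, and $X=X^0+(\text{polynomial in }Y)$, we get $[Z,X]=[Z,X^0]=\sum_j\alpha_j[Y^j,X^0]$. Using Lemma \ref{l3.1.1} (or a direct computation), $[Y^j,X^0]$ is a nonzero matrix supported on a single diagonal for each $j=1,\dots,n-1$, and a short computation shows these commutators are linearly independent except for one relation, or that the kernel of $\mathrm{ad}_X$ restricted to $\mathfrak g$ is exactly one-dimensional — spanned by the unique (up to scalar) combination of powers of $Y$ that commutes with $X^0$. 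Concretely, $[Y,X^0]$ should turn out to be a nonzero multiple of $Y$ by the relation $XY-YX=Y^2$ specialized to the representation $\ee_n$, and iterating, $[Y^j,X^0]$ is a nonzero multiple of $Y^j$ plus lower-order-diagonal terms; the upshot is that $\mathrm{ad}_X$ has a one-dimensional kernel on $\mathfrak g$, so the rank is $(n-1)-1=n-2$.

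The main obstacle I anticipate is the explicit commutator bookkeeping: pinning down exactly which linear combination of $Y,Y^2,\dots,Y^{n-1}$ lies in $\ker(\mathrm{ad}_X)$ and verifying it is precisely one-dimensional (not zero-dimensional, which would give rank $n-1$, and not higher). The cleanest route is probably to observe that $Z\in\mathfrak g$ commutes with $X$ if and only if $\ee_n(p(y))$ commutes with $\ee_n(x)=X^0$ for the polynomial $p$ with $Z=p(Y)$, i.e. $p(y)$ is central-modulo-kernel in the relevant sense; but in $R$ itself, $[x,y^j]=jy^{j+1}$ by Lemma \ref{l3.1.1}, so $[x,p(y)]=yp'(y)$, which vanishes (as an element acting on $k^n$ via $\ee_n$) iff $yp'(y)$ lies in $\ker\ee_n$, i.e. iff $yp'(y)$ has $y$-degree $\geq n$. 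Among $p$ of degree $\leq n-1$ with $p(0)=0$ this forces $p$ constant, contradiction — unless we also remember $p$ is only determined mod the ideal and $\deg(yp'(y))=\deg p + 1$, so $yp'(y)$ vanishes under $\ee_n$ iff $\deg p \geq n$; hence the only $p$ of degree between $1$ and $n-1$ with $[x,p(y)]\equiv 0$ is... none, which would wrongly give kernel zero. The correct accounting must instead note that $X=X^0+c_1Y+\cdots$ and the centralizer computation is of $\mathrm{ad}_X$ as a map \emph{into $W_Y'$}, i.e. after projecting out the part of $[Z,X]$ orthogonal to the tangent space of $W_Y'$; since $W_Y'$ is $(n-1)$-dimensional and $[Z,X]$ automatically lies in its tangent directions (being a sum of diagonals of index $\geq 2$... actually index $\geq$ the structure forces it), the surjectivity defect is exactly one, coming from the image missing the $Y^{n-1}$-direction or from the $SL$-normalization. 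I would resolve this by direct matrix computation of the $(n-1)\times(n-1)$ Jacobian matrix in the basis $\{Y^j\}$ and exhibiting its rank as $n-2$, which is a routine but necessary calculation.
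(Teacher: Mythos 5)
Your approach is essentially the same as the paper's: reduce the rank computation of $d\phi(C)$ to that of a commutator map on $\mathrm{Span}_k\{Y,\dots,Y^{n-1}\}$, exploiting that the group $G$ is abelian (all elements are polynomials in $Y$) so the rank is $C$-independent, then evaluate the commutator via the defining relation. The paper carries this out by writing $d\phi(C)(\Delta)$ as $[\tilde X,\tilde\Delta]\,C$ with $\tilde X=C^{-1}X$, $\tilde\Delta=\Delta C^{-1}$, and noting that $C^{-1}$ commutes with powers of $Y$; this is a slightly more careful version of your ``transport to the identity,'' but the content is identical.

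However, there is a genuine computational error in your proposal that derails the final step, and you do not recover from it. From $[x,y^j]=jy^{j+1}$ (Lemma \ref{l3.1.1}) you get $[x,p(y)]=\sum_j p_j\,j\,y^{j+1}=y^{2}p'(y)$, \emph{not} $yp'(y)$. The extra factor of $y$ raises the degree by one and is exactly what makes the argument close. With the correct identity, for $Z=\sum_{k=1}^{n-1}\gamma_k Y^k$ the image is
\[
[X,Z]=Y^2\Bigl(\sum_{k=1}^{n-1}k\gamma_k Y^{k-1}\Bigr)=\sum_{k=1}^{n-1}k\gamma_k Y^{k+1},
\]
and the single term with $k=n-1$ is $(n-1)\gamma_{n-1}Y^{n}=0$ since $Y^n=0$. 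Thus the image is spanned by the linearly independent matrices $Y^2,\dots,Y^{n-1}$, of dimension exactly $n-2$, and the kernel of $\mathrm{ad}_X$ on $\mathfrak g$ is one-dimensional, spanned by $Y^{n-1}$. Your wrong formula $yp'(y)$ gives degree $\deg p$ rather than $\deg p+1$, which is why you found a zero kernel and rank $n-1$, and the subsequent discussion of projecting to the tangent space of $W_Y'$ and the ``$SL$-normalization'' is an attempt to fix the wrong problem; neither of those is the source of the defect. Once the factor $y^2$ is restored, the computation is complete and no further projection or direct matrix bookkeeping is needed.
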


\proof
Clearly $d \phi(C)(\D)$ is the linear part of the map $(C+\D)^{-1}X(C+\D)-C^{-1}XC,$ where

$$C=I+\a_1 Y + \a_2 Y^2 + ... + \a_{n-1}
Y^{n-1},$$

$$X=X^0+c_1 Y + c_2 Y^2 + ... + c_{n-1} Y^{n-1},$$

$$\D=\b_1 Y + \b_2 Y^2 + ... + \b_{n-1} Y^{n-1}.$$

Let us present $(C+\D)^{-1}$ in the following way:

$$(C+\D)^{-1}=(I+\D C^{-1})^{-1} C^{-1}=$$

$$(I-\D C^{-1} +  {\rm lower \, order \, terms \, on \,}
\D) C^{-1}.$$

Then

$$(C+\D)^{-1}X(C+\D)-C^{-1} X C =  $$

$$(I-\D C^{-1} +
{\rm lower \, order \, terms \, on \,}  \D) C^{-1} X (C+\D) - C^{-1} X
C=$$

$$ - \D C^{-2} X C + C^{-1} X \D + {\rm lower \, order \, terms \, on \,}\D =$$

$$(-\D C^{-1} \cdot C^{-1} X + C^{-1} X \cdot \D C^{-1}) C
+ {\rm lower \, order \,terms \, on \,}\D .$$

Denote  $\tilde \D := \D C^{-1}$ and $\tilde X :=  C^{-1} X$.
Obviously multiplication by $C$ preserves the rank and rank of
linear map $d \phi(C)(\D)$ is equal to the rank of the map $T(\tilde
\D) = [\tilde X, \tilde \D]$.

Here again
$\tilde \D$ has
a form
$$\tilde \D=
\g_1 Y + \g_2 Y^2 + ... + \g_{n-1} Y^{n_1}.$$

Let us compute commutator of $\tilde X$ with $Y^k$. Taking into
account that $C^{-1}$ is a polynomial on $Y$, hence commute with
$Y^k$ and also the relation in  algebra $R$: $ XY^k-Y^kX=k Y^{k+1}.$
We get $ \tilde X Y - Y \tilde X=
C^{-1}XY^k-Y^kC^{-1}X=C^{-1}(XY^k-Y^kX)= C^{-1} k Y^{k+1} $. Hence
$$ \tilde X p(Y) - p(Y) \tilde X = C^{-1} Y^2 p'(Y)$$
\noindent for arbitrary polynomial $p$. Applying this for the
polynomial $\tilde \D$ we get
$$ T(\tilde \D)= [\tilde X, \tilde \D] =
\sum_{k=1}^{n-2} \g_k k C^{-1} Y^{k+1},$$ \noindent hence this
linear map has rank $n-2$. $\Box$

From Theorem \ref{t7.1} we could deduce the statement concerning
parametrization of isoclasses of modules in the stratum ${\cal U}_{(n)}$.

We mean by {\it parametrization} (by $m$ parameters) the existence
of $m$ smooth algebraically independent functions which are constant
on the orbits and separate them.

\begin{corollary}\label{c7.2}
Let ${\cal U}_{(n)}$ be the stratum as above. Then the set of
isomorphism classes of indecomposable modules from ${\cal U}_{(n)}$
could be parameterized by at most two parameters.
\end{corollary}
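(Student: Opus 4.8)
\textbf{Proof proposal for Corollary \ref{c7.2}.}

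The plan is to extract the parameter count directly from Theorem \ref{t7.1}. Recall that isomorphism classes of modules in ${\cal U}_{(n)}$ are in bijection with the orbits of the group $G = C(J_{(n)}) \cap SL_n$ acting by conjugation on the $(n-1)$-dimensional affine space $W_{(n)}' = \{X^0 + c_1 Y + \dots + c_{n-1} Y^{n-1}\}$ (after fixing the eigenvalue $\lambda = 0$, which we may do since the automorphism $\varphi_\lambda$ identifies the modules with different $\lambda$). So it suffices to show that this action has at most a two-parameter family of orbits, i.e.\ that the generic orbit has dimension at least $n-2$, which then leaves at most $\dim W_{(n)}' - (n-2) = 1$ free parameter here, plus one more for the eigenvalue $\lambda$ that was frozen.

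The key step is: by Theorem \ref{t7.1}, the rank of the Jacobian of $\phi : G \to W_{(n)}'$, $\phi(C) = C X C^{-1}$, equals $n-2$ at every point $C \in G$. By the constant-rank theorem (applied to the smooth map $\phi$ on the smooth manifold $G$), the image ${\rm Im}\,\phi = {\cal O}_{(n)}(X, G)$ is locally an embedded submanifold of dimension $n-2$; since this holds for every $X$ in the stratum, every orbit in $W_{(n)}'$ is an $(n-2)$-dimensional manifold. Consequently the orbit space $W_{(n)}' / G$ has ``dimension'' $(n-1) - (n-2) = 1$: one can cut $W_{(n)}'$ by a local transversal slice of dimension $1$ meeting each nearby orbit, so a single smooth function that is constant on orbits and injective on the slice parametrizes the orbits locally. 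Reinstating the eigenvalue $\lambda$ (which was set to $0$) contributes the second parameter, since ${\cal U}_{(n)}$ as a whole fibers over the $\lambda$-line with fibers the $\lambda = 0$ picture shifted by $\lambda I$. Hence at most two algebraically independent functions, constant on orbits and separating them, are needed, which is exactly the asserted parametrization by at most two parameters.

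The main obstacle I expect is the passage from ``every orbit has dimension $n-2$'' to ``the orbit space is genuinely $1$-dimensional and admits a global (or at least well-behaved) parametrization by smooth algebraically independent invariant functions.'' The group $G$ is unipotent and non-reductive, so there is no GIT quotient to appeal to; one must be careful that orbits may fail to be closed and that a global slice may not exist. The honest statement is the local one — a transversal slice through a generic point — together with the uniform orbit-dimension bound, which is all that is needed to justify ``at most two parameters.'' I would phrase the corollary's conclusion accordingly, invoking the definition of parametrization given just before the statement (existence of $m$ smooth algebraically independent functions constant on and separating the orbits) and noting that the count $m \le 2$ follows from $\dim W_{(n)} - \dim({\rm generic\ orbit}) = n - (n-2) = 2$, with Theorem \ref{t7.1} supplying the generic (in fact uniform) orbit dimension $n-2$.
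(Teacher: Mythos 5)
Your approach is essentially the paper's: apply Theorem \ref{t7.1} and the constant-rank (locally flat map) theorem to conclude each $G$-orbit is an $(n-2)$-dimensional manifold, then do the dimension count to get at most two parameters. Your bookkeeping is slightly different (you work in $W_{(n)}'$ with $\lambda$ frozen, get one parameter, and reinstate $\lambda$ for the second; the paper counts directly in $W_{(n)}$ of dimension $n$ with $n-(n-2)=2$), but the content is identical. One thing the paper does that you flag as a worry but do not resolve: to pass from the local constant-rank statement to ``the image ${\cal O}_{(n)}(X,G)$ is globally an $(n-2)$-dimensional embedded manifold'' one must rule out self-intersections of the image; the paper does this by observing that the preimage $\phi^{-1}(P)=\{C\in G:\ CX=PC\}$ of any point $P$ is connected (it is a coset of the stabilizer in the connected unipotent group $G$), so the map cannot fold the image onto itself. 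With that supplied, your argument is complete and correct.
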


\proof Directly from Theorem \ref{t7.1} applying the theorem on
locally flat map \cite{DNF} to $\phi: G \longrightarrow W_{(n)}'$ we
have that ${\rm Im} \phi ={\cal O}_{(n)}(X,G)$ is an $n-2$
dimensional manifold. We have to mention here that this is due to
the fact that the image has no self-intersections. This is the case
since the preimage of any point $P$ is connected (it is formed just
by the solutions of the equation $CX=PC$ for $C \in G$). Hence we
can parametrize these orbits lying in the space $W_{(n)}$ of
dimension $n$ by at most two parameters. Due to 1-1 correspondence
with the whole orbits ${\cal O}(X,Y)$ the latter have the same
property.$\Box$

\begin{proposition}\label{p7.1} Parameters $\mu$ and $\lambda$ are invariant under the
action of $G$ on the set of matrices $\displaystyle \tiny
\left\{\left(\begin{array}{cccccc}
\lambda&\mu+1&&&&\\
&\lambda&\mu+2&&\smash{\hbox{\normalsize*}}&\\
&&\lambda&\mu+3&&\\
&&&\ddots&\ddots&\\
&\smash{\hbox{\normalsize0}}&&&\lambda&\mu+n-1\\
&&&&&\lambda\end{array}\right)\right\}.$
\end{proposition}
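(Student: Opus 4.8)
The plan is to show directly that $\lambda$ and $\mu$ can be recovered from the conjugacy class of the matrix $X \in W_{(n)}$ under the action of $G = C(J_n)\cap SL_n$, so that they descend to well-defined functions on orbits; combined with the dimension count of Theorem~\ref{t7.1} and Corollary~\ref{c7.2}, this identifies them as \emph{the} two parameters. The eigenvalue $\lambda$ is obviously invariant: conjugation does not change the spectrum, and by the description in Theorem~\ref{tdm} every matrix in $W_{(n)}$ has $\lambda$ as its unique eigenvalue sitting on the main diagonal; equivalently $\lambda = \frac1n\,{\rm tr}\,X$. So the whole content is the invariance of $\mu$.

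For $\mu$, the approach I would take is to isolate it as a coefficient that is unaffected by the specific conjugating element $C = I+\a_1Y+\dots+\a_{n-1}Y^{n-1}\in G$. From the computation inside the proof of Theorem~\ref{t7.1}, conjugating $X$ by such a $C$ produces $C^{-1}XC = X + C^{-1}Y^2 p'(Y)$-type correction terms; more precisely, since $C$ is a polynomial in $Y$, the relation $\tilde X\,p(Y)-p(Y)\tilde X = C^{-1}Y^2 p'(Y)$ shows that $CXC^{-1}$ differs from $X$ only by a matrix supported on diagonals $2,3,\dots,n-1$ (all of which are strictly above the first upper diagonal). Hence the entries of $X$ on the first upper diagonal --- the sequence $\mu+1,\mu+2,\dots,\mu+n-1$ --- are literally unchanged by the $G$-action. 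Therefore $\mu$, read off as (first upper-diagonal entry in row $1$) $-\,1$, is a genuine $G$-invariant. A clean way to phrase this is: $\mu+1 = X_{12}$ when $X$ is in the standard form, and the standard form is preserved (up to adding higher diagonals) by $G$, so $X_{12}-1=\mu$ is invariant; even more robustly, the difference of consecutive first-upper-diagonal entries is always $1$, and the constant $\mu$ is pinned down by this normalization together with $\lambda$.

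The key steps, in order, are: (1) recall from Proposition~\ref{cstr} that $G$ consists of polynomials in $Y$ with determinant $1$; (2) use the commutator identity $\tilde X p(Y)-p(Y)\tilde X = C^{-1}Y^2 p'(Y)$ from the proof of Theorem~\ref{t7.1} to conclude that $CXC^{-1} - X$ is supported on diagonals numbered $\ge 2$; (3) deduce that the entries on diagonals $0$ and $1$ of $X$ are $G$-invariant, giving invariance of $\lambda$ (diagonal $0$) and of the arithmetic progression on diagonal $1$, hence of $\mu$; (4) invoke Corollary~\ref{c7.2} to note that at most two independent invariant functions exist, so $(\lambda,\mu)$ is a complete set and the two orbits are distinguished exactly by them. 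The main obstacle --- really the only subtle point --- is step~(2): one must be careful that conjugation by $C$ does not merely preserve the \emph{span} of the relevant matrices but fixes the actual diagonal-$0$ and diagonal-$1$ entries; this follows because $\D C^{-1}$ in the expansion is a polynomial in $Y$ with \emph{no} constant or linear... wait, more precisely because $Y^2 p'(Y)$ kills diagonals $0$ and $1$, so the correction term never touches them. Once that is spelled out, the invariance is immediate and the proposition follows.
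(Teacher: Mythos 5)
Your proof is correct and is essentially the same argument the paper has in mind: the paper disposes of the proposition in one line ("direct calculation of $ZMZ^{-1}$ shows that elements in the first two diagonals of $M$ are preserved"), and you supply that calculation cleanly via the identity $[X,p(Y)]=Y^2p'(Y)$ already established in the proof of Theorem~\ref{t7.1}, which shows the conjugation error $C^{-1}XC-X=C^{-1}Y^2p'(Y)$ lives on diagonals $\ge 2$. (Your step (4) invoking Corollary~\ref{c7.2} for completeness is not needed for this proposition, which asserts only invariance, not that $(\lambda,\mu)$ separate orbits.)
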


\proof Direct calculation of $ZMZ^{-1}$ for $Z\in G$ as described
above shows that elements in first two diagonals of $M$ will be
preserved. $\Box$

Hence from Corollary \ref{c7.2} and Proposition \ref{p7.1} we
have the following classification result for
%the family ${\cal M}_n$
representations with the full Jordan block $Y$.

\begin{theorem}\label{t7.2} Let $P_{\lambda, \mu}$ denotes the pair
$(X_{\lambda, \mu},Y)$, where
$$\tiny X_{\lambda,\mu}=\left(\begin{array}{cccccc}
\lambda&\mu+1&&&&\\
&\lambda&\mu+2&&\smash{\hbox{\normalsize0}}&\\
&&\lambda&\mu+3&&\\
&&&\ddots&\ddots&\\
&\smash{\hbox{\normalsize0}}&&&\lambda&\mu+n-1\\
&&&&&\lambda\end{array}\right), \ \ \
Y=\left(\begin{array}{cccccc}
0&1&&&&\\
&0&1&&\smash{\hbox{\normalsize0}}&\\
&&0&1&&\\
&&&\ddots&\ddots&\\
&\smash{\hbox{\normalsize0}}&&&0&1\\
&&&&&0\end{array}\right).
$$
Every pair $(X,Y) \in  {\cal U}_{(n)} $ is conjugate to $P_{\lambda, \mu}$
for some $\lambda, \mu$. No two pairs $P_{\lambda, \mu}$ with
different $(\lambda, \mu) $ are conjugate.

\end{theorem}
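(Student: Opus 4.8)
The plan is to normalise $Y$ first and then to exploit that the only conjugations preserving the Jordan form $Y=J_n$ are conjugations by polynomials in $J_n$, which act ``unitriangularly'' with respect to the filtration by powers of $J_n$ and fix its two lowest levels. If $(X,Y)\in\mathcal{U}_{(n)}$, then $Y$ is nilpotent of corank one, hence similar to the full Jordan block $J_n$; replacing $(X,Y)$ by a conjugate we may assume $Y=J_n$. By Theorem~\ref{tdm} (full block case) $X$ then lies in $W_{(n)}$, i.e.\ $X=\lambda I+X_n^0+T$ with $T=\sum_{j=1}^{n-1}c_jJ_n^{\,j}$ a strictly upper triangular Toeplitz matrix and $\lambda$ the unique eigenvalue of $X$. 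Any further conjugation keeping $Y=J_n$ is a conjugation by an element of the centraliser $C(J_n)$, which by Proposition~\ref{cstr} is the set of invertible polynomials in $J_n$; since scalars act trivially we may take the conjugator in $G=C(J_n)\cap SL_n$, of the form $C=I+\sum_{i=1}^{n-1}\alpha_iJ_n^{\,i}$.

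For the first assertion I would compute $CXC^{-1}$. Because $C$ and $C^{-1}$ are polynomials in $J_n$ they commute with $\lambda I$ and with $T$, so $CXC^{-1}=\lambda I+CX_n^0C^{-1}+T$ and only the $X_n^0$-part is affected. The relation $X_n^0J_n^{\,k}-J_n^{\,k}X_n^0=kJ_n^{\,k+1}$ (obtained as in Lemma~\ref{l3.1.1} from $[X_n^0,J_n]=[X,J_n]=J_n^2$) gives $[C,X_n^0]=-\sum_{i\ge1}i\,\alpha_iJ_n^{\,i+1}$, so $[C,X_n^0]C^{-1}$ is a polynomial in $J_n$ with vanishing constant and linear terms:
\[
CX_n^0C^{-1}=X_n^0+\sum_{j=2}^{n-1}d_j\,J_n^{\,j},\qquad d_j=-(j-1)\alpha_{j-1}+(\text{polynomial in }\alpha_1,\dots,\alpha_{j-2}).
\]
One then solves for $\alpha_1,\dots,\alpha_{n-2}$ recursively so that $d_j=-c_j$ for $j=2,\dots,n-1$; the solution exists and is unique because $k$ has characteristic zero, so each $j-1$ is invertible. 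The resulting $C$ yields $CXC^{-1}=\lambda I+X_n^0+c_1J_n=X_{\lambda,\mu}$ with $\mu=c_1$, so $(X,Y)$ is conjugate to $P_{\lambda,c_1}$.

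For the second assertion, suppose $gX_{\lambda,\mu}g^{-1}=X_{\lambda',\mu'}$ and $gJ_ng^{-1}=J_n$. The latter forces $g\in C(J_n)$, so $g$ is, up to a nonzero scalar, of the form $I+\sum\alpha_iJ_n^{\,i}$, and the computation above shows $gX_{\lambda,\mu}g^{-1}=\lambda I+X_n^0+\mu J_n+(\text{polynomial in }J_n\text{ with vanishing constant and linear terms})$. Comparing with $X_{\lambda',\mu'}=\lambda' I+X_n^0+\mu' J_n$ on the main diagonal and on the first superdiagonal forces $\lambda=\lambda'$ and $\mu=\mu'$; this is precisely the invariance recorded in Proposition~\ref{p7.1}. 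Hence distinct pairs $(\lambda,\mu)$ give non-conjugate $P_{\lambda,\mu}$.

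There is no real obstacle beyond bookkeeping: the one fact carrying the argument is that conjugation by a polynomial in $J_n$ is unitriangular for the filtration by powers of $J_n$ and fixes its two lowest levels --- equivalently the identity $\tilde X\,p(Y)-p(Y)\,\tilde X=C^{-1}Y^{2}p'(Y)$ already isolated in the proof of Theorem~\ref{t7.1}. The only point requiring attention is that the triangular recursion for the $\alpha_i$ is solvable, which is where characteristic zero is used.
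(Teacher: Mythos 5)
Your proof is correct, but it takes a genuinely different and more explicit route than the paper's. The paper derives Theorem~\ref{t7.2} as an immediate corollary of two preceding results: Corollary~\ref{c7.2} (a dimension/rank argument showing, via the constant rank $n-2$ of the Jacobian computed in Theorem~\ref{t7.1}, that orbits of $G$ in $W_{(n)}$ form at most a $2$-parameter family) and Proposition~\ref{p7.1} ($\lambda$ and $\mu$ are orbit invariants). The classification then follows from a counting argument: each level set of $(\lambda,\mu)$ is an $(n-2)$-dimensional connected affine slice, the orbit through $X_{\lambda,\mu}$ is an $(n-2)$-dimensional submanifold open in it, and a connected manifold cannot be partitioned into several nonempty open orbits, so the level set is a single orbit. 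You instead work directly: after reducing to $Y=J_n$ you set up the triangular recursion $d_j=-(j-1)\alpha_{j-1}+(\text{lower }\alpha\text{'s})$ coming from $[J_n^{\,k},X_n^0]=-kJ_n^{\,k+1}$, solve it (using characteristic zero so that $j-1$ is invertible), and thereby exhibit an explicit $C\in C(J_n)\cap SL_n$ conjugating $X$ to $X_{\lambda,\mu}$. Both arguments rest on the same algebraic identity $\tilde X\,p(Y)-p(Y)\,\tilde X=C^{-1}Y^2p'(Y)$ isolated in Theorem~\ref{t7.1}; the paper uses it to read off the rank of a linear map, you use it to drive the recursion. Your approach buys a constructive normal form (the conjugating matrix is produced, not merely shown to exist) at the cost of some bookkeeping; the paper's approach is shorter once Theorem~\ref{t7.1} is in hand and simultaneously yields the smooth manifold structure on orbits, but leaves the conjugating element implicit. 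Your handling of uniqueness is essentially Proposition~\ref{p7.1} re-derived in passing, which is fine.
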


Let us mention that number of parameters does not depends of $n$ in
this case.

\subsection{Examples of tame strata (up to auto-equivalence)}

We give here some examples of tame strata in the suggested
above stratification related to the Jordan normal form of $Y$. We
show, for example,
that the stratum ${\cal U}_{(n-1,1)}$ corresponding to the partition
${\cal P}=(n-1,1)$
 is tame (but not of finite type) with
respect to auto-equivalence relation on modules. The latter was defined in
section \ref{rfd} and its meaning is in gluing together orbits which could be
obtained one from another using automorphism of the initial algebra.
These examples are quite rare, most strata are wild, as the algebra itself.

\begin{theorem}\label{t7.4}
The stratum  ${\cal U}_{(n)}$
has a finite representation type with respect to
auto-equivalence relation on modules.

The stratum  ${\cal U}_{\cal P}$, for
${\cal P}=(n-1,1)$ is tame, that is
parametrizable by one parameter, with respect to auto-equivalence
relation. \end{theorem}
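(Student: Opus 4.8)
The plan is to treat the two strata separately, each time reducing the classification problem to one already understood. For the stratum ${\cal U}_{(n)}$ (full block $Y$), the finite-type statement is essentially a corollary of Theorem \ref{t7.2} combined with the automorphism description of Theorem \ref{taut}. First I would recall from Theorem \ref{t7.2} that every $(X,Y)\in{\cal U}_{(n)}$ is conjugate to exactly one $P_{\lambda,\mu}$, so on the nose there is a two-parameter family of isoclasses. Then I would use the automorphism $\phi:x\mapsto x+p(y),\ y\mapsto y$ (with $p$ a polynomial, $\deg p\leq n-1$ since $Y^n=0$) to absorb the diagonal entries: conjugating $\ee_n$ by $C\in G$ and then applying a suitable $\phi$ one can normalize both $\lambda$ and $\mu$ to fixed values (e.g. $\lambda=0$, $\mu=0$). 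Concretely, the automorphism $x\mapsto \alpha x+p(y)$, $y\mapsto\alpha y$ rescales $Y=J_n$ by $\alpha$ and, after reconjugating back into Jordan form, rescales the first superdiagonal of $X$, hence shifts $\mu$; the additive part $p(y)$ shifts the remaining Toeplitz coefficients and, together with the conjugation freedom already exploited in Theorem \ref{t7.1}, kills everything. So up to auto-equivalence there is a \emph{single} isoclass, giving finite (indeed, trivial) representation type.

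For the stratum ${\cal U}_{(n-1,1)}$ the argument is longer. The partition is ${\cal P}=(n-1,1)$, so by Theorem \ref{tdm} the matrix $X$ is a ${\cal J}$-Toeplitz matrix for this partition: it has an $(n-1)\times(n-1)$ block $A_{11}=X^0_{n-1}+T_{11}$ (with $T_{11}$ upper-triangular Toeplitz), a $1\times 1$ block $A_{22}=(\nu)$ which must equal the eigenvalue on the big block's diagonal up to the analysis of Theorem \ref{tnn}, and rectangular Toeplitz off-diagonal blocks $A_{12}$ (a column) and $A_{21}$ (a row) satisfying $A_{12}J_1-J_2A_{12}=0$, $A_{21}J_1-J_2 A_{21}=0$ — here $J_1=J_{n-1}$, $J_2=(0)$, so $A_{21}J_{n-1}=0$ forces $A_{21}=(c,0,\dots,0)$ and $J_{n-1}A_{12}=0$ forces $A_{12}$ to be supported on its last entry. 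I would then pass to $W_{\cal P}$ and act by $C(J_{\cal P})\cap SL_n$, which by Proposition \ref{cstr} consists of block matrices whose $(1,1)$ block is an ${\cal B}$-Toeplitz (i.e.\ polynomial in $J_{n-1}$), whose $(2,2)$ block is a scalar, and whose off-diagonal blocks are again rectangular Toeplitz of the constrained shape. Using this group together with the automorphism $x\mapsto x+p(y)$, I would normalize: the diagonal $\lambda$ of the big block and the scalar $\nu$ get identified (auto-equivalence), the $\mu$-parameter of the big block gets fixed exactly as in the full-block case, the Toeplitz tail $T_{11}$ gets absorbed by the $(1,1)$-component of $C$, and the single surviving entry of $A_{12}$ can be scaled to $0$ or $1$ by the action. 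What genuinely survives is the product of the two scalars $A_{12}$ and $A_{21}$ (or some combination of them) — one continuous modulus — which cannot be removed; this is the single parameter, and one checks by a direct commutator computation (as in Theorem \ref{t7.1}) that the orbit through a normalized representative has codimension one in $W_{\cal P}$.

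The main obstacle I expect is the bookkeeping in the second case: showing that after all the normalizations the \emph{only} remaining invariant is one scalar, and in particular that the two off-diagonal entries contribute a single modulus rather than two. The delicate point is that the centralizer $C(J_{\cal P})$ is non-reductive with a large unipotent radical, so one must track carefully how its unipotent part acts on the off-diagonal blocks — in particular whether it can move $A_{12}$ and $A_{21}$ independently or only their product. I would handle this by writing the conjugation action explicitly on the three relevant scalar coordinates $(c_{12},c_{21},\lambda-\nu)$, computing the corresponding infinitesimal action (a linear map whose image I would compute exactly as the commutator $T(\tilde\Delta)$ was computed in Theorem \ref{t7.1}), and reading off that its cokernel is one-dimensional; combined with the auto-equivalence identification $\lambda\sim\nu$ this yields the one-parameter family. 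The verification that this family is genuinely non-discrete (so the stratum is not of finite type) follows because distinct values of the surviving modulus give non-isomorphic, non-auto-equivalent pairs, which one sees from the fact that auto-equivalences act on this modulus only through scalings $\alpha\in k^*$ coming from Theorem \ref{taut}, hence fix $0$ and act transitively on $k^*$ — wait, that would give finite type; so in fact the surviving coordinate must be of a shape (e.g.\ a ratio or a value on which $k^*$ acts trivially) that is a genuine modulus, and pinning this down precisely is exactly the crux of the proof.
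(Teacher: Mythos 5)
Your overall strategy — reduce to orbits of $C(J_{\cal P})\cap SL_n$ on $W_{\cal P}$ as in Theorem~\ref{t7.1}, then factor by the automorphism action of Theorem~\ref{taut} — is the same one the paper uses (the paper's own proof is essentially a sketch: ``analogous to Theorem~\ref{t7.2}'' plus two diagrams showing where the parameters live). Your treatment of the full-block case is essentially right: Theorem~\ref{t7.2} gives the two-parameter normal form $P_{\lambda,\mu}$, the automorphism $x\mapsto\alpha x+p(y)$, $y\mapsto\alpha y$ (followed by a diagonal reconjugation restoring $Y=J_n$) moves $\lambda\mapsto\alpha\lambda+p(0)$ and $\mu\mapsto\mu+p'(0)/\alpha$, so both can be normalized to $0$ and the stratum collapses to a single auto-equivalence class.

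The second case has genuine problems. First, your identification of the surviving entries of the off-diagonal blocks is wrong, and in fact swapped. For $Y=J_{n-1}\oplus(0)$, the block equation for the $(n-1)\times 1$ column $A_{12}$ is $J_{n-1}A_{12}=0$, which kills all but the \emph{top} entry $v_1$; the equation for the $1\times(n-1)$ row $A_{21}$ is $A_{21}J_{n-1}=0$, which kills all but the \emph{last} entry. You state the reverse for both. (Your display $A_{12}J_1-J_2A_{12}=0$, $A_{21}J_1-J_2A_{21}=0$ with the same $J_1,J_2$ on each side in both equations is itself inconsistent as written.) The paper's picture for ${\cal P}=(n-1,1)$ — an arrow $e_{n-1}\to e_n$ and a return arrow $e_n\to e_1$ — confirms the surviving positions are $X_{n,n-1}$ and $X_{1,n}$.

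Second, and more importantly, the crux is left open. You write that ``what genuinely survives is the product of the two scalars\ldots (or some combination of them)\ldots this is the single parameter,'' but then in the final paragraph you observe that $k^*$ acts on a single scalar with only two orbits, say ``wait, that would give finite type,'' and conclude that the surviving coordinate ``must be'' of a shape that gives a genuine modulus, while admitting that ``pinning this down precisely is exactly the crux.'' That is circular: you are deducing the form of the invariant from the statement of the theorem, not proving it. The paper's diagram records the actual answer — after conjugation by $\mathrm{diag}(I_{n-1},c)\in C(J_{\cal P})$, the two entries become $\alpha$ and $\alpha^{-1}$, i.e.\ the conjugation-invariant is the product $X_{1,n}X_{n,n-1}$, and one must then check that the auto-equivalence action (via $y\mapsto\alpha y$ composed with the diagonal reconjugation) fixes that product, leaving a one-parameter family. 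You gesture at this but do not carry out the computation, so as submitted the proof of the $(n-1,1)$ statement is incomplete.
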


\proof The proof is analogous to the proof of Theorem \ref{t7.2}.
We present here pictures showing how $X$ and $Y$ act on the basis and
where parameters appear:
$$
\begin{array}{ll}
{\cal P}=(n)&\\
&\bull{e_1}\doubarr{y}{x}\bull{e_2}\doubarr{y}{(x,2)}\bull{e_3} \
\ {\dots} \  \ \bull{e_{n-1}}\doubarr{y}{(x,n-1)}\bull{e_n}\\
\phantom0&\\
\phantom0&\\
\phantom0&\\
{\cal P}=(n-1,1)&\\
&\bull{e_1}\doubarr{y}{x}\bull{e_2}\doubarr{y}{(x,2)}\bull{e_3} \ \
{\dots} \  \ \bull{e_{n-2}}\doubarr{y}{(x,n-2)}\bull{e_{n-1}}
\siglearr{(x,\alpha)}\bull{e_{n}}\\ &\uparrow\hskip7.25cm|\\
&\,\hskip.47mm\smash{\raise 9.5pt\hbox to 7.47cm{\hrulefill}}\\
&\hfill\smash{\raise 10pt\hbox{$\scriptstyle
(x,\alpha^{-1})$}}\hfill
\end{array}
$$$\Box$

\section{The full block stratum and Ringel's classification of
complete local algebras}\label{ring}

Above results suggest to study more carefully the stratum corresponding to the full block $Y$,
since due to Corollary \ref{indec0} they are building blocks in the analogue of the Krull-Remark-Schmidt decomposition theorem on the level of irreducible components.
Moreover results on the analogue of Gerstenhaber theorem confirm the exceptional role of this stratum,
as the dimension of an image algebra reach its maximum in this stratum.
Here we are going to find out when image algebras for this stratum are tame and when they are wild.

We shall show the place of image algebras for representations from $U_{(n)}, \, n\in {\mathbb Z}$ in Ringel's classification
of complete local algebras \cite{Ringloc} by calculating their
defining relations. So the main problem we solve here is the following. Given a finitely presented algebra and its finite dimensional  representation. We need to find all defining relations of the image algebra.

As a result we prove  that for representations of small dimensions, for $n \leq 4$,
all image algebras  are {\it tame} and for $n \geq 5$ they are {\it wild} (in the classical sense accepted in the representation theory of finite dimensional algebras).

Recall that a $k$-algebra $A$ is called {\it local} if $A=k\oplus
{\rm Jac}(A)$, where ${\rm Jac}(A)$ is the Jacobson radical of $A$.
One can also consider the {\it completion} of $A$:
$\overline{A}=\lim\limits_{\displaystyle\longleftarrow}A/({\rm
Jac}(A))^n$. An algebra $A$ is called complete if $A=\overline A$.

We have seen in
Corollary \ref{cindlocal}, that any indecomposable representation has a
local algebra as an image, in particular, representations with
full block $Y$ do.
Note also that all image algebras are complete,
since ${\rm Jac}(A)^N=0$ for $N$ large enough. Indeed, we can use
here the Corollary \ref{radft} which describe the radical, or
observe directly that since $A=k\oplus {\rm Jac}(A)$ and $A$
consists of polynomials on $X^0$ and $J_n$ (as an image of one of
representations $\epsilon_n: (x,y) \mapsto (X^0,J_n)$), then $Jac(A)$
consists of those polynomials which have no constant term. The
matrices $J_n$ and $X^0$ are nilpotent of degree $n$ and $n-1$
respectively, hence ${\rm Jac}(A)^{2n}=0$.

Remind also that due to Lemma \ref{unim} for any $n$ we have only one image algebra
for all representations from ${\cal U}_{(n)}$.

\begin{theorem}\label{an5}  The image algebra $A_n$ of a representation
$\rho_n \in {\cal U}_{(n)} $ is wild for any $n\geq 5$. It has a
quotient isomorphic to the wild algebra given by relations
$y^2,yx-xy,x^2y,x^3$ from the Ringel's list of minimal wild local
complete algebras. The image algebras $A_1,A_2$ and $A_3$ are tame.
\end{theorem}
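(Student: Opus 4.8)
\medskip

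The plan is to reduce both halves of the statement to a single concrete graded algebra, extract its graded Hilbert function from what is already proved, and then, in the range $n\geq 5$, realise the Ringel algebra as an explicit graded quotient of it. First I would invoke Lemma~\ref{unim}: all representations in ${\cal U}_{(n)}$ have the same image algebra $A_n=\epsilon_n(R)$, generated by $X=X_n^0$ and $Y=J_n$. Since the defining relation $xy-yx-y^2$ is homogeneous, $A_n$ inherits a grading by total degree, which by Lemma~\ref{lImyx} is the grading by diagonal number; and the count in the proof of Lemma~\ref{tGfull-bl} gives $\dim(A_n)_d=\min(d+1,\,n-d)$ for $0\leq d\leq n-1$ and $(A_n)_d=0$ otherwise. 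The only consequences I will use are that $\dim(A_n)_1=2$ for all $n\geq 2$, and that $\dim(A_n)_2=3$ \emph{precisely when} $n\geq 5$ (it is $2$ for $n=4$ and $1$ for $n=3$). By Corollary~\ref{cindlocal} (together with nilpotence of its radical) $A_n$ is a local complete algebra, and by Corollary~\ref{radft} its radical is $\bigoplus_{d\geq 1}(A_n)_d$.

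For wildness when $n\geq 5$, put $B=k\langle x,y\rangle/(y^2,\,yx-xy,\,x^2y,\,x^3)$, the minimal wild local complete algebra in Ringel's list \cite{Ringloc}; a short normal-form computation gives $\dim B=5$, with basis $1,x,y,x^2,xy$. I would let $I$ be the two-sided ideal of $A_n$ generated by the images of $y^2$, $x^2y$ and $x^3$. Modulo $y^2$ the Jordan relation degenerates into $xy-yx$, so $B$ is a homomorphic image of $R$, and therefore $A_n/I$ is a homomorphic image of $B$; it thus suffices to show $\dim(A_n/I)\geq 5$, which then forces $A_n/I\cong B$. This I would verify degree by degree, using that $I$ is generated in degrees $2$ and $3$: one has $(A_n/I)_0=k$ and $(A_n/I)_1=(A_n)_1$, of dimension $2$; the degree-$2$ component of $I$ is exactly $kY^2$, so $\dim(A_n/I)_2=\dim(A_n)_2-1=2$, and this is where the hypothesis $n\geq 5$ enters; and in each degree $\geq 3$ the ideal $I$ already exhausts $(A_n)_d$, which one reads off from $[X,Y]=Y^2$ and Lemma~\ref{l3.1.1} (in degree $3$, say, $Y^3,Y^2X,X^2Y,X^3\in I$ and the identity $YX^2=X^2Y-Y^2X-XY^2$ puts $YX^2\in I$, and higher degrees follow on multiplying these relations by $X$). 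Hence $\dim(A_n/I)=1+2+2=5$ and $B$ is a quotient of $A_n$. Since restriction of scalars along $A_n\twoheadrightarrow B$ is a full exact functor ${\rm mod}\,B\hookrightarrow{\rm mod}\,A_n$ that preserves indecomposability and endomorphism rings, the wildness of $B$ transfers to $A_n$, for every $n\geq 5$. (The drop $\dim(A_n)_2=2$ already at $n=4$ is exactly why this construction does not reach below $n=5$.)

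For the tame cases, $A_1=k$ and $A_2\cong k[y]/(y^2)$ are of finite representation type, hence tame. For $A_3$, which is $4$-dimensional with graded dimensions $1,2,1$, reading off its structure constants directly from $X_3^0$ and $J_3$ yields an explicit presentation; after the change of generators $a=y-x$, $b=x$ one obtains, for instance, $A_3\cong k\langle a,b\rangle/(a^2,\,b^2,\,ba)$, a $4$-dimensional local complete algebra lying in the tame part of Ringel's list \cite{Ringloc}.

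The one genuinely delicate point is the degree-wise control of the ideal $I$ inside $A_n$ — proving that the three chosen relations cut $A_n$ down to exactly $B$, and to nothing smaller — which rests entirely on the sharp equality $\dim(A_n)_2=3\iff n\geq 5$ coming from Lemma~\ref{tGfull-bl} and on the commutation identities for $X$ and $Y$; the small cases and the comparison of $A_3$ with Ringel's classification are then routine.
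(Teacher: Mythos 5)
Your proof is correct and follows essentially the same route as the paper: both quotient $A_n$ by the two-sided ideal generated by the images of $y^2,\,x^2y,\,x^3$ (the relation $xy-yx$ being automatic modulo $Y^2$), and both count the codimension diagonal-by-diagonal (equivalently, by the grading) to show it is exactly $5$ once $n\geq 5$, forcing the quotient to coincide with the $5$-dimensional Ringel algebra. The only small deviation is for $A_3$: the paper appeals to the dimension count ($\dim A_3=4$ is below every $2$-generated algebra on Ringel's minimal-wild list) together with Ringel's Theorem 1.4, while you instead exhibit the explicit presentation $A_3\cong k\langle a,b\rangle/(a^2,b^2,ba)$ and compare it to the tame side of the list -- both arguments are valid.
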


\proof We intend to show that for $n$ big enough, the algebra $A_n$
has a quotient isomorphic to the algebra $W=\langle
x,y|y^2=yx-xy=x^2y=x^3=0 \rangle$, which is number c) in the
Ringel's list of minimal wild local complete algebras
\cite{Ringloc}.

The algebra $W$ is 5-dimensional. Let us consider the ideal $J$ in
$A_n$ generated by the relations above on the image matrices $X$ and
$Y$. This ideal has obviously codimension not exceeding 5. We intend
to show that $J$ has codimension exactly 5 and therefore $W$ should
be isomorphic to $A/J$.

Let us look at the ideal $J$, which is generated by
$\{Y^2,X^2Y,X^3,XY-YX\}$. First, since $XY-YX=Y^2$, $J$ is generated
by $\{Y^2,X^2Y,X^3\}$. It is easy to see that $Y^2$ has zeros on
first two diagonals and the vector ${\bf {1}}=(1,\dots,1)$ on the
third one,  $X^2Y$, $X^3$ have zeros on the first three diagonals.
An arbitrary  element of $A_n$ has the constant vector $c{\bf 1}$
for some $c \in k$ on the main diagonal.
Hence we see that a general element of the ideal $J$ has zeros on
the first two diagonals and the constant sequence $c{\bf 1}$ on the
third one. Taking into account that $A_n$ comprises the upper
triangular matrices that have values of a polynomial of degree at
most $m$ on $m$-th diagonal (Lemma \ref{lImyx}), we see that the
main diagonal gives an impact of $1$ to the codimension of $J$, the
first diagonal gives an impact of $2$ to the codimension of $J$ if
the length of this diagonal is at least $2$ (that is $n\geq 3$) and
the second diagonal, ---  an impact of $2$, if the length of this
diagonal is at least $3$ (that is $n\geq 5$). Thus, dim$\,A/J\geq 5$
if $n\geq 5$. This completes the proof in the case $n\geq 5$.

Tameness of $A_1$ and $A_2$ is obvious. For $A_3$ the statement
follows from the dimension reason: dim$\,A_3=4$, it is less then the
dimensions of all 2-generated algebras from the Ringel's list of
minimal  wild algebras. Since his theorem (theorem 1.4 in
\cite{Ringloc}) states that any local complete algebra is either
tame of has a quotient from the list, $A_3$ can not be wild by
dimension reasons. Hence $A_3$ is tame. $\Box$

Let us consider now the case $n=4$.

\begin{theorem}\label{an4}  Let $\rho_4 \in {\cal U}_{(4)}$ be a four dimensional
representation of the algebra $R$. Then the image algebra
$A_4=\rho_4 (R)$ is given by the relations $k\langle x,y|x^2=-2xy,
xy=yx+y^2,x^3=0\rangle$ and is tame.
\end{theorem}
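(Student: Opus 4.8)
The plan is to follow the template of Theorem~\ref{an5}: by Lemma~\ref{unim} there is a single image algebra for the whole stratum ${\cal U}_{(4)}$, so it suffices to work with the concrete representation $\epsilon_4:(x,y)\mapsto(X^0,J_4)$, compute the algebra $A_4=\epsilon_4(R)$ explicitly as a space of $4\times4$ upper-triangular matrices, read off its defining relations, and then locate it in Ringel's list. First I would recall from Lemma~\ref{lImyx} (and its use in Lemma~\ref{tGfull-bl}) that $A_4$ is spanned by the images of $y^kx^m$, where $\epsilon_4(y^kx^m)$ lives on the $(k{+}m)$-th upper diagonal with a polynomial of degree exactly $m$ in the position index; since $J_4^4=0$ and $(X^0)^3=0$, only the diagonals $0,1,2,3$ occur and the degree on the $m$-th diagonal is at most $m$. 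Counting as in Lemma~\ref{tGfull-bl} with $n=4=2m$, $m=2$, gives $\dim A_4 = 1+2+2+\dots$; more precisely the main diagonal contributes $1$, the first upper diagonal (length $3$) contributes $2$, the second (length $2$) contributes $2$, and the third (length $1$) contributes $1$, so $\dim A_4 = 6$. Wait — one must be careful: the third diagonal has length $1$, which is less than the would-be polynomial degree $3$, so its contribution is just $1$; hence $\dim A_4 = 1+2+2+1 = 6$? Here I should instead trust the formula $\dim A_n = n(n+1)/4$ for even $n$ from Lemma~\ref{tGfull-bl}, which for $n=4$ would give $5$; reconciling this bookkeeping is exactly the kind of routine check to nail down, and the relations will pin down the correct value.

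Next I would verify the three asserted relations on the matrices $X=X^0$, $Y=J_4$. The relation $xy-yx=y^2$ holds by construction. For $x^2 = -2xy$: compute $X^2$, which sits on the second diagonal with entries given by the degree-$2$ polynomial $p(j)=(j+1)(j+2)$ for the relevant few indices, and compare with $-2XY$, which is $-2$ times the image of $yx$, sitting on the second diagonal; a short calculation of the two length-$2$ diagonals should show they agree (this uses crucially that $n=4$, so the second diagonal has only two entries and a linear identity among quadratics can hold there). For $x^3=0$: since $X^0$ is nilpotent of index $3$ when $n=4$, $(X^0)^3=0$ is immediate. Then I would argue these generate all relations: the quotient $\tilde A = k\langle x,y\mid x^2+2xy,\ xy-yx-y^2,\ x^3\rangle$ surjects onto $A_4$, and one computes a linear basis of $\tilde A$ — using the first relation to eliminate $x^2$ (hence any monomial with two adjacent $x$'s) and $xy-yx-y^2=y^2$... rather, using $xy=yx+y^2$ to push $y$'s left and then $x^2=-2xy=-2yx-2y^2$ — to get a spanning set like $\{1,x,y,yx,y^2\}$ (note $y^2\ne0$ here, unlike in the $n\ge5$ analysis) of size $5$, matching $\dim A_4$. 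This shows the surjection $\tilde A\twoheadrightarrow A_4$ is an isomorphism.

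Finally, for tameness I would check $\tilde A$ against Ringel's theorem (Theorem~1.4 of \cite{Ringloc}): either identify $\tilde A$ with a specific tame algebra on Ringel's list of tame local complete algebras, or argue by dimension that $\tilde A$, being $5$-dimensional, cannot have any quotient isomorphic to one of the minimal wild algebras of the list — checking that $W=\langle x,y\mid y^2,yx-xy,x^2y,x^3\rangle$ and the other minimal wild $2$-generated algebras all have dimension $\ge 5$ and that a proper quotient would be strictly smaller, while $\tilde A$ itself is not isomorphic to $W$ because in $\tilde A$ one has $y^2\ne 0$ (equivalently $xy\ne yx$), whereas $W$ has $xy=yx$. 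Since a $5$-dimensional algebra admitting a surjection onto a $5$-dimensional $W$ would have to equal $W$, and $\tilde A\not\cong W$, no minimal wild quotient exists, so $A_4$ is tame.

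The main obstacle I expect is the relation-completeness step: showing that $x^2=-2xy$, $xy=yx+y^2$, $x^3=0$ generate \emph{all} relations of $A_4$, i.e. that the spanning set $\{1,x,y,yx,y^2\}$ of the abstract quotient is actually linearly independent and its image in $A_4$ is a basis. This requires pinning down $\dim A_4$ unambiguously (resolving the diagonal-counting bookkeeping above against the formula $n(n+1)/4$) and checking the quadratic identity $X^2=-2XY$ really holds for $n=4$ and not merely on a sub-diagonal; everything else is either immediate nilpotency or a direct appeal to Ringel's classification.
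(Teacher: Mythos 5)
Your plan (pin the image algebra to $\epsilon_4$, compute relations, invoke Ringel) matches the paper's, but the dimension bookkeeping and the final tameness step both have genuine errors.

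First, the dimension of $A_4$ is $6$, not $5$. Your diagonal count $1+2+2+1=6$ is correct; the figure $n(n+1)/4$ you cite from the proof of Lemma~\ref{tGfull-bl} is a typo for $n(n+2)/4$ (as the preceding computation $m(m+1)$ with $n=2m$ shows), and the theorem statement itself gives $n(n+2)/4=6$. Consistently with this, your proposed basis $\{1,x,y,yx,y^2\}$ of $\tilde A=k\langle x,y\mid x^2{+}2xy,\ xy{-}yx{-}y^2,\ x^3\rangle$ is short by one element: $y^3\neq 0$ in $A_4$ (indeed $J_4^3=E_{14}\neq 0$), and a short Gr\"obner computation from the two quadratic relations produces the derived rewriting rules $y^2x\to -2y^3$ and $y^4\to 0$ (and makes $x^3=0$ redundant), leaving the six normal words $1,x,y,yx,y^2,y^3$. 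So $\tilde A$ and $A_4$ are both $6$-dimensional, the surjection is an isomorphism, and the ``relations'' part of the claim can be salvaged along your lines — once the count is corrected.

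The tameness argument, however, does not survive the correction, and this is the real gap. Since $\dim A_4=6$ while the minimal wild local complete $2$-generated algebras in Ringel's list are $5$-dimensional, a surjection $A_4\twoheadrightarrow W_j$ need not be injective: it factors through a proper $5$-dimensional quotient $A_4/I$ for a $1$-dimensional ideal $I$. Your argument ``a $5$-dimensional algebra surjecting onto a $5$-dimensional $W$ must equal $W$'' is therefore inapplicable. The paper's proof addresses exactly this: it observes that $A_4$ has a unique $1$-dimensional two-sided ideal $I_4$ (spanned by the matrix $E_{14}$, i.e.\ by $y^3$), computes the quotient $\overline{A}_4=A_4/I_4=k\langle x,y\mid x^2{=}{-}2xy,\ xy{=}yx{+}y^2,\ x^3{=}0,\ y^3{=}0\rangle$, and then rules out an isomorphism $\overline{A}_4\cong W_j$ separately for each of the four minimal wild algebras $W_1,\dots,W_4$ by showing any purported isomorphism forces the Jacobian of the linear parts to be singular. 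You only compare against one algebra ($W_2$), and you compare $\tilde A$ rather than its $5$-dimensional quotient, so the non-isomorphism check as written does not cover the cases that actually need to be excluded.
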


\proof We intend to show that no one of the algebras from the
Ringel's list of minimal wild algebras can be obtained as a quotient
of $A_4$. After that using the Ringel's theorem, we will be able to
conclude that it is tame. Suppose that there exists an ideal $I$ of
$A_4$ such that $A_4/I$ is isomorphic to $W_j$ for some $j=1,2,3,4$,
where
\begin{align*}
W_1&=k\langle u,v|u^2,uv-\mu vu\ (\mu\neq0),v^2u,v^3\rangle,
\\
W_2&=k\langle u,v|u^2,uv,v^2u,v^3\rangle,
\\
W_3&=k\langle u,v|u^2,vu,uv^2,v^3\rangle,
\\
W_4&=k\langle u,v|u^2-v^2,vu\rangle.
\end{align*}

Since all $W_j$ are 5-dimensional and $A_4$ is 6-dimensional, the
ideal $I$ should be one-dimensional. Due to our knowledge on the
matrix structure of the algebra $A_4$, we can see that there is only
one one-dimensional ideal $I_4$ in $A_4$ and that $I_4$ consists of
the matrices with at most one non-zero entry being in the upper
right corner of the matrix:
$$
I_4=\left\{\left(
\begin{array}{cccc}
0&0&0&c\\0&0&0&0\\0&0&0&0\\0&0&0&0
\end{array}
\right)\right\}.
$$
After factorization by this ideal we get a $5$-dimensional algebra
given by relations
$$
\overline{A}_4=A_4/I=k\langle x,y|x^2=-2xy,
xy=yx+y^2,x^3=0,y^3=0\rangle.
$$
The question now is whether this algebra is isomorphic to one of the
algebras from the above list. Suppose that there exists an
isomorphism $\phi_j:W_j\to \overline{A}_4$ for some
$j\in\{1,2,3,4\}$. Denote $\phi_j(u)=f_j$ and $\phi_j(v)=g_j$.

First, let us mention that $f_j$ and $g_j$ have zero free terms:
$f_j^{(0)}=g_j^{(0)}=0$ because the equalities $\phi_j(u^2)=f_j^2=0$
and $\phi_j(v^3)=g_j^3$ imply $(f_j^{(0)})^2=(g_j^{(0)})^3=0$ and
therefore $f_j^{(0)}=g_j^{(0)}=0$ if $j=1,2,3$ and the equalities
$\phi_4(u^2-v^2)=f_4^2-g_4^2=0$ and $\phi_4(uv)=f_4g_4=0$ imply
$(f_4^{(0)})^2=(g_4^{(0)})^2$ and $f_4^{(0)}g_4^{(0)}=0$ and
therefore $f_4^{(0)}=g_4^{(0)}=0$.

The second observation is that the terms of degree 3 and more are
zero in $A_4$. Therefore we can present the polynomials $f_j$ and
$g_j$ as the sum of their linear and quadratic (on $x$ and $y$)
parts. So, let $f_j=f_j^{(1)}+f_j^{(2)}$ and
$g_j=g_j^{(1)}+g_j^{(2)}$, where
$$
f_j^{(1)}=ax+by,\ \ g_j^{(1)}=\alpha x+\beta y,\ \
f_j^{(2)}=cyx+dy^2,\ \ g_j^{(2)}=\gamma yx+\delta y^2.
$$
In order to get entire  linear part of the algebra $A_4$ in the
range of $\phi_j$ we need to have
\begin{equation}
{\rm det}\,\left|
\begin{array}{cc}
a&b\\\alpha&\beta
\end{array}
\right|\neq0. \label{det}
\end{equation}

For any $j=1,2,3,4$ we are going to obtain a contradiction of the
last condition with the equations on $a,b,\alpha,\beta$ coming from
the relations of the algebra $W_j$.

For instance, consider the case $j=2$. From
$0=u^2=f_j^2=(f_j^{(1)})^2=2(ab-a^2)yx+(b^2+ab-2a^2)y^2$ we get
$2a(b-a)=0$ and $b^2+ab-2a^2=0$. The first equation gives us that
either $a=0$ or $a=b$. In the case $a=0$ the second equation implies
$b=0$ and the equality $a=b=0$ already contradicts (\ref{det}).
Another solution is $a=b\neq 0$. From
$0=uv=f_jg_j=f^{(1)}g^{(1)}=(ax+by)(\alpha x+\beta y)$, substituting
$a=b$, we get $0=a(x+y)(\alpha x+\beta y)=a(\beta-\alpha)(yx+2y^2)$.
Hence $\beta=\alpha$, which together with the equality $a=b$
contradicts (\ref{det}).

In the other three cases one can  get a contradiction with
(\ref{det}) along the same lines, which completes the proof. $\Box$

Combining theorems \ref{an5} and \ref{an4} we get
\begin{theorem}\label{an}  The image algebra $A_n$ of a representation
$\rho_n \in {\cal U}_{(n)} $ is wild for any $n\geq 5$ and tame for $n\leq 4$.
\end{theorem}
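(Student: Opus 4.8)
The plan is to assemble the statement from the two ranges already isolated --- the wild range $n\geq 5$ and the small dimensions $n\leq 4$ --- and to check that together these cover every $n$. The common framework is Ringel's trichotomy for local complete algebras \cite{Ringloc}: such an algebra is of finite type, or tame, or else admits a quotient isomorphic to one of the finitely many minimal wild local complete algebras on his list. Each $A_n$ attached to a representation $\rho_n\in{\cal U}_{(n)}$ is local by Corollary \ref{cindlocal} and complete because ${\rm Jac}(A_n)$ is nilpotent (the generating matrices $X^0$ and $J_n$ are nilpotent), and by Lemma \ref{unim} there is a single such algebra for each $n$; so it suffices to place each $A_n$ in Ringel's scheme.

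First I would treat $n\geq 5$. Here Theorem \ref{an5} exhibits an explicit surjection of $A_n$ onto $W=k\langle x,y\mid y^2,\,yx-xy,\,x^2y,\,x^3\rangle$, entry c) of Ringel's minimal wild list; the mechanism is a codimension count for the ideal generated by $Y^2,X^2Y,X^3$, carried out by means of the diagonal description of $A_n$ from Lemma \ref{lImyx}. Since wildness is inherited by an algebra from any of its quotients, $A_n$ is wild for all $n\geq 5$.

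Next I would dispose of $n\leq 4$. For $n=1,2$ the algebra has dimension at most two and tameness is immediate. For $n=3$ one has $\dim A_3=4$, strictly below the dimension of every two-generated algebra in Ringel's minimal wild list, so $A_3$ can carry no such quotient and is tame by the trichotomy. The remaining case $n=4$ is the only delicate one and is handled in Theorem \ref{an4}: one identifies the unique one-dimensional ideal $I_4$ of $A_4$, passes to the five-dimensional quotient $\overline A_4=A_4/I_4$, and rules out $\overline A_4\cong W_j$ for each of the four minimal wild algebras $W_1,\dots,W_4$ by comparing linear parts --- surjectivity onto the linear span forces the $2\times 2$ determinant $\eqref{det}$ of the coefficients of the images of the two generators to be nonzero, which in each case is incompatible with the quadratic defining relations of $W_j$. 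Hence $A_4$ is tame, and the three sub-cases together give tameness for all $n\leq 4$.

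The only real obstacle sits inside the $n=4$ analysis of Theorem \ref{an4}: for each of the four candidate wild quotients one must work out the constraints that its relations put on the coefficients $a,b,\alpha,\beta$ of the linear parts of the images of the generators, and verify in every case that these constraints force the determinant $\eqref{det}$ to vanish. All remaining steps are elementary dimension bookkeeping together with a citation of Ringel's classification.
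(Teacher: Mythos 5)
Your proposal is correct and follows the same route as the paper: Theorem~\ref{an} is obtained there in one line by combining Theorems~\ref{an5} and~\ref{an4}, and you have simply unpacked those two cited results --- the quotient map onto Ringel's minimal wild algebra~$W$ for $n\geq 5$, the dimension counts ruling out wildness for $n\leq 3$, and the linear-part/determinant analysis that settles $n=4$. The only quibble is terminological: Ringel's Theorem~1.4 as quoted in the paper is a dichotomy (tame, or else a quotient from the minimal wild list), not a trichotomy with finite type as a separate branch, but this does not affect the argument.
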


\section{Irreducible components of the representation space}\label{IC}

We are still considering the variety $mod(R,n)$ of $R$-module structures
on $k^n$, with $k=\mathbb C$, as it was set in previous section.

In this section we show (in the basis of obtained above facts)
that the Jordan plane can serve as an example of an algebra for
which all irreducible components of the representation variety
$mod(R,n)$ could be described for any $n$. The importance of such
examples was emphasized in \cite{CBSch} and it is mentioned there that
known cases are restricted by algebras of finite representation type
(i.e., there are only finitely many isomorphism classes of
indecomposable $R$ modules) and one example of infinite representation
type in \cite{GPS}. There is some similarity between the algebra generated
by the pair of nilpotent matrices annihilating each other considered  in \cite{GPS} and the Jordan plane, but
while in the case of \cite{GPS} variables $x$ and $y$ act 'independently', there is
much more interaction in the case of the Jordan plane.

In previous section
we suggested stratification of the representation space $mod(R,n)$ related to
the  Jordan normal form of $Y$ and denote by
${\cal U}_{\cal P}$  the strata  corresponding to the partition ${\cal P}$ of $n$.
These strata are going to play a key role in the description of irreducible components.

We are going to show now that each irreducible component of the variety $mod(R,n)$ contains only one stratum and is the closure of this stratum.
The number of irreducible components is therefore  equal to the number of partitions of $n$.

\begin{theorem}\label{m}
Any irreducible component $K_j$ of the representation variety $mod(R,n)$ of Jordan plane contains only
one stratum $U_{\cal P}$ from the stratification related to the Jordan normal form of $Y$, and is the closure of this stratum.

The number of irreducible
components of the variety $mod(R,n)$ is equal to the number of the
partitions of $n$.

\end{theorem}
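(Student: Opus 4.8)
The goal is to prove that the irreducible components of $mod(R,n)$ are exactly the closures $\overline{{\cal U}_{\cal P}}$, one for each partition ${\cal P}$ of $n$. The strategy is the standard one for such stratification results: show (i) each stratum ${\cal U}_{\cal P}$ is irreducible, (ii) all strata have the same dimension $n^2$, (iii) the strata cover $mod(R,n)$, and (iv) no stratum is contained in the closure of another. Once these four facts are in place, the statement follows formally: an irreducible component, being a maximal irreducible closed subset, must meet some stratum in a dense subset, hence contain its closure $\overline{{\cal U}_{\cal P}}$; by (iii) and (iv) nothing larger than $\overline{{\cal U}_{\cal P}}$ can be irreducible and closed, so $\overline{{\cal U}_{\cal P}}$ is itself a component, and the components are in bijection with partitions.

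\textbf{Irreducibility and dimension of a stratum.} First I would fix a partition ${\cal P}$ and consider the map that sends a pair $(g,T)$, with $g\in GL_n(k)$ and $T$ ranging over the affine space of solutions $X$ of $XY-YX=Y^2$ for the fixed Jordan matrix $Y=J_{\cal P}$ (these are precisely the ${\cal J}$-Toeplitz matrices of Theorem \ref{tdm}), to the conjugate pair $(gXg^{-1}, gYg^{-1})$. Its image is exactly ${\cal U}_{\cal P}$. Since $GL_n(k)$ and the affine space of ${\cal J}$-Toeplitz $X$'s are both irreducible, ${\cal U}_{\cal P}$ is irreducible as the image of an irreducible variety under a morphism. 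For the dimension, I would use the vector-bundle description given in Section \ref{param}: over the base consisting of all matrices conjugate to $J_{\cal P}$ (an orbit of dimension $n^2 - m$, where $m=\dim C(Y)$ is the dimension of the space of ${\cal B}$-Toeplitz matrices by Proposition \ref{cstr}), the fiber over each $Y$ is the affine space of ${\cal J}$-Toeplitz $X$'s, of dimension $m$; hence $\dim {\cal U}_{\cal P} = (n^2-m) + m = n^2$, independent of ${\cal P}$. That the strata cover $mod(R,n)$ is immediate from Lemma \ref{l2.1}: any $Y$ occurring in a representation is nilpotent, hence conjugate to some $J_{\cal P}$, so $(X,Y)$ lies in the corresponding stratum.

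\textbf{The main obstacle: no stratum lies in another's closure.} The heart of the argument is step (iv), and this is where I would expect the real work. Concretely: if $\overline{{\cal U}_{\cal P}} \supseteq {\cal U}_{\cal Q}$ then $\overline{{\cal U}_{\cal P}} \supseteq \overline{{\cal U}_{\cal Q}}$; but both closures are irreducible of dimension $n^2$, so equality would force ${\cal P}={\cal Q}$ unless the closures genuinely coincide as sets, which I must rule out. Since each ${\cal U}_{\cal P}$ is a constructible set that is dense in its closure, $\overline{{\cal U}_{\cal P}}\cap {\cal U}_{\cal Q} \neq \emptyset$ with ${\cal P}\neq {\cal Q}$ would place a point of ${\cal U}_{\cal Q}$ in the boundary $\overline{{\cal U}_{\cal P}}\setminus {\cal U}_{\cal P}$, a set of dimension strictly less than $n^2$; yet that boundary, being $GL_n$-stable and closed, would then contain the whole $n^2$-dimensional set $\overline{{\cal U}_{\cal Q}}$, a contradiction. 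So the argument reduces to the dimension count plus the observation that a proper closed $GL_n$-stable subset of an $n^2$-dimensional irreducible set has dimension $< n^2$ — which is automatic — together with the fact that the ${\cal U}_{\cal P}$ are pairwise disjoint locally closed subsets each of the full dimension $n^2$.

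\textbf{Assembling the conclusion.} With (i)--(iv) established: each $\overline{{\cal U}_{\cal P}}$ is irreducible and closed; it is not properly contained in any other $\overline{{\cal U}_{\cal Q}}$ by the dimension equality and disjointness; and every irreducible component must contain some $\overline{{\cal U}_{\cal P}}$ because it meets some stratum in a set that is dense in that stratum (here one uses that a component is a maximal irreducible closed set and the strata are locally closed with union everything). Therefore the irreducible components are precisely the $\overline{{\cal U}_{\cal P}}$, and since partitions of $n$ index them bijectively, the number of components equals the number of partitions of $n$, which is the claim. The equidimensionality statement (each component of dimension $n^2$) is then just a restatement of the dimension count in step (ii).
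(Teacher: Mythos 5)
Your proof is correct in substance, but it takes a genuinely different route from the paper's. The paper's argument is less self-contained: it (a) shows each stratum $\U_{\PP}$ is a connected analytic submanifold and hence lies inside a single irreducible component, (b) computes $\dim\U_{\PP}=n^2$, (c) uses the dimension count to conclude no stratum lies in the intersection of two components, and then (d) invokes Theorems~1.1 and~1.2 of Crawley-Boevey--Schr\"oer \cite{CBSch} — building $\overline{C_1\oplus\dots\oplus C_m}$ from components of the smaller varieties $mod(R,n_j)$ and verifying $\ext(C_i,C_j)=0$ via the eigenvalue-splitting Lemma~\ref{lam} — to produce at least as many distinct components as partitions. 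Your argument replaces both the analytic-manifold lemma and the whole CB--Schr\"oer step by the observation that each $\U_{\PP}$ is algebraically irreducible (as the image of the irreducible variety $GL_n\times\{\text{${\cal J}$-Toeplitz }X\}$ under a morphism), that the strata cover $mod(R,n)$ (Lemma~\ref{l2.1}: $Y$ is always nilpotent), and that pairwise disjoint locally closed irreducible subsets of the same dimension $n^2$ force the closures $\overline{\U_{\PP}}$ to be precisely the components. This is cleaner and more elementary; what you give up is the explicit link to the module-theoretic machinery (direct-sum decompositions and Ext-vanishing) that the paper uses and that also yields Corollary~\ref{indec0} along the way.

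Two small slips, neither fatal. First, in the middle of your ``main obstacle'' paragraph you argue that $\overline{\U_{\PP}}\cap\U_{\cal Q}\neq\emptyset$ for ${\PP}\neq{\cal Q}$ leads to a contradiction because the boundary, being $GL_n$-stable, would contain all of $\overline{\U_{\cal Q}}$; that inference is false ($GL_n$-stability only gives you one orbit, not the whole stratum), and indeed the non-intersection claim itself is generally false --- a degenerate partition's stratum will meet the closure of a less degenerate one. What you actually need, and what you correctly state at the end of the paragraph, is the weaker fact that no $\U_{\cal Q}$ is \emph{entirely} contained in $\overline{\U_{\PP}}$: if it were, disjointness forces $\U_{\cal Q}\subset\overline{\U_{\PP}}\setminus\U_{\PP}$, an $n^2$-dimensional set inside a proper closed subset of an irreducible $n^2$-dimensional variety, which is impossible. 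Second, in the assembly you say every component must ``contain some $\overline{\U_{\PP}}$''; the correct direction is that an irreducible component $K$, being covered by the finitely many locally closed sets $K\cap\U_{\PP}$, must have one of them dense, hence $K\subseteq\overline{\U_{\PP}}$, and then equality by maximality. These are phrasing issues; the underlying argument is sound.
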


\begin{proof}
One can see that ${\cal U}_{\cal P}$ for any
${\cal P}$ is a connected analytic submanifold  inside the variety
$mod(R,n)$. Now we will use the fact that if a connected analytic
manifold $U \subset V$ is contained in the union of varieties $V_i$
($V= \cup V_i$), then it should be contained in one of them: $U
\subset V_j$ (see for example \cite{RB}, ch 1).
 Applying this to the decomposition of $V= mod(R,n)$ into an
irreducible components we get that each irreducible component
contains the whole stratum whenever the stratum touches the irreducible
component.

Now we shall show that the stratum $\U_\pp$ can not be contained in
an intersection of two different irreducible components $K_i$ and
$K_j$. In order to do this, let us calculate the dimension of
$\U_\pp$ (as a manifold). It turns out that it does not depend on
$\pp$ and always equals to the dimensions of $K_i$ and $K_j$.

\begin{lemma}\label{dimstr} For any partition $\pp$ of $n$, the
stratum $\U_\pp$ has  dimension $n^2$.
\end{lemma}

\begin{proof}
Note that the manifold $\U_\pp$ carries the natural vector bundle structure
with the base ${\cal B}_\pp$ being the space of matrices $Y$ with the Jordan normal form
$J_\pp$ and the fiber ${\cal F}_Y=\{X: \, XY-YX=Y^2\}.$
Dimension of the base is equal to  $n^2-\dim C(J_\pp)$, where $C(J_\pp)$ is a centralizer of $J_\pp$.
The fiber is a shift of the space of matrices commuting with $J_\pp$, so its dimension is equal to the dimension of $C(J_\pp)$.

Thus we obtain
$$
\dim \U_\pp=\dim C(J_\pp)+(n^2-\dim C(J_\pp))=n^2
$$
for any partition $\pp$.
$\Box$
\end{proof}

Suppose now that $\U_\pp\subset K_i\cap K_j$. Then
$$
n^2=\dim\U_\pp\leq \dim K_i\cap K_j\leq \dim K_j\leq n^2=\dim U
$$
for $j\in\{1,2\}$.  Hence $\dim K_i\cap K_j=\dim K_i=\dim K_j$,
which contradicts irreducibility of $K_j$.

Hence we have a picture,
where all our strata sitting inside irreducible components and are separated
by them.

This ensures at the same time, that the number of
irreducible components does not exceed the number of the partitions.

Due to the above, for any partition $n=n_1+{\dots}+n_m$, we can take the irreducible
component $C_j$ of the variety $mod(R,n_j)$, containing the
strata $\U_{(n_j)}$. Now from those $C_j$ for $1\leq j\leq m$ using
Theorem~1.2 in \cite{CBSch} we are going to construct an irreducible
component $K=\overline{C_1\oplus{\dots}\oplus C_m}$ of the variety
$ mod(R,n)$. Here $\overline{C_1\oplus{\dots}\oplus C_m}$ has
the following meaning. All the modules $M_1\oplus{\dots}\oplus M_m$,
$M_i\in C_i$ together with all the elements of their
$GL_n(\C)$-orbits gives us $C_1\oplus{\dots}\oplus C_m$ and the bar
stands for the Zariski closure. This gives us an irreducible
component provided $\ext(C_i,C_j)=0$ for $i\neq j$, where
$\ext(C_i,C_j)=\min\{\dim {\rm Ext}^1(M,N):M\in C_i,\ N\in C_j\}$. We can
verify the equality $\ext(C_i,C_j)=0$ in our case by taking $M=(X_1, Y_1)\in
C_i$ and $N=(X_2, Y_2)\in C_j$ in such a way that matrices
$X_1$  and $X_2$ have different eigenvalues. Then by
Lemma \ref{lam} an extension $E$ of $M$ by $N$ should be
decomposable, since it is proved  there that indecomposable modules
always have only one eigenvalue for $X$. Surely decomposition will
be on $M$ and $N$, since we can find a basis, where $X$ splits into
main eigenspaces corresponding to its two different eigenvalues. But
as Theorem \ref{pdecomp} says, these main
eigenspaces are also invariant for $Y$ and we get the splitting
$E=M\oplus N$. Thus ${\rm Ext}^1(M,N)=0$ and therefore $\ext(C_i,C_j)=0$
for $i\neq j$ in our situation and we can apply Theorem~1.2 from
\cite{CBSch}.

But the irreducible component
$C_\pp=\overline{C_1\oplus{\dots}\oplus C_m}$, ($C_j$ is an irreducible
component of the variety $mod(R,n_j)$, containing the
strata $\U_{(n_j)}$),  we have just got,
uniquely determines its summands $C_1,\dots,C_m$ according to
Theorem~1.1 from \cite{CBSch}. Thus the partition
$n=n_1+{\dots}+n_m$ corresponding to their dimensions is also
uniquely determined by $C_\pp$. Hence this component contains the strata
$\U_\pp$ with $\pp=(n_1,\dots,n_m)$ of $mod(R,n)$ and there
are at least as many different irreducible components of $
mod(R,n)$ as the number of partitions of $n$.

Taking into account also the above fact that strata are separated by
irreducible components, we
see that any irreducible component has inside exactly one strata and
is its Zariski closure. This completes the description of
irreducible components of $mod(R,n)$.
$\Box$
\end{proof}

After we have this theorem, we can denote irreducible components by
$K_\pp$, for any partition $\pp$ of $n$, where $K_\pp$ is the
closure of the corresponding strata $\U_\pp$.

We also can answer the question in which irreducible components
module in general position is indecomposable.

\begin{corollary}\label{indec0}
Only the irreducible component $K_{(n)}=\overline{\U_{(n)}}$ which
is the closure of the stratum corresponding to the trivial partition
of $n$ (the full block $Y$) contains an open dense subset consisting of
indecomposable modules.
\end{corollary}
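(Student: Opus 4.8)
The plan is to prove the two halves of the ``only'' separately: that the full-block component $K_{(n)}=\overline{\U_{(n)}}$ does contain an open dense subset of indecomposables, and that every other $K_\pp$ fails to.

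For $K_{(n)}$ I would simply take the stratum $\U_{(n)}$ itself and check that it is open and dense in $K_{(n)}$ and consists entirely of indecomposable modules. Density is the definition $K_{(n)}=\overline{\U_{(n)}}$. Openness in $mod(R,n)$ uses Lemma~\ref{l2.1}: since $Y$ is always nilpotent, having a single full Jordan block is equivalent to $Y^{n-1}\neq 0$, a Zariski-open condition, and intersecting with $K_{(n)}$ keeps it open there. Indecomposability: if $M\in\U_{(n)}$ splits as $M'\oplus M''$ of $R$-modules, then viewing $M$ as a module over $k[y]$ via the action of $y$ splits it as a $k[y]$-module; but that $k[y]$-module is $k[y]/(y^n)$ (one full Jordan block is a cyclic nilpotent module), which is indecomposable, so $M'=0$ or $M''=0$. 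Hence $\U_{(n)}\subseteq K_{(n)}$ is open, dense, and contains only indecomposables.

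For $\pp=(n_1,\dots,n_m)$ with $m\geq 2$ I would invoke the description of $K_\pp$ obtained in the proof of Theorem~\ref{m}: $K_\pp=\overline{C_1\oplus\dots\oplus C_m}$, where $C_j=K_{(n_j)}=\overline{\U_{(n_j)}}\subseteq mod(R,n_j)$ and the bar is the Zariski closure of the set of all $M_1\oplus\dots\oplus M_m$ with $M_i\in C_i$ together with their $GL_n(\C)$-orbits. By construction this set is dense in $K_\pp$; since $m\geq 2$ and every $n_i\geq1$, each of its points is a direct sum of at least two nonzero modules, hence decomposable. So the decomposable locus is dense in the irreducible variety $K_\pp$, and therefore every nonempty Zariski-open subset of $K_\pp$ meets it and contains a decomposable module. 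In particular $K_\pp$ has no open dense subset consisting only of indecomposables.

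The real content is the first step --- that the full-block stratum is open and made up entirely of indecomposables --- and the spot to be careful about is the openness of $\U_{(n)}$, which is exactly where nilpotency of $Y$ (Lemma~\ref{l2.1}) is needed: without it ``full Jordan block'' is not an open condition. The second step is then a soft consequence of the direct-sum description of the irreducible components from Theorem~\ref{m} and needs no further dimension count.
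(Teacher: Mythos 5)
Your proof is correct, and in fact more complete than the one the paper gives. The paper's proof only addresses the negative half: it shows that when $\pp$ is a non-trivial partition, the open locus in $K_\pp$ where $X$ has more than one eigenvalue is non-empty (take a direct sum of modules with distinct eigenvalues of $X$), hence dense by irreducibility, and then invokes Lemma~\ref{lam} to conclude these modules are all decomposable. It never verifies the positive half --- that $\U_{(n)}$ is open, dense, and consists of indecomposables --- whereas you check all three. Your openness argument via nilpotency and $Y^{n-1}\neq 0$ and your indecomposability argument via restricting to a $k[y]$-module are both correct and are exactly the right things to say.

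For the negative half your route is genuinely different. Rather than looking at the eigenvalue structure of $X$, you use the explicit description $K_\pp=\overline{C_1\oplus\dots\oplus C_m}$ that already appears inside the proof of Theorem~\ref{m}, observe that the direct-sum locus is dense there by construction and consists of decomposables, and conclude that no non-empty open subset can avoid decomposables. This is lighter: it does not need Lemma~\ref{lam} at this stage and is an almost purely formal consequence of how the components were built. The paper's argument buys slightly more --- it exhibits an \emph{open} dense subset of decomposables, not merely a dense subset --- but that extra strength is not needed for the corollary as stated, so your version is a clean alternative. One small point worth making explicit if you write this up: the set $C_1\oplus\dots\oplus C_m$ is taken to include all $GL_n(\C)$-translates of the block-diagonal direct sums, and each of those translates is still an internal direct sum of two nonzero submodules (the images of the two blocks), so ``decomposable'' is preserved across the whole orbit, not just at the block-diagonal representatives.
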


\begin{proof} Suppose our irreducible component $K$ is the closure of  a
stratum related to a non-trivial partition $n=n_1+{\dots}+n_m$,
$m>1$. Then there are modules in this component $M=(X,Y)$, where $X$
has different eigenvalues. Hence the Zariski closed set of modules
$M=(X,Y)$ in our component for which $X$ has only one eigenvalue, has
non-empty complement. The latter complement $\Omega$ is a non-empty
Zariski open subset of the irreducible variety $K$ and therefore
$\Omega$ is dense in $K$. Since by Lemma~\ref{lam} the modules
that correspond to $X$ having different eigenvalues are all
decomposable, we see that $M\in K$ in general position is
decomposable. $\Box$
\end{proof}

\section{NCCI and RCI }\label{NCCI}

In this section we are going to discuss properties of the noncommutative Koszul (Golod-Shafarevich) complex
and their implications for the structure  of the representation space. Namely, we consider situation when the Golod-Shafarevich complex provides a DG resolution of the algebra. We show that the Jordan plane is an example illustrating the fact that RCI (representational complete intersections), introduced by Ginsburg and Etingof in \cite{GE} always have this property of the Golod-Shafarevich complex.

Let us present first a construction, appeared in \cite{GS}, of the complex associated to an algebra, presented by generators and relations. This complex could be considered as an analogue of the Koszul complex in commutative algebra. Let $A=k\langle x_1,...,x_d \rangle / \{ f_1,...,f_m \}$. Denote the free algebra $k\langle x_1,...,x_d \rangle $ by $R_d$, and an ideal in $R_d$ generated by polynomials
$ f_1,...,f_m$ by $I$. We suppose that  $f_1,...,f_m$ are homogeneous, hence $A$ inherits the grading by the degree on variables  $ x_1,...,x_d$ form the free algebra $R_d$.

Let us denote now by $M=k\langle x_1,...,x_d, u_1,...,u_m \rangle $, where variables $u_i$ are in one to one correspondence with polynomials $ f_1,...,f_m$. Linear basis of $M$ over $k$ consists obviously of
monomials
$$ a_0u_{i_1}a_1u_{i_2}...a_{k-1}u_{i_k}a_k,$$
where $a_i$ are monomials from $R_d$. {\it Homological degree} of such a monomial is its degree on variables $u_i$. Obviously, if $M^{(k)}$ is a linear combination of monomials of homological degree $k$, $M$ becoming a graded algebra: $M= \sum\limits_{k=0}^{\infty} M^{(k)}$.

We can define the following differential on it.
Let $\delta (x_i)=0, \delta (u_i)=f_i$, and it is extended as a graded derivation, i.e. by the rule:

$$\delta(vw)=\delta(v)w+(-1)^{{\rm deg} v}  v \delta w.$$

Defined in such a way differential have the following formula:

$$\delta ( a_0u_{i_1}a_1u_{i_2}...a_{k-1}u_{i_k}a_k) = \sum\limits_{l=1}^k (-1)^l a_0u_{i_1}...a_{l-1}f_{i_l}a_{l}...u_{i_k}a_k.$$

It is easy to see that  $\delta ^2=0$, and $\delta$ drops the homological degree by one, so we have a complex of $R_d$-modules

$$ 0 \strl{\delta} M^{(0)} \strl{\delta} M^{(1)} \strl{\delta} M^{(2)} \strl{\delta} ... $$

It is clear that $H_0(M) = A$, so we can rewrite it as an $A$ - resolution, which is in general not acyclic:

$$ 0 \leftarrow A \leftarrow M^{(0)} \strl{\delta} M^{(1)} \strl{\delta} M^{(2)} \strl{\delta} ... $$

it is exact in terms $A$ and $M^{(0)}$, but  not in the other terms.

This complex we denote by $Sh_{\bullet}(A)$.

\begin{definition} In case the complex $Sh_{\bullet}(A)$ is a DG-algebra resolution of  $A$, that is all
$H^{(i)}(M)=0$, for $i \geq 1$, we say that $A$ is a noncommutative complete intersection (NCCI).
\end{definition}

Now we give two other equivalent definitions of NCCI \cite{GS}.

\begin{theorem}
The following are equivalent:

\begin{itemize}

\item[(i)]The noncommutative Koszul complex $Sh_{\bullet} A$ is
a DG algebra resolution of A, that is

$H_0(Sh_{\bullet} A, \delta)= A$, $ H_n(Sh_{\bullet} A, \delta)= 0$, $n > 0$.

\item[(ii)] The Hilbert series of $A$  coincides with the Golod-Shafarevich series:

$H_A= (1-nt+dt^2)^{-1}$.

 \item[(iii)] For any right $A$-module $M$,

 $\rm{Ext}^i_A (A/A_+ , M)=0$, for $i \geq 3,
({\rm gl.dim} A \leq 2).$

\end{itemize}

\end{theorem}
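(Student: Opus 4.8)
\proof The plan is to establish $(i)\Rightarrow(ii)$, then $(i)\Leftrightarrow(iii)$, and finally $(ii)\Rightarrow(i)$, the last of which carries the essential content and rests on the Golod--Shafarevich estimate of \cite{GS}; together these give the equivalence of all three conditions. Begin with $(i)\Rightarrow(ii)$. Fix the homogeneous presentation $A=k\langle x_1,\dots,x_n\rangle/(f_1,\dots,f_d)$ with $\deg f_j=2$, and write $R(t)=(1-nt)^{-1}$ for the Hilbert series of the free algebra. A basis monomial $a_0u_{i_1}a_1\cdots u_{i_k}a_k$ of $M^{(k)}$ is specified by $k+1$ free monomials together with $k$ symbols $u_{i_j}$ of degree $2$, so $H_{M^{(k)}}(t)=R(t)^{k+1}(dt^2)^k=R(t)\bigl(dt^2R(t)\bigr)^k$. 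Since $M^{(k)}$ is concentrated in degrees $\geq 2k$, in each internal degree $p$ the complex $\cdots\to M^{(1)}_p\to M^{(0)}_p\to A_p\to 0$ is a finite exact sequence of finite-dimensional spaces, so $\dim A_p=\sum_k(-1)^k\dim M^{(k)}_p$; summing over $p$ and using the geometric series,
$$
H_A(t)=\sum_{k\geq 0}(-1)^kH_{M^{(k)}}(t)=\frac{R(t)}{1+dt^2R(t)}=\frac{1}{1-nt+dt^2},
$$
which is $(ii)$.

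For $(i)\Leftrightarrow(iii)$ the device is to transport $Sh_\bullet(A)$ to the trivial module. The standard reduction of $Sh_\bullet(A)$ to a complex of free left $A$-modules (killing $A_+$ on one side) produces a complex beginning $\cdots\to A\otimes\langle f_j\rangle\to A\otimes\langle x_i\rangle\to A\to k\to 0$, which is a resolution of ${}_Ak$ precisely when $Sh_\bullet(A)$ is acyclic in positive homological degree. If $(i)$ holds, this resolution is the one just displayed, of length $\leq 2$, so ${\rm pd}_A k\leq 2$, hence ${\rm Ext}^i_A(A/A_+,M)=0$ for all $i\geq 3$ and ${\rm gl.dim}\,A\leq 2$, i.e. $(iii)$. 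Conversely, if ${\rm gl.dim}\,A\leq 2$ then, for a presentation with generators in degree $1$ and no redundant quadratic relation, the minimal graded free resolution of ${}_Ak$ is forced to be $0\to A\otimes\langle f_j\rangle\to A\otimes\langle x_i\rangle\to A\to k\to 0$; identifying it term by term with the reduced complex and lifting the resulting exactness back through the $A_+$-adic filtration shows $Sh_\bullet(A)$ is acyclic in positive homological degree, i.e. $(i)$.

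The implication $(ii)\Rightarrow(i)$ is the delicate one, and I expect it to be the main obstacle. Telescoping the short exact sequences $0\to Z_k\to M^{(k)}\to B_{k-1}\to 0$ and $0\to B_k\to Z_k\to H_k\to 0$ (with $Z_k=\ker\delta_k$, $B_k={\rm Im}\,\delta_{k+1}$, $H_k=H_k(Sh_\bullet(A))$) only yields the Euler identity $\sum_k(-1)^kH_{H_k}(t)=\sum_k(-1)^kH_{M^{(k)}}(t)=(1-nt+dt^2)^{-1}$, so $(ii)$, i.e. $H_A=H_{H_0}(t)=(1-nt+dt^2)^{-1}$, forces only $\sum_{k\geq 1}(-1)^{k-1}H_{H_k}(t)=0$ — which does not by itself kill every $H_k$, because of sign cancellation between consecutive terms. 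What closes the gap is the Golod--Shafarevich argument of \cite{GS}: from the same short exact sequences, discarding the non-negative homology contributions, one obtains the coefficientwise bound $H_A(t)\succeq(1-nt+dt^2)^{-1}$ whenever the right-hand side has non-negative coefficients, and — this is the crux — equality in this bound holds exactly when $H_k(Sh_\bullet(A))=0$ for all $k\geq 1$. Since $(ii)$ places us in the equality case, $(i)$ follows and the cycle closes. The first two steps are bookkeeping and standard homological algebra; it is the extraction of the vanishing of all $H_k$ from the sharpness of the Golod--Shafarevich bound, as carried out in \cite{GS}, that I would treat with the most care.$\Box$
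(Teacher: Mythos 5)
The paper does not prove this theorem: it is stated as a known result with a citation to \cite{GS}, and the formulation via conditions (i)--(iii), together with the NCCI terminology, is that of \cite{GE}. There is therefore no argument of the paper's to set your proposal against; I can only assess the proposal on its own terms.

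Your overall plan $(i)\Rightarrow(ii)$, $(i)\Leftrightarrow(iii)$, $(ii)\Rightarrow(i)$ is the standard one, and the Euler-characteristic computation for $(i)\Rightarrow(ii)$ is correct. Two genuine gaps remain. In $(i)\Leftrightarrow(iii)$, ``killing $A_+$ on one side'' is not a well-defined operation on $Sh_\bullet(A)$: this complex is built from free modules over the \emph{free} algebra $R_d$, not over $A$, and there is no base change that turns it directly into the free $A$-module complex $\cdots\to A\otimes\langle f_j\rangle\to A\otimes\langle x_i\rangle\to A\to k\to 0$. The content of this equivalence is the identification of $H_{\geq 1}(Sh_\bullet(A))$ with $\mathrm{Tor}^A_{\geq 3}(k,k)$, which needs to be carried out, not asserted. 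In $(ii)\Rightarrow(i)$ you rightly note that the identity $\sum_{k\geq 1}(-1)^{k-1}H_{H_k}=0$ does not on its own kill each $H_k$; but your claim that the Golod--Shafarevich bound $H_A\succeq(1-nt+dt^2)^{-1}$ follows ``from the same short exact sequences, discarding the non-negative homology contributions'' is not right. Discarding $H_1$ from the boundary/cycle sequences only gives $H_A\geq H_{M^{(0)}}-H_{M^{(1)}}$, which is strictly weaker. The Golod--Shafarevich bound is obtained from a different, degree-by-degree recursion on the defining ideal, $\dim I_p\leq d\cdot\dim A_{p-2}+n\cdot\dim I_{p-1}$, and it is the equality case of that recursion, analysed coefficient by coefficient, that forces $H_k=0$ for all $k\geq 1$. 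This is the heart of the theorem and cannot be left as a citation inside a proof of the theorem itself.
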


In particular, from the condition (iii) one can see, that  NCCI implies Koszulity:
put $M=k_A$.

This provides us with another way to ensure Koszulity of the Jordan plane $R$. As we noticed in Lemma \ref{14.1}, the defining relations of $R$  form a Gr\"obner basis. Using this it is easy to calculate
the Hilbert series of $R$: $H_R=(1-t)^{-2}$, so by (ii) it is a NCCI, hence Koszul.

\vspace{3mm}

Now we turn to the properties of representation spaces, which should be a reflection of NCCI.
The notion of {\it representational complete intersection } (RCI) was introduced by Ginzburg and Etingof \cite{GE}

\begin{definition} An algebra is an RCI if for infinitely many $n$ the spaces of $n$-dimensional representations of that algebra are a (commutative) complete intersections.

\end{definition}

The main source of examples of RCI is provided by the preprojective algebras of finite quivers.
It was also shown in the above paper that RCI implies NCCI.
Answering the question of Etingof {\cite{E}} we provide here one more example of RCI (which is not a preprojective algebra).

\begin{theorem} The Jordan plane $R=k\langle x,y | xy-yx=y^2 \rangle$ is a RCI.

\end{theorem}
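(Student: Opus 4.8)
The plan is to show directly that for infinitely many $n$ the variety $mod(R,n)$, cut out in $M_n(k)^2$ by the $n^2$ matrix equations encoding $XY-YX=Y^2$, is a complete intersection in the scheme-theoretic sense: that is, its coordinate ring has Krull dimension equal to $2n^2 - n^2 = n^2$, and it is defined by exactly $n^2$ equations. The ambient space $M_n(k)^2$ has dimension $2n^2$; the relation $XY-YX-Y^2=0$ is a single matrix equation, hence $n^2$ scalar equations. So the only thing to check is that the dimension of $mod(R,n)$ equals $2n^2 - n^2 = n^2$, and that the ideal generated by these $n^2$ entries is (for suitable $n$) a radical/prime-enough ideal so that ``complete intersection'' holds in the algebro-geometric sense Ginzburg--Etingof intend.

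First I would invoke Theorem~\ref{m} together with Lemma~\ref{dimstr}: every irreducible component $K_\pp$ of $mod(R,n)$ is the closure of a stratum $\U_\pp$, and each such stratum has dimension exactly $n^2$. Hence $mod(R,n)$ is equidimensional of dimension $n^2$ for \emph{every} $n$, which is precisely the numerical condition $\dim = 2n^2 - (\text{number of defining equations})$ required for a set-theoretic complete intersection. Next I would address the scheme structure: one must rule out that the defining ideal has embedded components or that the scheme is non-reduced in a way that breaks the complete-intersection property. The cleanest route is to exhibit, for infinitely many $n$ (e.g.\ those $n$ for which the generic stratum behaves well, or perhaps simply all $n$), that the Jacobian of the map $(X,Y)\mapsto XY-YX-Y^2$ has the expected rank $n^2$ at a generic (smooth) point of each component — this is where the explicit description of $\U_\pp$ as a vector bundle (base $=$ matrices $Y$ of fixed Jordan type, fiber $=$ affine space of solutions $X$) makes the tangent space computation tractable. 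Concretely, at a point of the open dense stratum the fiber dimension is $\dim C(J_\pp)$ and the base is smooth of dimension $n^2 - \dim C(J_\pp)$, so the tangent space to $mod(R,n)$ there has dimension exactly $n^2$, matching the dimension of the component; this forces the scheme to be a local complete intersection at that point, and since it is equidimensional of the right dimension with no superfluous generators, it is a (global, set-theoretic) complete intersection.

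Finally I would tie this to the NCCI statement and the Ginzburg--Etingof framework: we have already established (via the Gr\"obner basis, Priddy, and the Golod--Shafarevich Hilbert-series criterion (ii)) that $R$ is Koszul with $H_R = (1-t)^{-2}$, hence $R$ is NCCI; and their general theorem says RCI $\Rightarrow$ NCCI, so the NCCI property is a necessary consistency check rather than an input. The actual content is the dimension count above, which we now have for all $n$. Therefore $mod(R,n)$ is a complete intersection for infinitely many (indeed all) $n$, and $R$ is an RCI.

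\medskip

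The step I expect to be the genuine obstacle is not the dimension count — that is handed to us by Theorem~\ref{m} and Lemma~\ref{dimstr} — but rather pinning down exactly which scheme-theoretic notion of ``complete intersection'' is being used and verifying it: in particular checking that the ideal of $n^2$ entries of $XY-YX-Y^2$ generates a radical ideal (or at least that its zero scheme is reduced on a dense open set of each component), so that the honest algebro-geometric definition applies and not merely the crude dimension equality. The vector bundle structure on each stratum, plus the fact that the base (fixed Jordan type locus) is smooth, should give smoothness of $mod(R,n)$ on a dense open subset of each component, which is enough; but carrying this out carefully — identifying the smooth locus and confirming the tangent space has the right dimension there — is the one place where real work is needed.
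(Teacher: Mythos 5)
Your proof and the paper's use the same key inputs (Theorem~\ref{m} and Lemma~\ref{dimstr}: $mod(R,n)$ is equidimensional of dimension $n^2$ inside the ambient $2n^2$-dimensional space of pairs of matrices, cut out by the $n^2$ scalar entries of $XY-YX-Y^2$), and the core numerical count is identical to the one the paper runs. Where you diverge is in what you think remains to be checked. The paper simply declares the dimension count to be the end of the proof, and it is entitled to: the ambient ring $k[M_n^2]$ is a polynomial ring, hence Cohen--Macaulay, and in a Cohen--Macaulay ring an ideal generated by $r$ elements whose zero set has codimension $r$ is automatically generated by a regular sequence (Macaulay's unmixedness theorem). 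That is exactly the ring-theoretic definition of a complete intersection, and it does \emph{not} require the ideal to be radical or the scheme to be reduced. So the item you flag as ``the one place where real work is needed'' --- verifying radicality, or identifying a smooth open locus via a Jacobian computation --- is not in fact required by the Etingof--Ginzburg notion of RCI, and the paper never addresses it because it does not need to. Your proposed route via tangent-space/Jacobian computations on the open stratum would also close the argument (and would in addition show generic reducedness, which is more than needed), so it is not wrong, merely heavier machinery than necessary. The net assessment: same approach, correct conclusion, but you misidentify the missing step --- there is none, once Cohen--Macaulay-ness of the ambient polynomial ring is invoked.
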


\begin{proof} We have proved in Lemma\ref{dimstr} and Theorem\ref{m} that the representation variety $mod(R,n)$ of the Jordan plane $R$ is an equidimensional variety with each irreducible component having the dimension $n^2$. This means that the dimension of the variety is equal to the dimension of the whole space of pairs of matrices, $2n^2$, minus the dimension of the space of defining relations, which is $n^2$, so all those varieties are complete intersections in the classical (commutative) sense.

\end{proof}

\section{Acknowledgments}

 The author would like to acknowledge the support and hospitality of the International Erwin Schr\"odinger Institute in Vienna
where the results were presented during the special  program on "Golod-Shafarevich groups and algebras and rank gradient".

The work on this circle of questions was started
during my visit at the
Max-Planck-Intitut f\"ur Mathematik in Bonn. The content of the first 9 chapters appeared as an MPI preprint \cite{MP}. I am thankful to this
institution for support and hospitality and  to
many colleagues with whom ideas related to
this paper have been discussed.

\vspace{8mm}

\end{document}